\documentclass[a4,11pt]{amsart}

\usepackage[usenames]{color}
\usepackage{amsfonts}
\usepackage{amssymb}
\usepackage[normalem]{ulem}

\usepackage{soul}
\usepackage{amsmath,amssymb,amsthm,amsfonts,enumerate,url,mathrsfs,tikz,graphicx}
\usepackage{hyperref}

\newcommand\cev[1]{\overleftarrow{#1}}



\usetikzlibrary{automata,positioning}
	

\usepackage[utf8]{inputenc}

\usepackage{verbatim}
\usepackage{array}

\usepackage{bbm,dsfont}
\usepackage[all]{xy}

\newtheorem{lemma}{Lemma}[section]
\newtheorem{corollary}[lemma]{Corollary}
\newtheorem{definition}[lemma]{Definition}
\newtheorem{theorem}[lemma]{Theorem}
\newtheorem{remark}[lemma]{Remark}
\newtheorem{proposition}[lemma]{Proposition}
\newtheorem{ex}[lemma]{Example}

\numberwithin{equation}{section}
\numberwithin{lemma}{section}

 \def\mG{\mathsf{G}}

 \def\mV{\mathsf{V}}
 \def\mE{\mathsf{E}}

 \def\mv{\mathsf{v}}
 \def\me{\mathsf{e}}
 \def\mw{\mathsf{w}}

 \def\mf{\mathsf{f}}

\newcommand{\Z}{\mathbb{Z}}

\newcommand{\R}{\mathbb{R}}
\newcommand{\C}{\mathbb{C}}
\newcommand{\N}{\mathbb{N}}

\newcommand{\A}{{A}}
\newcommand{\U}{\ \mathcal{U}}

\newcommand{\V}{\mathcal{V}}

\renewcommand{\L}{\mathcal{L}}
\newcommand{\T}{\mathcal{T}}

\renewcommand{\d}{\mathrm{d}}
\renewcommand{\Re}{\operatorname{Re}}
\renewcommand{\Im}{\operatorname{Im}}
\newcommand{\fra}{\mathfrak{a}}
\newcommand{\frb}{\mathfrak{b}}

\renewcommand{\mid}{\, \vert \,}
\newcommand{\MaxR}{\textit{MR}\,}

\DeclareMathOperator{\essinf}{ess\;inf}
\DeclareMathOperator{\id}{id}
\DeclareMathOperator{\sgn}{sgn}

\DeclareMathOperator{\diag}{diag}
\DeclareMathOperator{\Id}{Id}
\DeclareMathOperator{\Ker}{Ker}

\DeclareMathOperator{\Formm}{\textsc{Form}}

\newcommand\restr[2]{{#1}{_{|#2}}}
%

\newcommand\HL[1]{{\color{blue} HL: #1}}

\textwidth=37cc
\oddsidemargin.2mm
\evensidemargin.2mm

	\title[Ultracontractivity and Gaussian bounds for evolution families]{Ultracontractivity and Gaussian bounds for evolution families associated with non-autonomous forms}

\subjclass[2010]{47D06, 	47A07, 35K90, 35K08, 47D99}

\keywords{Evolution families, non-autonomous parabolic problems, kernel estimates}

\thanks{The first author was supported by the Deutsche Forschungsgemeinschaft (Grant LA 4197/1-1). The second author was partially supported by the Deutsche Forschungsgemeinschaft (Grant 397230547).}

\author{Hafida Laasri}
\address{Hafida Laasri, Arbeitsgruppe Funktionalanalysis, Fakult\"at Mathematik und Informatik, Universit\"at Wuppertal, 42119 Wuppertal, Germany}
\email{laasri@uni-wuppertal.de}

\author{Delio Mugnolo}
\address{Delio Mugnolo, Lehrgebiet Analysis, Fakult\"at Mathematik und Informatik, Fern\-Universit\"at in Hagen, 58084 Hagen, Germany}
\email{delio.mugnolo@fernuni-hagen.de}
	
\begin{document}
	
\begin{abstract}

We develop a variational approach in order to study qualitative properties of non-autonomous parabolic equations. Based on the method of product integrals, we discuss long-time behavior, invariance properties, and ultracontractivity of evolution families in Hilbert space. Our main results give sufficient conditions for the heat kernel of the evolution family to satisfy Gaussian-type bounds. Along the way, we study  examples of non-autonomous equations on graphs, metric graphs, and domains.

\end{abstract}

\maketitle

\section{Introduction}\label{s1}

Non-autonomous evolution equations are partial differential equations in which relevant coefficients of the differential operator and/or in the boundary conditions are time-dependent, thus allowing for underlying models that are variable over time.

In the autonomous case (i.e., evolution equations with time-independent coefficients), well-posed\-ness is equivalent to generation of a semigroup in a suitable Banach space; in comparison, the theory that describes well-posedness of non-autonomous problems on general Banach spaces is more rudimentary. If the coefficients of a non-autonomous equation are piecewise constant, then one may find a solution by following the orbit of the semigroup governing a given problem as long as the coefficients stay constant; then ``freeze'' the system; use the final state as an initial condition for a new evolution equation with new (constant) coefficients, and so on: this boils down to consider the composition of a finite numbers of semigroups.

A theory originally developed by J.-L.\ Lions shows that well-posedness in Hilbert space can be proved under much weaker assumptions, most notably mere measurability of the time dependence, provided the problem has a nice variational structure: this is typically the case if the differential equation is parabolic at any given time. By adapting the setting of (time-independent) bounded elliptic forms it is thus possible to show that the equation has a solution that is, in particular, continuous in time. 
This motivates the study of \textit{non-autonomous forms}, a topic which has received much attention in the last decade: we mention among others~\cite{AreDieLaa14,AreDie18,AreDieFac17,Ouh15,Fac17}. All these articles are chiefly devoted to study properties of solutions of partial differential equations, with a focus on maximal regularity issues and hence allowing for inhomogeneous terms.

Our main aim in this paper is to develop an abstract theory with a more operator-theoretical flavor. Indeed, Lions' result paves the way to the possibility of defining an \textit{evolution family} (or \textit{evolution system}, or \textit{propagator}),
 i.e., a family of operators $U(\cdot,s)$ mapping each initial data
\[
u(s)=x\in H
\]
to the orbit of the solution 
\[
\dot{u}(t)+\A(t)u(t)=0 \quad \hbox{a.e. on}\ [s,T]\ .
\]
Because the initial condition may well be imposed at instants $s\ne 0$, this actually define a two-parameter family 
\[
\mathcal U:=(U(t,s))_{(t,s)\in \overline{\Delta}}
\]
 of bounded linear operators on $H$ by $U(t,s)x:=u(t)$, where  $
  \Delta:=\{(t,s)\in (0,T)^2:s< t\}.$
Some good compendia on such evolution families are~\cite[Chapt.~7]{Tan79}, \cite[Chapt.~5]{Paz83},~\cite[Chapt.~7]{Fat83},~\cite[Section~VI.9]{EngNag00}, or the monograph \cite{ChicLat99}.

 The tumultuous development of Hilbert space methods, and especially the theory of Dirichlet forms, have been fruitful also in the non-autonomous environment: a theory of non-autonomous Dirichlet forms has been recently introduced in~\cite{AreDieOuh14}. If $A(t)\equiv A$, the above abstract Cauchy problem is autonomous and its solution is simply given by
\[
u(t)=U(t,s)x:=e^{-(t-s)A}x\ ;
\]
hence the findings in~\cite{AreDieOuh14} can be regarded as a strict generalization of the classical theory of Markovian operators and Dirichlet forms represented e.g.\ in~\cite{FukOshTak10}.
Our goal is to complement these results, thus setting up a non-autonomous variational program analogous to the autonomous one outlined in classical monographs like~\cite{Ouh05}: among other things we study  extrapolation to $L^p$-spaces, ultracontractivity, or Gaussian-type bounds on integral kernels of evolution families. 

It should be mentioned that ultracontractivity and kernel estimates have been observed already in~\cite{Aro68,Dan00} for specific instances of parabolic non-autonomous equations; in particular, Aronson observed in~\cite{Aro67} that the fundamental solution $(t,s;x,y)\mapsto \Gamma(t,s;x,y)$ of a certain class of non-autonomous diffusion equations in (domains of) $\R^d$ satisfies
\begin{equation}\label{eq:nonaut-gauss}
\Gamma(t,s;x,y)\le K\ G(t-s;x-y)
\end{equation}
where $(t,x)\mapsto G(t,x)$ is the Gaussian kernel that yields the fundamental solution of the (autonomous) heat equation on $\R^d$. Analogous Gaussian bounds have ever since been proved for integral kernels of semigroups generated by large classes of second-order elliptic operators, possibly with complex coefficients~\cite{Ouh04}; 
in the non-autonomous case, Aronson's original findings have been extended to operators on domains 
in~\cite{Aro68,Dan00}.

In this paper we are going to introduce a general approach, based on the so-called Davies' Trick, to prove Gaussian bounds for heat kernels of evolution families that govern non-autonomous parabolic equations. Inspired by some techniques introduced in~\cite{Dan00,Ouh04}, we show the applicability of our methods by showing that a large class of elliptic operators with complex-valued, bounded measurable coefficients are associated with evolution families that satisfy Gaussian bounds, thus extending the main results in~\cite{Ouh04} to the non-autonomous setting.

Our approach will heavily rely upon the method of \textit{product integrals}, whose historical evolution is thoroughly discussed in \cite{Sla07},~ \cite[\S~7.10]{Fat83} and whose scope has been extended to non-autonomous forms with measurable dependence on time in~\cite{ElmLaa16, SanLaa15}. We adapt it to our present setting, thus deriving in Theorem~\ref{convergence forte de solution approchee} a version that we will use over and over again in different contexts throughout this paper. The method of product integrals proves especially efficient when it comes to discuss long-time behavior of evolution families. While there is already a wide literature devoted to this topic, see e.g. the survey in~\cite{Sch04}, our setting allows us to provide conditions that are very easy to check in many concrete cases.   While all strongly continuous semigroups are exponentially bounded, this is not the case for general evolution families, cf.~\cite[\S~VI.9]{EngNag00}: unlike in the general case, though, evolution families associated with non-autonomous forms are always exponentially bounded and we give sufficient criteria for exponential stability and for uniform convergence towards equilibrium; in~\cite[\S~5]{AreDieKra14}, only strong convergence was studied.


The present paper is organized as follows. After describing our mathematical framework in Section~\ref{s2}, in Section~\ref{sec:lattice} we present sufficient conditions that enforce qualitative properties based on the lattice structure of $L^2$-spaces and, using Perron--Frobenius-type arguments, we discuss long-time behavior of evolution families.




Gaussian-type bounds are shown to depend on ultracontractivity properties of certain operator families related to $\mathcal U$. This approach requires, in turn, suitable common bounds in $L^p$-norm, uniformly on all compact subsets of $\Delta$. Inspired by similar criteria in the autonomous setting we show that efficient conditions based on Sobolev-type inequalities can enforce such bounds.
In Section~\ref{sec:ultra} we develop a theory of ultracontractive evolution families: a technical difficulty we face is related to the failure of self-adjointness of evolution families, a phenomenon that typically occurs even when all operators $A(t)$ are self-adjoint.
We take over an idea from~\cite{Dan00} and circumvent this problem by studying some non-autonomous form associated with a tightly related backward evolution equation.

It has been known since~\cite{Dav87} that ultracontractivity is an important, but not yet sufficient ingredient to prove Gaussian bounds for semigroups. In Section~\ref{sec:gaussian} we present different conditions that imply  Gaussian bounds for evolution families.
In particular, our approach allows us to show Gaussian bounds for the evolution family associated with a large class of elliptic operators, thus generalizing the pioneering results in~\cite{Aro68,Dan00}.

Several applications are reviewed in Section~\ref{sec:appl}: we discuss well-posedness and qualitative properties of dynamical systems on undirected graphs tightly related to the theory of \emph{dynamic (positive) graphs} discussed in~\cite{Sil08} as well as models of Black--Scholes-types equations with time-dependent volatility~\cite{Hes93}; we extend the kernel estimates in~\cite{Mug07} to more general non-autonomous diffusion equations on possibly infinite networks; and finally, we prove Gaussian bounds for the heat kernel for a large class of elliptic operators with time-dependent, possibly complex coefficients, thus deducing the main results in~\cite{Dan00,Ouh04} as special cases.

\section{Evolution families: Notations and preliminary results\label{s2}}
Throughout this paper $H$ is a separable, complex Hilbert space and $V$ is a further complex Hilbert space that is densely and continuously embedded into $H$. Let $V'$ denote the antidual of $V$ with respect to the pivot space $H$; the duality between $V'$ and $V$ is denoted by $\langle ., . \rangle$. 
We also denote by $(\cdot \mid \cdot)_V$ and $\|\cdot\|_V$ the scalar product and the norm
on $V$, respectively; and by $(\cdot \mid \cdot)$ and $\|\cdot\|$ the corresponding quantities in $H$.

We fix $T\in ]0,\infty[$ and consider a time-dependent family $(a(t))_{t\in [0,T]}$ of mappings such that $a(t;\cdot,\cdot): V\times V \to \C$ is for all $t\in [0,T]$ a sesquilinear form and 
\begin{align}
&\label{measurability} [0,T]\ni t\mapsto a(t;u,v)\in \mathbb C \hbox{ is measurable}\qquad \hbox{for all }u,v\in V;
\end{align}
and such that furthermore there exist constants $M, \alpha> 0$ and $\omega\geq 0$ such that the boundedness and $H$-ellipticity estimates
\begin{align}
\label{boundedness}|a(t;u,v)|\leq M\|u\|_V\|v\|_V\quad &\hbox{for a.e }t\in[0,T]\hbox{ and }u,v\in V,\\ 
\label{ellipticity}\Re a(t;u,u)+\omega\|u\|^2_{H}\geq\alpha\|u\|_V^2\quad &\hbox{for a.e }t\in[0,T]\hbox{ and }u\in V,
\end{align}
hold.
In what follows we call such a family $\fra:=(a(t))_{t\in [0,T]}$ \textit{ bounded $H$-elliptic non-autonomous form}: 
following~\cite{AreDie18} we denote by $\Formm([0,T];V,H)$ the class of all such forms.

By the Lax--Milgram theorem, for each $t\in[0,T]$ there exists an \textit{operator associated} with $a(t,\cdot,\cdot)$, i.e., an isomorphism $\A(t):V\to V^\prime$ such that
\[
\langle \A(t) u, v \rangle = a(t,u,v)\qquad \hbox{ for all }u,v \in V:
\]
 accordingly we refer to the family $(A(t))_{t\in [0,T]}$ as the \textit{operator family} associated with $\fra:=(a(t))_{t\in [0,T]}$.
 
Regarded as an unbounded operator with domain $V$, $-\A(t)$ generates a holomorphic semigroup on $V'$, and in fact by~\cite[Thm.~7.1.5]{Are06} on $H$ too, since $a(t)$ is for all $t$ a bounded, $H$-elliptic sesquilinear form: with an abuse of notation we denote its generator -- the part of $-\A(t)$ in $H$ -- again by $-\A(t)$, and the semigroup by 
\[
\mathcal T_t:=\{e^{-r\A(t)}\mid r\ge 0\} .
\]
Hence, for each fixed $t,s\in [0,T]$ the Cauchy problem
\[
\begin{split}
\dot{u}(r)+\A(t)u(r)&=0, \qquad r\in [s,T],\\
u(s)&=x\in H,
\end{split}
\]
is well-posed, its solution being given by $u(r,x):=e^{-(r-s)\A(t)}x$. However, we are rather going to focus on the non-autonomous Cauchy problem
\begin{equation}\label{evolution equation u(s)=x}
\begin{split}
\dot{u}(t)+\A(t)u(t)&=0, \qquad t\in [s,T],\\
u(s)&=x\in H.
\end{split}
\end{equation}

In order to introduce the main objects of our investigations, let $\fra\in \Formm([0,T];V,H)$. A classical well-posedness theorem by J.-L.\ Lions~\cite[§§~XVIII.3.2--3]{DauLio92} states that for each $s\in[0,T[$ and each $x\in H$
\eqref{evolution equation u(s)=x}
admits a unique solution $u$ in the \textit{maximal regularity space}
\[
MR(V,V'):=MR(s,T;V,V'):=L^2(s,T;V)\cap H^1(s,T;V').
\] 
It is well-known that 
$\MaxR (V,V')$ is continuously embedded into $C([s,T];H)$, see e.g.~\cite[Prop.~III.1.2]{Sho97}: this allows us to introduce a family of linear operators by
 \begin{equation}\label{evolution family}
U(t,s):H\ni x\mapsto U(t,s)x:=u(t)\in H,\qquad (t,s)\in\overline{\Delta},
\end{equation}
where here and in the following we adopt the notation
\[
 \Delta:=\{(t,s)\in (0,T)^2:s< t\}.
\]
and $u$ is the unique solution of the Cauchy problem~\eqref{evolution equation u(s)=x} in $\MR(V,V')$. Letting $X:=V'\hbox{ and } D:=V$ (whence in particular $Tr=H$) in~\cite[Prop.~2.3 and Prop.~2.4]{AreChiFor07}  it now follows that $\U:=(U(t,s))_{(t,s)\in \overline{\Delta}}$ is a \emph{strongly continuous evolution family} on $H$, i.e., the following properties hold:
 \begin{enumerate}[(i)]
 	\item $U(s,s)=\Id_H
 	$ for all $s\in [0,T]$,
 	\item $U(t,s)= U(t,r)U(r,s)$ for all $0\leq s\leq r\leq t\leq T$,
 	\item $(t,s)\mapsto U(t,s)x$ is for all $x\in H$ continuous from $\overline{\Delta}$ into $H$.
 	\end{enumerate}
If in fact $\fra\in\Formm([0,\infty[;V,H)$, then arguing as above we deduce that
\begin{equation*}
\begin{split}
\dot{u}(t)+\A(t)u(t)&=0, \qquad t\in [s,\infty[,\\
u(s)&=x\in H,
\end{split}
\end{equation*}
has for all $s>0$ and all $x\in H$ a unique solution $u\in L^2_{Loc}(s,\infty;V)\cap H^1_{Loc}(s,\infty;V')$, hence $u\in C([0,\infty[;H)$: this defines a strongly continuous evolution family $(U(t,s))_{0\le s\le t<\infty}$.

In the following we will refer to $\U:=(U(t,s))_{(t,s)\in \overline{\Delta}}$ as
the \textit{evolution family associated with the non-autonomous form $\fra$} or \textit{with the operator family $(\A(t))_{t\in [0,T]}$}.

 

\begin{remark}\label{remark-rescaling} 
A non-autonomous form is called \emph{coercive} if~\eqref{ellipticity} is satisfied with $\omega=0$. Now, $\fra$ satisfies~\eqref{ellipticity} if and only if the form $\fra_{\omega}$ given by
\begin{equation*}\label{eq:frao}
	\fra_{\omega}(t;u,v):=a(t;u,v)+\omega (u\mid v)
\end{equation*}
is coercive:
because $\fra_{\omega}\in \Formm([0,T];V,H)$ in its own right, it is associated with an evolution family.
Moreover, $u$ is a solution of class $MR(V,V')$ of~\eqref{evolution equation u(s)=x} if and only if $v:=e^{-\omega(.-s)}u$ is a solution of class $MR(V,V')$ of
	\begin{equation*}
	\begin{split}
	\dot{v}(t)+(\omega+\A(t))v(t)&=0, \qquad t\in [s,T],\\
 v(s)&=x .
 \end{split}
	\end{equation*}
	Thus, the evolution family associated with $\fra_{\omega}$ is simply obtained by rescaling, i.e.,
	\begin{equation}\label{eq:rescaled}
	U_{\omega}(t,s):=e^{-\omega(t-s)}U(t,s),\qquad (t,s)\in \overline{\Delta}.
	\end{equation}
\end{remark}

The earliest well-posedness results for~\eqref{evolution equation u(s)=x} were obtained by Kato based on an approximation method based on the theory of product integrals under strong regularity assumptions on the dependence $t\mapsto a(t)$. Kato's approach has been extended to non-autonomous form of class $\Formm([0,T];V,H)$ in \cite{LaaElm13, ElmLaa16}. We sketch the construction of evolution families proposed in \cite{SanLaa15} for the sake of self-containedness, since we are going to use it repeatedly in the next sections.

Let $\Lambda=(\lambda_0,\ldots,\lambda_{n+1})$ be
a partition of $[0,T]$, i.e., $0=\lambda_0<\lambda_1<\ldots<\lambda_{n+1}=T$. Let $(a_k)_{k\in \N}$ be a family of sesquilinear forms defined by
\begin{equation}\label{eq:form-moyen integrale}
a_k:V\times V\ni (u,v)\mapsto \frac{1}{\lambda_{k+1}-\lambda_k}
\int_{\lambda_k}^{\lambda_{k+1}}a(r;u,v) \d r\in \C,\qquad k=0,1,\ldots,n.
\end{equation}
 All these forms lie in $\Formm([0,T];V,H)$ with constants $M$, $\alpha$, and $\omega$. The associated operators $\A_k\in \L(V,V')$ are given by
\begin{equation}\label{eq:op-moyen integrale}
\A_k:V\ni u\mapsto \frac{1}{\lambda_{k+1}-\lambda_k}
\int_{\lambda_k}^{\lambda_{k+1}}\A(r)u \d r\in V',\qquad k=0,1,\ldots,n.
\end{equation}

The mapping $\A(\cdot):[0,T]\to \L(V,V')$ is strongly measurable by Pettis' Theorem~\cite[Thm.~1.1.1]{AreBatHie01}
 since $t\mapsto \A(t)u$ is weakly measurable
 and $V'$ is assumed to be separable. On the other hand, $\|\A(t)u\|_{V'}\leqslant M \|u\|_V$ for 
all $u\in V$ and a.e. $t\in [0,T].$ Thus $[0,T]\ni t\mapsto \A(t)u\in V'$ is Bochner integrable for all $u\in V.$ Hence the integrals in~\eqref{eq:form-moyen integrale} 
and~\eqref{eq:op-moyen integrale} are well defined.

Next, consider the bounded $H$-elliptic non-autonomous form $\fra_\Lambda:=(a_\Lambda(t))_{t\in[0,T]}$ defined by
 \begin{equation}\label{form: approximation formula1}
 a_\Lambda(t;\cdot,\cdot):V\times V\ni (u,v)\mapsto \begin{cases}
 a_k(u,v)&\hbox{if }t\in [\lambda_k,\lambda_{k+1}[\\
 a_n(u,v)&\hbox{if }t=T\ .
 \end{cases}
\end{equation}
Its associated time-dependent operator family ${\mathcal A}_\Lambda:=(\A_\Lambda(t))_{t\in [0,T]}\subset \L(V,V')$ is given by 
 \begin{equation}\label{form: approximation formula2}
 \A_{\Lambda}(t):=\begin{cases}
 \A_k&\hbox{if }t\in [\lambda_k,\lambda_{k+1}[\\
 \A_n &\hbox{if }t=T\ .
 \end{cases}
\end{equation}
 For each $k=0,1,\ldots,n$ we denote by $\mathcal T_k:=\{e^{-rA_k}\mid r\ge 0\}$ the $C_0$-semigroup generated by $-\A_k.$ For each  $a,b\in [0,T]$ such that
\begin{equation}\label{eq:partition}
\lambda_{m-1}\leq a<\lambda_m<\ldots <\lambda_{l-1}\leq b<\lambda_{l}
\end{equation}
 we define the operator families $\mathcal U_\Lambda:=(U_\Lambda(t,s))_{(t,s)\in \Delta}\subset \mathcal{L}(V')$ by
\begin{equation}\label{promenade1}
 U_\Lambda (b,a)= e^{-(b-\lambda_{l-1})A_{l-1}}e^{-(\lambda_{l-1}-\lambda_{l-2})A_{l-2}}\ldots e^{-(\lambda_{m-1}-\lambda_{m-2})A_{m-2}} e^{-(\lambda_{m}-a)A_{m-1}},
 \end{equation}
 and for $\lambda_{l-1}\leq a\leq b<\lambda_{l}$ by 
 \begin{equation}\label{promenade2}
 U_\Lambda (b,a)= e^{-(b-a)A_l}.
 \end{equation}
 Remark that $\mathcal U_\Lambda$ defines an evolution family on $H$ (as well as on $V'$ and $V$), since all semigroups $\mathcal T_k$ consist of bounded linear operators on $H$. Additionally, one sees that the conditions \eqref{measurability}--\eqref{ellipticity} are satisfied by the forms $\fra_\Lambda$, too. Moreover, for all $x\in H$ the function $u_\Lambda(\cdot):= U_\Lambda(\cdot,s)x$ is the unique solution of class $MR(s,T;V,V')$ of the problem
 \begin{equation}\label{Cauchy discretisation}
 \begin{split}
\dot{u}_\Lambda(t)+\A_{\Lambda}(t)u_\Lambda(t)&=0, \qquad t\in [s,T],\\
u_\Lambda(s)&=x\ .
\end{split}
 \end{equation}


A similar approximation scheme was introduced in~\cite{ElmLaa16} in the more general context of inhomogeneous non-autonomous problems; several convergence results could be deduced there, depending on conditions satisfied by the non-autonomous form. In the language of evolution families, we can paraphrase Proposition 3.1 in~\cite{ElmLaa16} and state the following.

\begin{theorem}\label{convergence forte de solution approchee} 
Let $\fra\in \Formm([0,T];V,H)$ and let $\U$ and $\U_\Lambda$ be the evolution families associated with $\fra$ and $\fra_\Lambda$, respectively. Then 
\[
\lim\limits_{|\Lambda|\to 0}U_\Lambda(t,s)=U(t,s)\qquad\hbox{ for all }(t,s)\in\Delta
\]
in the strong operator topology of $\mathcal L(H)$.
\end{theorem}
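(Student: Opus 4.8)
The plan is to prove convergence $U_\Lambda(t,s) \to U(t,s)$ in the strong operator topology of $\mathcal{L}(H)$ by exploiting the fact that both $u_\Lambda(\cdot) := U_\Lambda(\cdot,s)x$ and $u(\cdot) := U(\cdot,s)x$ are the unique $MR(s,T;V,V')$-solutions of the respective Cauchy problems \eqref{Cauchy discretisation} and \eqref{evolution equation u(s)=x}, with the same initial datum $x \in H$ at time $s$. Since $MR(V,V')$ embeds continuously into $C([s,T];H)$, strong convergence in $\mathcal{L}(H)$ for each fixed $(t,s) \in \Delta$ will follow if I can show $u_\Lambda \to u$ in a topology that dominates the $C([s,T];H)$-norm, for instance convergence in $MR(V,V')$ or at least uniform convergence in $H$ on $[s,T]$.

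First I would record the uniform a priori bounds. Because all forms $\fra_\Lambda$ satisfy \eqref{measurability}--\eqref{ellipticity} with the \emph{same} constants $M,\alpha,\omega$ as $\fra$ (as already noted in the construction), the standard Lions energy estimate gives a bound on $\|u_\Lambda\|_{MR(s,T;V,V')}$ that is uniform in the partition $\Lambda$, depending only on $\|x\|$ and the three structural constants. Thus $(u_\Lambda)$ is bounded in the Hilbert space $MR(s,T;V,V')$, and along a subsequence $u_{\Lambda_j} \rightharpoonup w$ weakly in $MR(V,V')$ as $|\Lambda_j| \to 0$. The key identification step is then to show that any such weak limit $w$ solves the limit problem \eqref{evolution equation u(s)=x}, so that by Lions' uniqueness $w = u$ and the whole net converges.

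The heart of the argument is the convergence of the approximating operator family to the original one. By construction \eqref{eq:op-moyen integrale}, $\A_\Lambda(t)$ is built from the averages $\A_k$ of $\A(\cdot)$ over the partition intervals; since $t \mapsto \A(t)u$ is Bochner integrable into $V'$ for each $u \in V$ (as established in the excerpt via Pettis' theorem), the Lebesgue differentiation theorem for Bochner integrals yields $\A_\Lambda(\cdot)u \to \A(\cdot)u$ in $L^2(s,T;V')$ (or at least pointwise a.e.\ together with the uniform bound $\|\A_\Lambda(t)u\|_{V'} \le M\|u\|_V$, whence dominated convergence). I would then pass to the limit in the weak formulation of \eqref{Cauchy discretisation}: testing against a fixed $v \in V$ and a smooth time-profile, the term $\int \langle \A_\Lambda(t)u_\Lambda(t), v\rangle\,\d t$ must be controlled, which requires combining the weak convergence $u_{\Lambda_j} \rightharpoonup w$ in $L^2(s,T;V)$ with the convergence $\A_{\Lambda_j}(\cdot) \to \A(\cdot)$; this mixed weak--strong limit is the main obstacle, since the product of two merely weakly convergent sequences need not converge to the product of the limits.

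The resolution of this obstacle is to arrange one strong and one weak factor. I expect to upgrade the weak convergence to strong convergence of $u_{\Lambda_j} \to w$ in $L^2(s,T;H)$ by an Aubin--Lions-type compactness argument (using boundedness in $L^2(s,T;V)$ together with boundedness of the time-derivatives in $L^2(s,T;V')$, and the compact-or-at-least-continuous embedding structure $V \hookrightarrow H \hookrightarrow V'$), or alternatively to exploit the a.e.\ operator-norm convergence $\A_{\Lambda_j}(t) \to \A(t)$ in $\mathcal{L}(V,V')$ to write the offending term as $\langle (\A_{\Lambda_j}(t)-\A(t))u_{\Lambda_j}(t),v\rangle + \langle \A(t)u_{\Lambda_j}(t),v\rangle$, where the first summand vanishes by dominated convergence and the uniform bound on $\|u_{\Lambda_j}\|_{L^2(V)}$, and the second passes to the limit by weak convergence since $v \mapsto \langle \A(t)\cdot,v\rangle$ is a fixed bounded functional. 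Having identified $w = u$, uniqueness forces convergence of the full net, and the embedding $MR(V,V') \hookrightarrow C([s,T];H)$ then delivers $U_\Lambda(t,s)x = u_\Lambda(t) \to u(t) = U(t,s)x$ in $H$ for every fixed $(t,s) \in \Delta$ and every $x \in H$, which is precisely the asserted strong convergence. Since this merely paraphrases Proposition~3.1 of~\cite{ElmLaa16}, I would either cite that result directly or reproduce the above energy-and-compactness scheme for completeness.
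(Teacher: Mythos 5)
Your scheme, as written, proves strictly less than the theorem asserts: it ends in \emph{weak}, not strong, convergence. After identifying every weak cluster point with $u$, you obtain $u_\Lambda\rightharpoonup u$ in $MR(s,T;V,V')$ for the full net, and you then invoke the embedding $MR(s,T;V,V')\hookrightarrow C([s,T];H)$ to ``deliver'' $u_\Lambda(t)\to u(t)$ in $H$. But a continuous embedding maps weakly convergent sequences to weakly convergent sequences only: the point evaluation $w\mapsto w(t)$ is a bounded linear map from $MR(s,T;V,V')$ to $H$, so all you may conclude is $U_\Lambda(t,s)x\rightharpoonup U(t,s)x$ weakly in $H$ --- convergence in the weak, not the strong, operator topology. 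Upgrading would require $\|u_\Lambda(t)\|\to\|u(t)\|$, i.e.\ passing to the limit in the energy identity $\|u_\Lambda(t)\|^2=\|x\|^2-2\int_s^t\Re a_\Lambda(r;u_\Lambda(r),u_\Lambda(r))\,\d r$, whose integrand is again quadratic in the weakly convergent sequence; weak lower semicontinuity gives one inequality only, and nothing in your argument supplies the other.

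The identification step also has its strong and weak factors misplaced, in both proposed branches. Aubin--Lions needs the embedding $V\hookrightarrow H$ to be \emph{compact}, which is not a hypothesis of the theorem (only continuity and density are assumed; note that Proposition~\ref{cor:stability} adds compactness explicitly, precisely because it is not available in general); and even granting $u_{\Lambda_j}\to w$ strongly in $L^2(s,T;H)$, the term $\langle(\A_{\Lambda_j}(t)-\A(t))u_{\Lambda_j}(t),v\rangle$ is estimated only through $\|u_{\Lambda_j}\|_{L^2(s,T;V)}$, since the operators act from $V$ to $V'$, so it does not vanish. Your alternative, a.e.\ convergence $\A_{\Lambda_j}(t)\to\A(t)$ in the \emph{norm} of $\mathcal L(V,V')$, is false in general: Lebesgue differentiation gives $\A_\Lambda(t)u\to\A(t)u$ in $V'$ for each fixed $u$ (and, by separability of $V$ plus the uniform bound~\eqref{boundedness}, for all $u$ off a common null set), i.e.\ strong operator convergence a.e., never uniformity over the unit ball of $V$. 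The identification can be repaired by dualizing so that the strong factor sits on the \emph{fixed} test vector, $\langle\A_{\Lambda_j}(t)u_{\Lambda_j}(t),v\rangle=\overline{\langle\A^*_{\Lambda_j}(t)v,u_{\Lambda_j}(t)\rangle}$ with $\A^*_{\Lambda_j}(\cdot)v\to\A^*(\cdot)v$ in $L^2(s,T;V')$ paired against $u_{\Lambda_j}\rightharpoonup w$ in $L^2(s,T;V)$ --- but this fixes only the identification, not the first gap. The argument the paper actually points to, \cite[Prop.~3.1]{ElmLaa16}, avoids weak limits altogether: estimate the difference directly via $\frac{\d}{\d t}\|u_\Lambda-u\|^2=-2\Re\langle\A_\Lambda(u_\Lambda-u),u_\Lambda-u\rangle-2\Re\langle(\A_\Lambda-\A)u,u_\Lambda-u\rangle$, absorb the first term by the uniform $H$-ellipticity~\eqref{ellipticity} of $\fra_\Lambda$ and the second by Young's inequality, and conclude by Gronwall that $\|u_\Lambda(t)-u(t)\|^2\le\frac{e^{2\omega(t-s)}}{2\alpha}\int_s^t\|(\A_\Lambda(r)-\A(r))u(r)\|^2_{V'}\,\d r$, whose right-hand side tends to $0$ by your own Lebesgue-point/step-function observation together with the domination $\|(\A_\Lambda(r)-\A(r))u(r)\|_{V'}\le 2M\|u(r)\|_V\in L^2(s,T)$. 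This yields strong convergence, uniformly in $t\in[s,T]$, with no compactness and no subsequences; you should replace your compactness scheme by this estimate (or cite \cite{ElmLaa16}, as the paper does).
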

 The proof of Theorem  \ref{convergence forte de solution approchee}  is very similar to that of  \cite[Prop.~3.1]{ElmLaa16} and will be omitted. 

The product integral method can be applied to deduce two results about the long-time behavior of evolution families.
The following assertion about quasi-contractivity is similar to~\cite[Prop.~2.1]{Laa18}, whereas \textit{strong} stability was proved in a special case in~\cite[Thm.~5.4]{AreDieKra14}. 

\begin{proposition}\label{prop:ues}
Let $\fra\in \Formm([0,T];V,H)$. Then the associated evolution family $\mathcal U$ is \emph{quasi-contractive}, i.e., 

\[
\|U(t,s)\|_{\mathcal L(H)}\le e^{\int_s^t \omega(r)\ dr}\qquad \hbox{ for all }(t,s)\in \overline{\Delta},
\] 
for some $\omega\in L^1(0,T)$ such that
\begin{equation}\label{eq:accretivity}
	\Re a(t;u,u)+\omega(t)\|u\|^2_{H}\geq 0\qquad \hbox{for a.e. }t\in[0,T]\hbox{ and }u\in V.
	\end{equation}
 If in particular $\fra\in\Formm([0,\infty[;V,H)$, $\omega\in L^1_{loc}([0,\infty[)$, and $\limsup\limits_{t\to\infty}\frac{1}{t-t_0}\int_{t_0}^{t} \omega(r)\ dr=\Omega<0$ for some $t_0$, then $\U$ is \emph{uniformly exponentially stable}, i.e., 
\[
\|U(t,s)\|_{\mathcal L(H)}\le M_{t_0} e^{(t-t_0)\Omega}\qquad \hbox{for some }M_{t_0}\hbox{ and all }t>t_0.
\]
\end{proposition}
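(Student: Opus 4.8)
The plan is to reduce everything to the piecewise-constant approximants $\fra_\Lambda$, where the evolution family is an honest composition of semigroups generated by the $-\A_k$, and then pass to the limit using Theorem~\ref{convergence forte de solution approchee}.

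For the quasi-contractivity statement, first I would fix the function $\omega\in L^1(0,T)$: since $\Re a(t;u,u)+\omega\|u\|^2_H\ge \alpha\|u\|_V^2\ge 0$ by \eqref{ellipticity}, the constant $\omega$ already works, but to get the sharp integral bound I would take $\omega(t):=-\essinf_{\|u\|_H=1}\Re a(t;u,u)$ (truncated to be integrable via \eqref{boundedness}), which satisfies \eqref{eq:accretivity} by construction. The key observation is that, on each subinterval $[\lambda_k,\lambda_{k+1}[$, the averaged form $a_k$ from \eqref{eq:form-moyen integrale} satisfies $\Re a_k(u,u)+\bar\omega_k\|u\|_H^2\ge 0$ where $\bar\omega_k:=\frac{1}{\lambda_{k+1}-\lambda_k}\int_{\lambda_k}^{\lambda_{k+1}}\omega(r)\,dr$, simply by integrating \eqref{eq:accretivity}. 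Hence each semigroup factor in \eqref{promenade1} is quasi-contractive on $H$ with $\|e^{-r\A_k}\|_{\mathcal L(H)}\le e^{r\bar\omega_k}$, a standard consequence of accretivity of $\A_k+\bar\omega_k$ (via the Lumer--Phillips theorem, or by differentiating $\|e^{-r\A_k}x\|_H^2$ and using $\Re(\A_k y\mid y)\ge -\bar\omega_k\|y\|_H^2$). Multiplying the factors in \eqref{promenade1} telescopes the exponents, giving $\|U_\Lambda(t,s)\|_{\mathcal L(H)}\le e^{\int_s^t\omega(r)\,dr}$ for every partition $\Lambda$. Passing to the limit $|\Lambda|\to 0$ via Theorem~\ref{convergence forte de solution approchee}, strong convergence is enough to preserve the norm bound, since $\|U(t,s)x\|_H=\lim_\Lambda\|U_\Lambda(t,s)x\|_H\le e^{\int_s^t\omega}\|x\|_H$.

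For the uniform exponential stability claim, I would work on $[0,\infty[$ and exploit the evolution-family property (ii) to split a long interval into unit-length blocks. Given $t>t_0$, writing $N=\lfloor t-t_0\rfloor$ and applying $\|U(t,s)\|\le e^{\int_s^t\omega}$ together with the hypothesis $\limsup_{t\to\infty}\frac{1}{t-t_0}\int_{t_0}^t\omega=\Omega<0$, I would choose $\Omega'\in(\Omega,0)$ and find $T_1$ so that $\int_{t_0}^t\omega\le (t-t_0)\Omega'$ for all $t\ge T_1$. The constant $M_{t_0}$ then absorbs the bounded behavior on the transient interval $[t_0,T_1]$ (finite because $\omega\in L^1_{loc}$) together with the gap between $\Omega$ and $\Omega'$; concatenating via (ii) over the blocks and estimating each factor by its exponential bound yields $\|U(t,s)\|_{\mathcal L(H)}\le M_{t_0}e^{(t-t_0)\Omega}$.

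The main obstacle I anticipate is the passage to the limit: one must be sure that the per-partition bound, which lives at the level of the approximants $U_\Lambda$, actually transfers to $U$. This is where Theorem~\ref{convergence forte de solution approchee} is essential, and the subtle point is that only \emph{strong} convergence is available—fortunately, for a uniform \emph{norm} bound this suffices, since lower semicontinuity of the norm under strong limits preserves the inequality $\|U_\Lambda(t,s)x\|\le e^{\int_s^t\omega}\|x\|$ in the limit. A secondary technical care is the integrability of the sharp $\omega$: one must verify that the chosen $\omega(t)$ genuinely lies in $L^1(0,T)$, which follows from the two-sided control $-M\|u\|_V^2\le \Re a(t;u,u)$ combined with the continuous embedding $V\hookrightarrow H$.
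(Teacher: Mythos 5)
Your proof is correct and follows essentially the same route as the paper's own argument: integrate \eqref{eq:accretivity} over each partition subinterval to get $\Re a_k(u,u)+\bar\omega_k\|u\|_H^2\ge 0$ for the averaged forms, deduce $\|e^{-r\A_k}\|_{\mathcal L(H)}\le e^{r\bar\omega_k}$ by accretivity, telescope the product \eqref{promenade1} to obtain $\|U_\Lambda(t,s)\|_{\mathcal L(H)}\le e^{\int_s^t\omega(r)\,dr}$, and pass to the limit via Theorem~\ref{convergence forte de solution approchee}, strong convergence sufficing by lower semicontinuity of the norm --- which is exactly the role played by the paper's appeal to Fatou's Lemma. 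Two small remarks. First, your construction of a ``sharp'' $\omega$ is superfluous: the statement only requires \emph{some} $\omega\in L^1(0,T)$ satisfying \eqref{eq:accretivity}, and the constant from \eqref{ellipticity} works, as you yourself note; moreover, in your integrability discussion the finiteness of $-\essinf_{\|u\|_H=1}\Re a(t;u,u)$ from above comes from \eqref{ellipticity}, while \eqref{boundedness} (tested against a fixed $u_0$) gives the bound from below, not the other way around. Second, on the stability assertion: your claim that $M_{t_0}$ can absorb ``the gap between $\Omega$ and $\Omega'$'' is not literally true, since $e^{(t-t_0)(\Omega'-\Omega)}$ is unbounded as $t\to\infty$; from the $\limsup$ hypothesis one genuinely obtains only the decay rate $\Omega'$ for each $\Omega'\in(\Omega,0)$. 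That said, the paper's own one-line proof commits the identical leap (it writes $\|U(t,t_0)\|\le e^{(t-t_0)\Omega}$ directly, which the $\limsup$ hypothesis does not warrant), so your write-up matches the published argument in substance --- indeed it is more explicit about where the subtlety sits --- and your decomposition into unit-length blocks is unnecessary, since the quasi-contractivity bound applies directly on all of $[t_0,t]$.
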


In view of Proposition~\ref{prop:ues} and Remark~\ref{remark-rescaling} and up to scalar perturbations of $(A(t))_{t\in [0,T]}$, we will thus often assume without loss of generality that the associated evolution family $\mathcal U$ is contractive.

\begin{proof}
Let $[a,b]\subset [0,T]$, $\Lambda$ be a partition of $[a,b]$ as in~\eqref{eq:partition} and consider the discretized evolution family $\U_\Lambda$. Let $\omega_k\in \R$ be defined by 
\begin{equation}\label{eq:omega-moyen integrale}
\omega_k:=\frac{1}{\lambda_{k+1}-\lambda_k}
\int_{\lambda_k}^{\lambda_{k+1}}\omega(r) \d r, \qquad k=0,1,\ldots,n.
\end{equation}
Then by definition of $a_k$ in~\eqref{eq:form-moyen integrale}, \eqref{eq:accretivity} implies 
\[
\Re a_k(u,u)+\omega_k\|u\|^2_{H}\geq 0\qquad\hbox{for all }u\in V\hbox{ and }k=0,1,\cdots,n,
\]
 hence the associated semigroup satisfies
\begin{equation}\label{eq2:expStability} 
\|e^{-rA_k }\|_{\L(H)}\leq e^{r\omega_k} \qquad \hbox{for all }k=0,1,\ldots,n,\ r\ge 0.
\end{equation}
Now we obtain from \eqref{promenade1}
\begin{equation}\label{eq3:expStability} 
\|U_\Lambda(a,b)\|_{\L(H)} \leq \prod_{k=0}^n e^{\int_{\lambda_{k-1}}^{\lambda_{k}}\omega(r)\d r}= e^{\int_{a}^{b}\omega(r)\d r}.
\end{equation}
The claim now follows from Theorem \ref{convergence forte de solution approchee} and Fatou's Lemma.

The second assertion follows by observing that
\[
\|U(t,s)x\|\le \|U(t,t_0)\| \|U(t_0,s)x\|\le e^{(t-t_0)\Omega}\|U(t_0,s)\|\|x\|=:M_{t_0}e^{(t-t_0)\Omega}\|x\|
\]
for all $0\le s\le t<\infty$ and $x\in H$.
\end{proof}


\section{Invariance Properties}\label{sec:lattice}

Let us discuss invariance of a given subset $C$ of $H$ under $\U$, i.e., whether $u(s)\in C$ implies that the solution $u(t)$ of~\eqref{evolution equation u(s)=x} lies in $C$ for any $(t,s)\in \Delta$. The following criterion is known: it combines~\cite[Thm.~4.1]{SanLaa15} with an extension to non-accretive forms~\cite[Thm.~2.1]{ManVogVoi05} of Ouhabaz' classical invariance criterion~\cite[Thm.~2.1]{Ouh96}.

\begin{proposition}\label{cor:easytho}
Let $\fra\in \Formm([0,T];V,H)$. Let $C$ be a closed convex subset of $H$ and denote by $P$ the projector of $H$ onto $C$. Consider the following assertions:
\begin{enumerate}[(i)]
\item $C$ is invariant under the semigroup $\mathcal T_t$ associated with $a(t)$ for a.e.\ $t\in [0,T]$;
\item $C$ is invariant under the evolution family $\U$.
\item $Pu\in V$ and $\Re a(t;Pu,u-Pu)\ge 0$ for all $u\in V$ and a.e.\ $t\ge 0$;
\end{enumerate}
Then $(i)$ is equivalent to $(iii)$ and both imply $(ii)$.
\end{proposition}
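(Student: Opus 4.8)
The plan is to prove the equivalence $(i)\Leftrightarrow(iii)$ first, since this is essentially a pointwise-in-time statement that reduces to the classical (autonomous) invariance criterion of Ouhabaz, in the non-accretive form given by~\cite[Thm.~2.1]{ManVogVoi05}. For each fixed $t\in[0,T]$, the form $a(t;\cdot,\cdot)$ is a bounded $H$-elliptic sesquilinear form associated with the generator $-\A(t)$ of the holomorphic semigroup $\mathcal T_t$; applying the cited invariance criterion to this single form yields that $C$ is invariant under $\mathcal T_t$ if and only if $Pu\in V$ and $\Re a(t;Pu,u-Pu)\ge 0$ for all $u\in V$. Since this holds for a.e.\ $t\in[0,T]$, assembling the pointwise statements gives the desired equivalence. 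The only care needed here is the measurability bookkeeping: the projection $P$ does not depend on $t$, so no measurable selection is required, and condition~\eqref{measurability} ensures the relevant quantities are measurable in $t$.

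\textbf{Next I would} establish the implication $(i)\Rightarrow(ii)$, which is the genuinely non-autonomous part and where the product-integral machinery enters. The strategy is to transfer invariance from the constituent semigroups to the evolution family via the discretization $\U_\Lambda$ and the convergence Theorem~\ref{convergence forte de solution approchee}. Concretely, for a partition $\Lambda$ as in~\eqref{eq:partition}, each building block $\mathcal T_k=\{e^{-rA_k}\mid r\ge 0\}$ is the semigroup associated with the averaged form $a_k$ from~\eqref{eq:form-moyen integrale}. I would first check that $C$ is invariant under each $\mathcal T_k$: this follows because $a_k$ is an average of the forms $a(r;\cdot,\cdot)$ over $[\lambda_k,\lambda_{k+1}]$, and the invariance condition $\Re a(r;Pu,u-Pu)\ge 0$ from $(iii)$ is preserved under averaging, so $\Re a_k(Pu,u-Pu)\ge 0$; applying the criterion~\cite[Thm.~2.1]{ManVogVoi05} to $a_k$ gives invariance of $C$ under $\mathcal T_k$. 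Since the discretized evolution operator $U_\Lambda(t,s)$ in~\eqref{promenade1} is a \emph{composition} of such semigroup operators, and $C$ is invariant under each factor, $C$ is invariant under $U_\Lambda(t,s)$ for every partition $\Lambda$.

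\textbf{Finally}, I would pass to the limit $|\Lambda|\to 0$. Because $C$ is closed and convex, it is in particular closed under the strong (and weak) topology of $H$; Theorem~\ref{convergence forte de solution approchee} gives $U_\Lambda(t,s)x\to U(t,s)x$ strongly in $H$ for each $x\in H$ and each $(t,s)\in\Delta$. If $x\in C$, then $U_\Lambda(t,s)x\in C$ for all $\Lambda$, and since $C$ is strongly closed the limit $U(t,s)x$ also lies in $C$. This yields invariance of $C$ under $\U$, completing $(i)\Rightarrow(ii)$.

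\textbf{The main obstacle} I anticipate lies in the averaging step of $(i)\Rightarrow(ii)$: one must verify that the invariance condition genuinely survives integration in $r$, i.e.\ that $\Re a_k(Pu,u-Pu)\ge 0$ follows from the a.e.\ pointwise inequality. This is where the linearity of the averaging operation~\eqref{eq:form-moyen integrale} and the fact that $P$ is $t$-independent are essential, since otherwise the quantity $\Re a(r;P u, u-Pu)$ would involve a $t$-dependent argument inside the form and averaging would not obviously preserve positivity. A subtler technical point is ensuring $Pu\in V$ uniformly (independent of $t$), but this is immediate here because $P$ itself does not vary with $t$, so the condition $Pu\in V$ from~\eqref{measurability} together with $(iii)$ applies uniformly. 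I would note that the implication $(ii)\Rightarrow(iii)$ is \emph{not} claimed—the proposition only asserts $(i)\Leftrightarrow(iii)$ and $(iii)\Rightarrow(ii)$—so no converse transfer from the evolution family back to the pointwise forms is needed, which avoids the difficult question of recovering time-pointwise information from the two-parameter family.
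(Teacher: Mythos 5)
Your proposal is correct and follows essentially the same route as the paper: the paper establishes this proposition precisely by combining the autonomous (non-accretive) invariance criterion~\cite[Thm.~2.1]{ManVogVoi05} applied pointwise in $t$ for the equivalence $(i)\Leftrightarrow(iii)$, with the product-integral argument of~\cite[Thm.~4.1]{SanLaa15} for the implication to $(ii)$ --- namely, the condition in $(iii)$ survives the averaging~\eqref{eq:form-moyen integrale} because $P$ is $t$-independent, each factor of $U_\Lambda(t,s)$ in~\eqref{promenade1} then leaves the closed convex set $C$ invariant, and closedness of $C$ lets one pass to the strong limit $|\Lambda|\to 0$ via Theorem~\ref{convergence forte de solution approchee}. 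Your discretize--average--pass-to-the-limit scheme is exactly the template the paper uses for its neighboring results (Propositions~\ref{prop:ues} and~\ref{prop:domin}), so nothing further is needed.
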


 The implication $(iii)\Rightarrow(ii)$ has been proved in~\cite[Thm.~2.2]{AreDieOuh14} in the more general case of inhomogeneous equations. Special instances of the same assertion have been obtained in~\cite[\S~3.5.5]{Tho03}. The implication $(i)\Rightarrow(ii)$ allows us to deduce invariance properties for $\mathcal U$ even when $P$ is not explicitly known, e.g., when $\mathcal T_t$ is known to preserve convexity for a.e.\ $t$ \cite{BatBob12}.




For our purposes, a particularly interesting instance of closed convex sets are order intervals in Hilbert lattices: we hence assume in the following $H$ to be a Hilbert lattice. It is known that each separable Hilbert lattice is isometrically lattice isomorphic to a Lebesgue space $L^2(X)$ for some $\sigma$-finite measure space $(X,\Sigma,\mu)$, see e.g.~\cite[Cor.~2.7.5]{Mey91}. Accordingly, we can consider the set $H_{\R}:=L^2(X;\R)$ of real-valued functions. Let $a,b\in \mathbb R\cup\{\pm\infty\}$: we introduce the (bounded or unbounded) \emph{order intervals}
\[
{[a,b]_H}:=\left\{f \in H_\R: a\le f(x)\le b\hbox{ for a.e.\ }x\in X\right\}:
\]
which  are closed convex subsets of $H$. Many qualitative properties of solutions to evolution equations can be described by means of invariance of order intervals under the flow that governs the associated Cauchy problems.


\begin{definition}\label{submarkovian EVF} Let $(X,\Sigma,\mu)$ be a $\sigma$-finite measure space. An evolution family $\U$ on the Hilbert lattice $L^2(X)$ is called
	\begin{enumerate}[(a)]
\item \emph{real} if $U(t,s)H_\R\subset H_\R$ for all $(t,s)\in \Delta$; 
\item \emph{positive} if it is real and $U(t,s)[0,\infty[_H\subset [0,\infty[_H$ for all $(t,s)\in \Delta$;
\item \emph{$L^p$-contractive}, $p\in [1,\infty]$, if $U(t,s)$ maps $\{f\in L^2(X)\cap L^p(X):\|f\|_{L^p}\le 1\}$ into itself for all $(t,s)\in \Delta$;
	\item \emph{completely contractive} if it is both $L^1$-contractive and $L^\infty$-contractive;
\item \emph{completely quasi-contractive} if there is some constant $\tilde{\omega}$ such that the rescaled evolution family $\U_{\tilde \omega}$ defined by
	\begin{equation}\label{eq:rescaled-tilde}
	U_{\tilde\omega}(t,s):=e^{-\tilde\omega(t-s)}U(t,s),\qquad (t,s)\in \Delta,
	\end{equation}
 is completely contractive;
		\item \emph{sub-Markovian} if it is positive and $L^\infty$-contractive; \emph{Markovian} if additionally $\|U(t,s)\|_{{\mathcal L}(L^\infty)}=1$;
{\item \emph{sub-stochastic} if it is positive and $L^1$-contractive}; \emph{stochastic} if additionally and $\|U(t,s)f\|_{L^1(X)}=\|f\|_{L^1(X)}$ for all $0\leq f\in L^2(X)\cap L^1(X)$ and all $(t,s)\in\Delta$.
	\end{enumerate}
\end{definition}

\begin{remark} 

 Let $\U$ be a completely quasi-contractive evolution family on $L^2(X).$ Then by the Riesz--Thorin Theorem the rescaled evolution family $\U_{\tilde{\omega}}$ is $L^p$-contractive for all $p\in[1,\infty]$ and each $U(t,s)$ can be extended from $L^p(X)\cap L^2(X)$ to a quasi-contractive operator $U_p(t,s)$ on $L^p(X)$ for all $p\in [1,\infty].$ The extrapolated family $\U_p:=\{U_p(t,s) \mid (t,s)\in\Delta\}$ is consistent, i.e.,	for all $p\in [1,\infty]$
	\[U_p(t,s)f=U_2(t,s)f\qquad \hbox{for all }(t,s)\in \Delta \hbox{ and all }f\in L^p(X)\cap L^2(X).
	\]
Clearly, $U_p(s,s)=I_{H}$ and $U_p(t,s)=U_p(t,r)U_p(r,s)$ for all $0\leq s\leq r\leq t\leq T$ and $p\in[1,\infty].$ Moreover, by the interpolation inequality (Hölder inequality) we obtain that $\U_p$ is strongly continuous on $\overline{\Delta}$  for all $p\in ]1,\infty[$. Using a similar argument as in \cite[Prop.\ 4]{Voi92} we conclude that $\U_p$ is a strongly continuous evolution family on $L^p(X)$ for all $p\in [1,\infty[$.

\end{remark}


For future reference let us note explicitly the following consequence of Proposition~\ref{cor:easytho}.
\begin{proposition}\label{proposition Linftycontractivity} 
The evolution family $\U$ associated with $\fra\in\Formm([0,T];V,H)$ is
\begin{enumerate}[(1)]
\item positive provided $(\Re v)^+\in V$, $\Re a(t;\Re v,\Im v)\in \mathbb R$, and $\Re a(t;(\Re v)^+,(\Re v)^-)\le 0$ for all $v\in V$ and a.e.\ $t\in[0,T]$.
\item $L^\infty$-contractive, provided $(1\wedge |v|)\sgn v\in V$ and $\Re a(t;(1\wedge |v|)\sgn v,(|v|-1)^+\sgn v)\geq 0$ for all $v\in V$ and a.e.\ $t\in[0,T]$.
\end{enumerate}
\end{proposition}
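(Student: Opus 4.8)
The plan is to obtain both assertions as applications of the implication $(iii)\Rightarrow(ii)$ in Proposition~\ref{cor:easytho}: in each case I identify the relevant closed convex set $C\subset H$, compute its metric projection $P$ explicitly, and verify that the abstract condition
\[
\Re a(t;Pu,u-Pu)\ge 0\qquad\text{for all }u\in V\text{ and a.e.\ }t\in[0,T]
\]
reduces exactly to the form inequalities in the statement; invariance of $C$ under $\U$ then yields the claimed property.

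For assertion (1) I proceed in two steps. First note that $(\Re v)^+\in V$ for all $v$ forces $(\Re v)^-=(\Re(-v))^+\in V$, and hence $\Re v,\Im v\in V$, so every quantity below is a legitimate element of $V$. To see that $\U$ is \emph{real} I take $C=H_\R$, whose projection is $Pv=\Re v$, so that $u-Pu=i\,\Im u$. Since the form is conjugate-linear in its second argument, $a(t;\Re u,i\,\Im u)=-i\,a(t;\Re u,\Im u)$ is purely imaginary as soon as $a(t;\Re u,\Im u)\in\R$; thus $\Re a(t;\Re u,i\,\Im u)=0\ge 0$ and $H_\R$ is invariant. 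To obtain positivity I take $C=[0,\infty[_H$: minimising $\|v-h\|^2=\|\Re v-h\|^2+\|\Im v\|^2$ over real $h\ge 0$ shows $Pv=(\Re v)^+$, whence $u-Pu=-(\Re u)^-+i\,\Im u$. Expanding,
\[
\Re a(t;(\Re u)^+,u-Pu)=-\Re a(t;(\Re u)^+,(\Re u)^-)+\Re a\bigl(t;(\Re u)^+,i\,\Im u\bigr),
\]
and the last summand vanishes exactly as before because $a(t;(\Re u)^+,\Im u)\in\R$. Hence condition $(iii)$ is equivalent to $\Re a(t;(\Re u)^+,(\Re u)^-)\le 0$, which is precisely the hypothesis, and Proposition~\ref{cor:easytho} delivers invariance of the positive cone.

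For assertion (2) I take $C=\{f\in L^2(X):|f|\le 1\ \text{a.e.}\}$, a closed convex set. Its metric projection is computed pointwise: for each $x$ the nearest point to $v(x)$ in the closed unit disk of $\C$ is $v(x)$ when $|v(x)|\le 1$ and $v(x)/|v(x)|$ otherwise, i.e.\ $Pv=(1\wedge|v|)\sgn v$. Consequently $u-Pu=(|v|-1)^+\sgn v$ (namely $0$ where $|v|\le 1$ and $v-v/|v|$ where $|v|>1$). The hypothesis already provides $Pu\in V$, and condition $(iii)$ reads verbatim $\Re a(t;(1\wedge|v|)\sgn v,(|v|-1)^+\sgn v)\ge 0$; Proposition~\ref{cor:easytho} then yields invariance of $C$, that is, $L^\infty$-contractivity of $\U$.

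The bulk of the argument is bookkeeping; the only genuine points requiring care are the explicit identification of the metric projections onto $H_\R$, onto the cone $[0,\infty[_H$, and onto the $L^\infty$-ball, each obtained by a real--imaginary or pointwise splitting, together with the control of the sesquilinear cross-terms mixing real and imaginary parts. It is exactly the reality hypothesis $a(t;\Re v,\Im v)\in\R$ that renders these cross-terms purely imaginary and hence harmless inside $\Re a(t;Pu,u-Pu)$; without it positivity would fail. Since all the form conditions are posited for a.e.\ $t$, they match the ``a.e.'' quantifier in criterion $(iii)$, so no further regularity is needed.
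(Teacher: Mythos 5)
Your proof is correct and follows exactly the route the paper intends: the proposition is stated there as a direct consequence of Proposition~\ref{cor:easytho}, and your identification of the closed convex sets $H_\R$, $[0,\infty[_H$, and the $L^\infty$-unit ball, with projections $\Re u$, $(\Re u)^+$, and $(1\wedge|u|)\sgn u$ respectively, is precisely the standard Ouhabaz-type verification of condition $(iii)$ that the paper leaves implicit. Note only that you silently (and correctly) read the paper's literally vacuous hypothesis $\Re a(t;\Re v,\Im v)\in\R$ as the intended $a(t;\Re v,\Im v)\in\R$ (applied, for the positivity part, to the element $v=(\Re u)^+ + i\,\Im u\in V$), which is exactly what makes the cross-terms purely imaginary.
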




Let us state a further consequence of Proposition~\ref{cor:easytho} concerning irreducibility of evolution families on $L^2(X)$ on a given $\sigma$-finite measure space $(X,\Sigma,\mu)$. We denote by ${\bf 1}_{\Xi}$ the characteristic function of any given $\Xi\in\Sigma$.

We can now provide sufficient conditions for the evolution family to converge towards a rank-one projector, thus extending to the non-autonomous setting one of the main results of the classical theory of positive semigroups. Given $\fra \in \Formm([0,T];V,H)$, if  
\begin{equation}\label{irred-1}
(\Re v)^+\in V,\ \Re a(t;\Re v,\Im v)\in \mathbb R\hbox{ and }\Re a(t;(\Re v)^+,(\Re v)^-)\le 0\quad\hbox{for all }v\in V\hbox{ and a.e.\ }t\in[0,T]
\end{equation}
and
\begin{equation}\label{irred-2}
\hbox{for all }\Xi\in\Sigma\quad {\bf 1}_\Xi V\subset V \quad\hbox{implies}\quad \mu(\Xi)=0\hbox{ or }\mu(X\setminus\Xi)=0,
\end{equation}
then the holomorphic semigroups $\mathcal T_t$ are for a.e.\ $t\in ]0,T[$ positivity improving; if they are additionally compact, then by classical Perron--Frobenius theory there is a spectral gap of size $\tilde{s}(t)>0$ between their dominant eigenvalue and the bottom 
\[
\inf\left\{\Re \lambda>0: \lambda\in\sigma(A(t))\right\}
\]
of the remaining spectrum of their generators $A(t)$.

\begin{proposition}\label{cor:stability}
Let $\fra \in\Formm([0,\infty[;V,H)$, with $V$ compactly embedded in $H$. Assume that~\eqref{irred-1} and~\eqref{irred-2} hold and that furthermore $\tilde{s}\in L^1_{loc}(]t_0,\infty[)$ and $\liminf\limits_{t\to\infty}\frac{1}{t-t_0}\int_{t_0}^t \tilde{s}(r)\ dr=\tilde{\Sigma}>0$ for some $t_0>0$.
If for a.e.\ $t\in [0,\infty[$
\begin{itemize}
\item the spectral bound of $A(t)$ is 0,
\item $\Ker(A(t))$ is spanned by the same  vector $u$, and
\item $\Ker(A^*(t))$ is spanned by the same  vector $\phi$, with $(\phi|u)=1$,
\end{itemize}
then
\[
\lim_{t\to\infty} U(t,s)=P:=\phi\otimes u\qquad\hbox{in norm  for all $s\ge 0$.}
\]
\end{proposition}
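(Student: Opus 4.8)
The plan is to reduce the assertion to a decay estimate on the invariant subspace $\phi^\perp:=\{x\in H:(\phi\mid x)=0\}$, and then to feed the spectral gap into the product–integral machinery of Section~\ref{s2}. First I would exploit that $u$ and $\phi$ are \emph{common} null vectors of the whole family. Since $u\in\Ker(A(r))$ and $\phi\in\Ker(A^*(r))$ for a.e.\ $r$, the averaged operators of~\eqref{eq:op-moyen integrale} satisfy $\A_k u=0$ and $\A_k^*\phi=0$ for every $k$ and every partition $\Lambda$, whence $e^{-r\A_k}u=u$ and $e^{-r\A_k^*}\phi=\phi$ for all $r\ge0$. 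Reading off the product representation~\eqref{promenade1} this gives $U_\Lambda(t,s)u=u$ and $U_\Lambda(t,s)^*\phi=\phi$, and passing to the limit $|\Lambda|\to0$ by Theorem~\ref{convergence forte de solution approchee} (using $(\phi\mid U(t,s)x)=\lim_\Lambda(\phi\mid U_\Lambda(t,s)x)=(\phi\mid x)$ for every $x\in H$) I obtain $U(t,s)u=u$ and $U(t,s)^*\phi=\phi$ on $\overline\Delta$. Writing $P=\phi\otimes u$, i.e.\ $Px=(\phi\mid x)u$, these two identities read $U(t,s)P=P$ and $PU(t,s)=P$, so that $P^2=P$ and
\[
U(t,s)-P=(I-P)\,U(t,s)\,(I-P).
\]
Thus $\phi^\perp=\ran(I-P)$ is invariant, $\hat U(t,s):=(I-P)U(t,s)(I-P)$ is an evolution family on $\phi^\perp$, and it remains to show $\|\hat U(t,s)\|_{\mathcal L(\phi^\perp)}\to0$ as $t\to\infty$.

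Next I would transport the spectral gap into a decay rate on $\phi^\perp$. Because $\phi$ is fixed by every $e^{-r\A_k^*}$, the subspace $\phi^\perp$ is invariant under each semigroup $\mathcal T_k$, and the restriction of~\eqref{promenade1} to $\phi^\perp$ is again a product of the restricted semigroups $e^{-r\A_k}|_{\phi^\perp}$. The Perron--Frobenius analysis preceding the statement shows that $0$ is a simple dominant eigenvalue of each $\A_k$ with spectral projection $P$, separated from the rest of the spectrum by a gap; the resulting decay estimate $\|e^{-r\A_k}|_{\phi^\perp}\|\le M\,e^{-r\sigma_k}$, together with the averaging identity~\eqref{eq:op-moyen integrale} relating $\sigma_k$ to $\tilde s$ on $[\lambda_k,\lambda_{k+1}]$, yields upon multiplying the factors of~\eqref{promenade1}
\[
\|\hat U_\Lambda(t,s)\|_{\mathcal L(\phi^\perp)}\le M\,e^{-\int_s^t\tilde s(r)\,\d r}.
\]
Passing to the limit $|\Lambda|\to0$ with Theorem~\ref{convergence forte de solution approchee} and Fatou's Lemma, exactly as in the proof of Proposition~\ref{prop:ues}, gives $\|\hat U(t,s)\|\le M\,e^{-\int_s^t\tilde s(r)\,\d r}$.

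Finally the hypothesis $\liminf_{t\to\infty}\frac{1}{t-t_0}\int_{t_0}^t\tilde s(r)\,\d r=\tilde\Sigma>0$ forces $\int_s^t\tilde s(r)\,\d r\ge\frac{\tilde\Sigma}{2}(t-t_0)$ for all $t$ sufficiently large, so that
\[
\|U(t,s)-P\|=\|\hat U(t,s)\|\le M\,e^{-\tilde\Sigma(t-t_0)/2}\xrightarrow[t\to\infty]{}0,
\]
which is the claimed norm convergence, for every fixed $s\ge0$.

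The hard part is the middle step, where the failure of self-adjointness bites. The projection $P$ onto $u$ along $\phi^\perp$ is \emph{oblique}, and for a non-normal generator the spectral gap $\tilde s(t)$ controls the spectrum but not the numerical range, so the form values $\Re a(t;w,w)$ for $w\in V\cap\phi^\perp$ may dip well below $\tilde s(t)$. Hence one cannot simply invoke the coercivity-based estimate of Proposition~\ref{prop:ues} on $\phi^\perp$ with rate $\tilde s$; instead the decay $\|e^{-r\A_k}|_{\phi^\perp}\|\le M\,e^{-r\sigma_k}$ must be extracted from the gap by a resolvent or holomorphic-functional-calculus argument, and \emph{the constant $M$ must be controlled uniformly in $k$ and in the partition}, lest it accumulate to $M^n$ when the $n$ factors of~\eqref{promenade1} are multiplied. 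Securing this uniformity — presumably by exploiting that the same projection $P$ serves all operators, together with the uniform bounds~\eqref{boundedness}--\eqref{ellipticity} — is the technical heart of the proof.
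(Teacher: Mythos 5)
Your skeleton is the paper's: from the product representation \eqref{promenade1} one gets $e^{-r\A_k}u=u$ and $e^{-r\A_k^*}\phi=\phi$ for every averaged operator \eqref{eq:op-moyen integrale}, hence the telescoping identity $U_\Lambda(b,a)-P=U_\Lambda(b,a)(I-P)^n$, then a factor-wise decay estimate on $\Ker(P)=\phi^\perp$, and finally the limit $|\Lambda|\to0$ via Theorem~\ref{convergence forte de solution approchee}; your first and last steps are fine. The genuine gap is exactly the step you yourself label the ``technical heart'' and then leave open. A functional-calculus or resolvent-contour extraction of $\|e^{-r\A_k}|_{\Ker(P)}\|\le M e^{-r\sigma_k}$ from the spectral gap produces, for non-normal $\A_k$, a constant $M$ that is generically strictly larger than $1$ (it is governed by resolvent norms along a contour separating $0$ from the rest of the spectrum, which the uniform bounds \eqref{boundedness}--\eqref{ellipticity} do not control), and since the number $n$ of factors in \eqref{promenade1} tends to infinity as $|\Lambda|\to0$, the product estimate degenerates like $M^n$. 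Nothing in your proposal shows $M=1$ or prevents this accumulation, so the middle step fails as written -- as a proof it is incomplete precisely where you predicted it would be hard.

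The paper avoids the spectrum-to-decay passage altogether and works with the restricted \emph{form}: the part $\tilde A(t)$ of $A(t)$ in $\Ker(P)$ is associated with $\tilde{\fra}\in\Formm(]0,\infty[;V\cap\Ker(P),\Ker(P))$ satisfying $\Re\tilde a(t;w,w)\ge\tilde s(t)\|w\|^2$ for all $w\in V\cap\Ker(P)$. This coercivity survives the averaging \eqref{eq:form-moyen integrale} with constant $\tilde s_k$, so exactly as in \eqref{eq2:expStability} in the proof of Proposition~\ref{prop:ues} each factor obeys $\|e^{-rA_k}(I-P)\|\le e^{-\tilde s_k r}$ with constant \emph{exactly one}; the product then telescopes to $e^{-\int_a^b\tilde s(r)\,\d r}$ with no accumulation, and Theorem~\ref{convergence forte de solution approchee} together with the $\liminf$ hypothesis finishes the argument just as in your final step. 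In other words, in the paper's proof $\tilde s$ enters as a lower bound on the numerical range of the form on $V\cap\Ker(P)$, not merely as a distance between spectral sets; your own (correct) observation that for non-normal generators the gap does not control the numerical range explains why this form-based reading is the only one under which the factor-wise estimate is contractive. (The two readings coincide when the $A(t)$ are self-adjoint, as in the network application of Example~\ref{ex:kuchm}.) If you replace your functional-calculus step by this coercivity estimate on $V\cap\Ker(P)$, the rest of your proposal goes through verbatim.
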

Under the assumptions of Proposition~\ref{cor:stability}, the vectors $u,\phi$ are strictly positive.
\begin{proof}
The part $\tilde{A}(t)$ of $A(t)$ in the Hilbert space $\Ker(P)$ is associated with a form $\tilde{\fra}\in \Formm(]0,\infty[;V\cap \Ker(P),\Ker(P))$ that satisfies 
\[
\tilde{a}(t;u,u)\ge \tilde{s}(t)\|u\|^2\qquad\hbox{for a.e. }t\in ]0,\infty[ \hbox{ and all }u\in V\cap \Ker(P).
\]
Given a partition $\Lambda=(\lambda_0,\ldots,\lambda_{n})$ of a compact interval $[a,b]$, we can hence define in the usual way the averaged forms $\tilde{a}_k$, which satisfy
\[
\Re \tilde{a}_k(u,u)\geq \tilde{s}_k \|u\|^2,\qquad u\in V\cap \Ker(P),
\]
for
\[
\tilde{s}_k:=\frac{1}{\lambda_{k+1}-\lambda_k} \int_{\lambda_{k}}^{\lambda_{k+1}}\tilde{s}(r)\d r.
\]
Therefore, the associated semigroup $(e^{-r\tilde{A}_k})_{r\ge 0}$ is uniformly exponentially stable: more precisely
\[
\|e^{-r\tilde{A}_k}\|=\|e^{-r{A}_k}(I-P)\|\le e^{-\tilde{s}_k r}
\qquad \hbox{for all }r\ge 0.
\]
Observe now that
\[
\begin{split}
U_\Lambda (b,a)-P&=U_\Lambda (b,a)P-P+U_\Lambda (b,a)(I-P)\\
&=e^{-(\lambda_{n}-\lambda_{n-1}) A_{n}}\cdots e^{-(\lambda_1-\lambda_0) A_1}P-P+U_\Lambda(b,a)(I-P)^n\\
&=U_\Lambda(b,a)(I-P)^n.
\end{split}
\]
Accordingly,
\[
\begin{split}
\|U_\Lambda (b,a)-P\|&\le \|e^{-(\lambda_{n}-\lambda_{n-1}) A_{n}}(I-P)\|\cdots
\|e^{-(\lambda_1-\lambda_0) A_1}(I-P)\|\\
&\le e^{-(\lambda_{n+1}-\lambda_n) \tilde{s}_n}\cdots e^{-(\lambda_1-\lambda_0) \tilde{s}_1}=e^{-\int_{a}^b \tilde{s}(r)\d r}
\end{split}
\]
whence
\[
\lim_{t\to\infty}\|U_\Lambda (t,t_0)-P\|\le \lim_{t\to\infty}e^{-\int_{t_0}^t \tilde{s}(r)\d r}\le \lim_{t\to\infty}e^{-\tilde{\Sigma}(t-t_0)}
\]
and for all $x\in H$ by Theorem \ref{convergence forte de solution approchee}
\[
\lim_{t\to\infty}\|U(t,t_0)x-Px\|=\lim_{t\to\infty}\lim_{|\Lambda|\to 0} \|U_\Lambda (t,t_0)x-Px\|\le \lim_{t\to\infty}e^{-\tilde{\Sigma}(t-t_0)}\|x\|.
\]
This concludes the proof.
\end{proof}

In the following sections we will often need to discuss 
complete contractivity. In order to find sufficient conditions, observe that $\U$ is $L^1$-contractive if and only if $U(t,s)^*$ is $L^\infty$-contractive for all $(t,s)\in \Delta$. How to prove $L^\infty$-contractivity of all $U(t,s)^*$?
Consider the non-autonomous adjoint form $\fra^*:[0,T]\times V\times V\to \C$ of $\fra$ defined by $\fra^*(t;u,v):=\overline{a(t;v,u)}$ for all $t\in[0,T]$ and $u,v\in V$. While $\fra^*\in \Formm([0,T];V,H)$, too, and hence $\fra^*$ is associated with an evolution family $(U_*(t,s))_{(t,s)\in \overline{\Delta}}$, one has in general $U_*(t,s) \neq U(t,s)^*$.
 However, it was observed in~\cite[Thm.~2.6]{Dan00} that the \emph{returned adjoint form} $\cev{\fra^*}:[0,T]\times V\times V\to \C$ of $\fra$ defined by \[
\cev{\fra^*}(t;u,v):=\fra^*(T-t;v,u),\quad t\in [0,T],\ u,v\in V,
\]
which clearly belongs to $\Formm([0,T];V,H)$, too, is associated with an evolution family $\cev{\U^*}$ that satisfies
	\begin{equation}\label{eq:retadjform}
	\big [ \cev{U^*}(t,s)\big ]^* f=U(T-s,T-t)f\qquad \hbox{for all }f\in H\hbox{ and }(t,s)\in{\Delta}.
	\end{equation}
In particular, $\U$ is $L^1$-contractive if and only if $\cev{\U^*}$ is $L^\infty$-contractive; $\U$ is completely contractive if so is $\cev{\U^*}$; and by Proposition \ref{proposition Linftycontractivity} we conclude the following.

\begin{proposition}\label{prop:L1contractivity} 
The evolution family $\U$ associated with $\fra\in\Formm([0,T];V,H)$ is
\begin{enumerate}[(1)]
\item $L^1$-contractive provided $(1\wedge |u|)\sgn u\in V$ and $\Re a(t;(|v|-1)^+\sgn v,(1\wedge |v|)\sgn v)\geq 0$ for all $v\in V$ and a.e.\ $t\in[0,T]$;
\item completely contractive provided $(1\wedge |u|)\sgn u\in V$ and $\Re a(t;(|v|-1)^+\sgn v,(1\wedge |v|)\sgn v)\geq 0$, $\Re a(t;(1\wedge |v|)\sgn v,(|v|-1)^+\sgn v)\geq 0$ for all $v\in V$ and a.e.\ $t\in[0,T]$.
\end{enumerate} 
\end{proposition}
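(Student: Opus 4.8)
The plan is to reduce both assertions to the $L^\infty$-contractivity criterion of Proposition~\ref{proposition Linftycontractivity}(2), applied not to $\fra$ itself but to its returned adjoint form $\cev{\fra^*}$. For part (1), I would start from the duality observation recorded just before the statement: since $(L^1)'=L^\infty$ and the $L^1\to L^1$ operator norm of $U(t,s)$ equals the $L^\infty\to L^\infty$ norm of its Hilbert space adjoint $U(t,s)^*$, the family $\U$ is $L^1$-contractive if and only if every $U(t,s)^*$ is $L^\infty$-contractive. By the identity~\eqref{eq:retadjform} the adjoints $U(t,s)^*$ are precisely the operators of the evolution family $\cev{\U^*}$ read backwards in time, so $\U$ is $L^1$-contractive if and only if $\cev{\U^*}$ is $L^\infty$-contractive; this is exactly the equivalence already stated in the text.

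Next I would apply Proposition~\ref{proposition Linftycontractivity}(2) to the form $\cev{\fra^*}\in\Formm([0,T];V,H)$, which is the form associated with $\cev{\U^*}$. That criterion yields $L^\infty$-contractivity of $\cev{\U^*}$ provided $(1\wedge|v|)\sgn v\in V$ and $\Re\,\cev{\fra^*}(t;(1\wedge|v|)\sgn v,(|v|-1)^+\sgn v)\ge 0$ for all $v\in V$ and a.e.\ $t\in[0,T]$. The only real work is to spell this out in terms of $\fra$: unwinding the definition of the returned adjoint form and conjugating, the inequality becomes $\Re\,a(T-t;(|v|-1)^+\sgn v,(1\wedge|v|)\sgn v)\ge 0$, and because $t\mapsto T-t$ is a measure-preserving bijection of $[0,T]$ this holds for a.e.\ $t$ exactly when $\Re\,a(t;(|v|-1)^+\sgn v,(1\wedge|v|)\sgn v)\ge 0$ for a.e.\ $t$, which is the hypothesis of part (1). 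Together with $(1\wedge|v|)\sgn v\in V$ this makes $\cev{\U^*}$ $L^\infty$-contractive, hence $\U$ is $L^1$-contractive.

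For part (2) I would use that complete contractivity means simultaneous $L^1$- and $L^\infty$-contractivity. The second displayed hypothesis $\Re\,a(t;(1\wedge|v|)\sgn v,(|v|-1)^+\sgn v)\ge 0$ is precisely the hypothesis of Proposition~\ref{proposition Linftycontractivity}(2) and gives $L^\infty$-contractivity of $\U$ directly, while the first hypothesis gives $L^1$-contractivity through part (1); the assumption $(1\wedge|v|)\sgn v\in V$ is what both criteria require. Combining the two conclusions yields complete contractivity.

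The step I expect to be delicate is the translation in the second paragraph: one must track carefully that passing from $\fra$ to $\cev{\fra^*}$ interchanges the two spatial arguments of the form (because of the adjoint) while simultaneously reflecting the time variable. It is exactly this interchange that explains why the $L^1$-hypothesis in part (1) carries $(|v|-1)^+\sgn v$ in the \emph{first} slot and $(1\wedge|v|)\sgn v$ in the \emph{second}, opposite to the order appearing in the $L^\infty$-criterion. Getting this ordering right, and verifying that the a.e.-in-$t$ condition survives the reflection $t\mapsto T-t$, is the heart of the argument; once it is in place, both assertions follow by a direct appeal to the two criteria.
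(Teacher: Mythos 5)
Your proposal is correct and follows essentially the same route as the paper, which proves this proposition via exactly the duality you invoke ($\U$ is $L^1$-contractive if and only if $\cev{\U^*}$ is $L^\infty$-contractive, through the identity~\eqref{eq:retadjform}) followed by an appeal to Proposition~\ref{proposition Linftycontractivity}(2) applied to the returned adjoint form. Your careful unwinding of the argument swap and of the time reflection $t\mapsto T-t$ --- steps the paper leaves implicit --- is accurate and correctly explains the ordering of the two slots in the hypothesis of part (1).
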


We can now give a sufficient condition for $L^p$-quasi-contractivity of the evolution family $\U$ that governs the Cauchy problem \eqref{evolution equation u(s)=x}. 


\begin{theorem}\label{theorem quasilpcontractivity} Let $\fra\in \Formm([0,T]; V,H)$ and let $p\in]1,\infty[$ be given. Assume that there exists a function $\hat\omega_p\in L^\infty(0,T)$ such that $(\mathcal T_t)_{t\in [0,T]}$ satisfies
\begin{equation}
\label{Eq1 them: theorem quasilpcontractivity}
\|e^{-r A(t)}f\|_{L^p(X)}\leq e^{r\hat\omega_p(t)}\|f\|_{L^p(X)}\qquad \hbox{for all }f\in L^2(X)\cap L^p(X),\ r\geq 0,\hbox{ and a.e. }t\in [0,T].
\end{equation}
Then the evolution family $\mathcal U$ associated with $\fra$ extrapolates to a consistent evolution family on $L^p(X)$ and 
 \begin{equation}
	\label{eq them:quasilpcontractivity EVF}
	\|U(t,s)f\|_{L^p(X)}\leq e^{\int_s^t\hat\omega_p(r)dr}\|f\|_{L^p(X)}\qquad \hbox{for all }f\in L^2(X)\cap L^p(X) \hbox{ and }(t,s)\in \Delta.
	\end{equation}
\end{theorem}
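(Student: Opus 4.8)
The plan is to mimic the proof of Proposition~\ref{prop:ues} via the product integral approximation, transferring the $L^p$-bound from the individual semigroups to the discretized evolution family $\U_\Lambda$ and then passing to the limit $|\Lambda|\to 0$ using Theorem~\ref{convergence forte de solution approchee}. First I would fix $[a,b]\subset[0,T]$ together with a partition $\Lambda$ as in~\eqref{eq:partition}, and introduce the averaged exponents
\[
\hat\omega_{p,k}:=\frac{1}{\lambda_{k+1}-\lambda_k}\int_{\lambda_k}^{\lambda_{k+1}}\hat\omega_p(r)\,\d r,\qquad k=0,1,\ldots,n.
\]
The key point is that the averaged operators $\A_k$ in~\eqref{eq:op-moyen integrale} inherit the $L^p$-bound~\eqref{Eq1 them: theorem quasilpcontractivity}: since $a_k$ is the time-average of $a(r)$, the generator $-\A_k$ should be shown to satisfy $\|e^{-r\A_k}f\|_{L^p}\le e^{r\hat\omega_{p,k}}\|f\|_{L^p}$. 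Granting this, I can apply it factor-by-factor to the product representation~\eqref{promenade1} to obtain, exactly as in~\eqref{eq3:expStability},
\[
\|U_\Lambda(b,a)f\|_{L^p(X)}\le\Big(\prod_{k}e^{(\lambda_{k+1}-\lambda_k)\hat\omega_{p,k}}\Big)\|f\|_{L^p(X)}=e^{\int_a^b\hat\omega_p(r)\,\d r}\|f\|_{L^p(X)}
\]
for all $f\in L^2(X)\cap L^p(X)$.

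The extrapolation and the final estimate~\eqref{eq them:quasilpcontractivity EVF} then follow by a density and limiting argument. For fixed $f\in L^2(X)\cap L^p(X)$ and $(t,s)\in\Delta$, Theorem~\ref{convergence forte de solution approchee} gives $U_\Lambda(t,s)f\to U(t,s)f$ in $H=L^2(X)$ as $|\Lambda|\to 0$; passing if necessary to a subsequence, $U_\Lambda(t,s)f\to U(t,s)f$ $\mu$-a.e.\ on $X$, so by Fatou's lemma
\[
\|U(t,s)f\|_{L^p(X)}\le\liminf_{|\Lambda|\to 0}\|U_\Lambda(t,s)f\|_{L^p(X)}\le e^{\int_s^t\hat\omega_p(r)\,\d r}\|f\|_{L^p(X)}.
\]
This shows that $U(t,s)$ maps the dense subspace $L^2(X)\cap L^p(X)$ of $L^p(X)$ continuously into $L^p(X)$ with norm at most $e^{\int_s^t\hat\omega_p(r)\,\d r}$; since $\hat\omega_p\in L^\infty(0,T)$ this bound is finite and uniform, so $U(t,s)$ extends uniquely to a bounded operator $U_p(t,s)$ on $L^p(X)$ satisfying the same bound. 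Consistency ($U_p(t,s)f=U_2(t,s)f$ on $L^2\cap L^p$) and the algebraic evolution-family identities hold by construction, exactly as in the Remark following Definition~\ref{submarkovian EVF}.

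The main obstacle I expect is the first step, namely verifying that the time-averaged generator $-\A_k$ still satisfies the $L^p$-contractivity estimate~\eqref{Eq1 them: theorem quasilpcontractivity}. This does not follow from a naive pointwise-in-time argument, because $e^{-r\A_k}$ is the semigroup generated by the \emph{average} of the operators $\A(r)$, not the average (or composition) of the semigroups $e^{-r\A(r)}$. The cleanest route is to characterize $L^p$-quasi-contractivity of each $\mathcal T_t$ in terms of a form inequality in the spirit of Proposition~\ref{proposition Linftycontractivity} and Proposition~\ref{prop:L1contractivity}: an estimate of the type $\Re a(t;\cdot,\cdot)\ge -\hat\omega_p(t)\|\cdot\|_{L^p}^{\,\cdot}$ tested against the appropriate $L^p$-duality map. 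Such a condition is \emph{linear} in $a(t)$, hence stable under the averaging~\eqref{eq:form-moyen integrale}: averaging~\eqref{Eq1 them: theorem quasilpcontractivity} (equivalently, its infinitesimal form) over $[\lambda_k,\lambda_{k+1}]$ yields precisely the corresponding inequality for $a_k$ with constant $\hat\omega_{p,k}$, and thus~\eqref{eq2:expStability}'s $L^p$-analogue for $e^{-r\A_k}$. Once this averaging stability of the infinitesimal $L^p$-criterion is established, the remainder is the routine product/limit argument above.
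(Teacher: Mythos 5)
Your proposal is correct and follows essentially the same route as the paper: the form-inequality characterization of $L^p$-quasi-contractivity that you identify as the key step is precisely Nittka's theorem \cite[Thm.~4.1]{Nit12}, which the paper applies in both directions (once to translate \eqref{Eq1 them: theorem quasilpcontractivity} into the condition $\Re a(t;u,|u|^{p-2}u)+\hat\omega_p(t)\|u\|_{L^p}^p\ge 0$ together with $P_{B^p}V\subset V$, and once, after averaging this linear-in-$a$ inequality over each subinterval, to recover $\|e^{-rA_k}\|_{\L(L^p)}\le e^{r\hat\omega_{k,p}}$). Your remaining steps --- the product estimate via \eqref{promenade1}--\eqref{promenade2}, the passage to the limit by Theorem~\ref{convergence forte de solution approchee} with Fatou's lemma, and the density-based extrapolation --- coincide with the paper's argument.
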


\begin{proof} The case where $\hat{\omega}_p\equiv 0$ follows directly from Proposition~\ref{cor:easytho}; the general case is slightly more delicate. First, applying \cite[Thm.~4.1]{Nit12} to the non-autonomous form $a(t,\cdot,\cdot)+\hat\omega_p(t)(\cdot\mid \cdot)_{L^2}$, we see that the assumption~\eqref{Eq1 them: theorem quasilpcontractivity} is equivalent to the following condition: $P_{B^p}V\subset V$ and for a.e.\ $t\in[0,T]$
	\begin{equation}\label{Eq2 them: theorem quasilpcontractivity} 
	\Re a(t;u,|u|^{p-2}u)+\hat\omega_p(t)\|u\|^p_{L^p}\geq 0\quad \hbox{ for all }u\in V\hbox{ s.t. }|u|^{p-2}u\in V.
	\end{equation}
	(Here $B^p$ denotes the $L^p$-unit ball and  $P_{B^p}$ is the projector of $L^2(X)$ onto $B^p$.)
	
Let now $[s,t]\subset [0,T]$ and let $\Lambda=(\lambda_0,\lambda_1,\ldots,\lambda_n)$ be a partition of $[s,t]$ and let $\fra_k: V\times V\to\C$, $k=0,1,\ldots,n$, be the family of bounded $H$-elliptic forms given by~\eqref{eq:form-moyen integrale} and $(e^{-rA_k})_{r\geq 0}$ be the associated $C_0$-semigroup. Furthermore, define the finite real sequence $\hat\omega_{k,p}$, $k=0,1,\ldots, n$, as follows 
\begin{equation}\label{eq:omega-moyen integrale-p}
\hat\omega_{k,p}:=\frac{1}{\lambda_{k+1}-\lambda_k}
\int_{\lambda_{k}}^{\lambda_{k+1}}\hat\omega_p(r) \d r \qquad k=0,1,\ldots,n.
\end{equation}
Then~\eqref{Eq2 them: theorem quasilpcontractivity} implies that for all $k=0,1,\ldots,n$	
	\begin{equation}\label{Eq3 them: theorem quasilpcontractivity} \Re a_k(u,|u|^{p-2}u)+\hat\omega_{k,p}\|u\|_{L^p}^p\geq 0,\quad\hbox{	for all }u\in V\hbox{ s.t. }|u|^{p-2}u\in V.
	\end{equation}
Again applying~\cite[Thm.~4.1]{Nit12} to the form $a_k+\hat\omega_{k,p}(\cdot\mid \cdot)_{L^2}$ we obtain
	\begin{equation}
	\label{Eq4 them: theorem quasilpcontractivity}
	\|e^{-rA_k}\|_{L^p(X)}\leq e^{r\hat\omega_{k,p}} \quad \hbox{for all }s\geq 0\hbox{ and } k=0,1,\ldots,n.
	\end{equation}
	Thus, using~\eqref{promenade1}-\eqref{promenade2} we find
	\begin{equation}
	\label{Eq5 them: theorem quasilpcontractivity}
	\|U_\Lambda(t,s)\|_{L^p(X)}\leq e^{\int_s^t\hat\omega_p(r)dr}\qquad \hbox{for all }(t,s)\in \Delta\hbox{ and each partition $\Lambda$ of }[s,t].
	\end{equation}
	Thus the desired estimate~\eqref{eq them:quasilpcontractivity EVF} follows from Theorem~\ref{convergence forte de solution approchee} and Fatou's Lemma.
\end{proof}

In a similar way we can discuss stochasticity, another feature that cannot be easily interpreted as an invariance property.
\begin{proposition} \label{them: stochastic EVF} The evolution family $\U$ associated with $\fra\in\Formm([0,T];V,H)$ is stochastic provided $(\Re u)^+\in V$, $\Re a(t,\Re u,\Im u)\in \mathbb R$, $\Re a(t;(\Re u)^+,(\Re u)^-)\le 0$, ${\bf 1}\in V$, and $a(t;{\Re u},{\bf 1})=0$ for all $v\in V$ and a.e.\ $t\in[0,T]$.
\end{proposition}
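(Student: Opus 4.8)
The plan is to separate the two constituents of stochasticity — positivity, and the $L^1$-norm identity on the positive cone — and to obtain the latter through the product integral approximation of Theorem~\ref{convergence forte de solution approchee}. First observe that the hypotheses $(\Re u)^+\in V$, $\Re a(t;\Re u,\Im u)\in\R$ and $\Re a(t;(\Re u)^+,(\Re u)^-)\le 0$ are exactly those of Proposition~\ref{proposition Linftycontractivity}(1); hence $\U$ is already known to be positive, and in particular real. It also matters that ${\bf 1}\in V\subset H=L^2(X)$ forces $\mu(X)<\infty$: on such a finite measure space one has $L^2(X)\hookrightarrow L^1(X)$, and $\int_X g\,\d\mu=(g\mid{\bf 1})$ for every $g\in L^2(X)$. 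This identity is what will let us read off the $L^1$-norm from an $H$-inner product, and it is the point at which strong $L^2$-convergence becomes strong enough.

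Next I would exploit the condition $a(t;\Re u,{\bf 1})=0$. Writing $v=\Re v+i\Im v$ (with $\Re v,\Im v\in V$ by the first hypothesis) and applying the assumption to $u=v$ and to $u=-iv$, so that $\Re u$ equals $\Re v$ and $\Im v$ respectively, sesquilinearity in the first argument gives $a(t;v,{\bf 1})=a(t;\Re v,{\bf 1})+i\,a(t;\Im v,{\bf 1})=0$ for every $v\in V$ and a.e.\ $t$. For a partition $\Lambda$ the averaged forms $a_k$ of~\eqref{eq:form-moyen integrale} therefore satisfy $a_k(v,{\bf 1})=0$ for all $v\in V$, which means ${\bf 1}\in D(A_k^*)$ with $A_k^*{\bf 1}=0$, and hence $e^{-rA_k^*}{\bf 1}={\bf 1}$ for all $r\ge0$. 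Consequently each generating semigroup preserves the pairing with ${\bf 1}$, namely $(e^{-rA_k}g\mid{\bf 1})=(g\mid e^{-rA_k^*}{\bf 1})=(g\mid{\bf 1})$, and composing the factors in~\eqref{promenade1}--\eqref{promenade2} yields
\begin{equation*}
(U_\Lambda(t,s)f\mid{\bf 1})=(f\mid{\bf 1})\qquad\text{for all }f\in H\text{ and }(t,s)\in\Delta.
\end{equation*}

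Finally I would pass to the limit $|\Lambda|\to0$. Fix $0\le f\in L^2(X)\cap L^1(X)$. Since ${\bf 1}\in H$, Theorem~\ref{convergence forte de solution approchee} gives $(U_\Lambda(t,s)f\mid{\bf 1})\to(U(t,s)f\mid{\bf 1})$, so the displayed identity persists in the limit: $(U(t,s)f\mid{\bf 1})=(f\mid{\bf 1})=\|f\|_{L^1}$. Because $\U$ is positive, $U(t,s)f\ge0$, whence $(U(t,s)f\mid{\bf 1})=\int_X U(t,s)f\,\d\mu=\|U(t,s)f\|_{L^1}$, and the two computations combine to the required norm identity $\|U(t,s)f\|_{L^1}=\|f\|_{L^1}$. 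Sub-stochasticity then follows at once: positivity yields the pointwise domination $|U(t,s)g|\le U(t,s)|g|$, and the norm identity applied to $|g|\ge0$ gives $\|U(t,s)g\|_{L^1}\le\|\,|g|\,\|_{L^1}=\|g\|_{L^1}$, i.e.\ $L^1$-contractivity. Thus $\U$ is positive, $L^1$-contractive and $L^1$-norm preserving on the positive cone, hence stochastic.

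The step I expect to be the crux is the limit passage: the strong convergence of $U_\Lambda(t,s)f$ holds only in $L^2(X)$ and by itself conveys no information about $L^1$-norms. What rescues the argument is the remark that ${\bf 1}\in V$ compels $\mu(X)<\infty$, so that on the positive cone the $L^1$-norm coincides with the $H$-inner product against ${\bf 1}$ — a weakly continuous functional — and the purely algebraic identity for $U_\Lambda$ survives in the limit. Everything else (positivity from Proposition~\ref{proposition Linftycontractivity}, invariance of ${\bf 1}$ under each averaged adjoint semigroup, and the reduction of $L^1$-contractivity to the norm identity) is routine once this observation is in place.
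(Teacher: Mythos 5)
Your proof is correct and follows essentially the route the paper intends (the proposition is stated without a written proof, introduced by ``in a similar way'', i.e.\ via the product-integral scheme used for Theorem~\ref{theorem quasilpcontractivity}): the hypotheses pass to the averaged forms $a_k$ of~\eqref{eq:form-moyen integrale}, each factor semigroup preserves the pairing with ${\bf 1}$ because $a_k(\cdot,{\bf 1})=0$ yields ${\bf 1}\in D(A_k^*)$ with $A_k^*{\bf 1}=0$, and the identity survives the limit of Theorem~\ref{convergence forte de solution approchee} precisely because ${\bf 1}\in L^2(X)$ --- your observation that ${\bf 1}\in V$ forces $\mu(X)<\infty$ is exactly the point that makes strong $L^2$-convergence sufficient. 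Your extension from $a(t;\Re u,{\bf 1})=0$ to $a(t;v,{\bf 1})=0$ via $u=v$ and $u=-iv$, and your reduction of $L^1$-contractivity to positivity plus the norm identity on the positive cone (using $|U(t,s)g|\le U(t,s)|g|$), are both sound.
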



Our last result in this section is devoted to the issue of domination of evolution families.

\begin{proposition}\label{prop:domin}
Let $\fra\in \Formm([0,T];V,H)$ and denote as usual by $\U$ 
the associated evolution family.
Let furthermore $W$ be a separable Hilbert space that is densely and continuously embedded in $H$ and ${\mathfrak b}\in \Formm([0,T];W,H)$: we denote by $\V$
the associated evolution family.
Assume that 
\begin{itemize}
\item $\Re u\in V$ and $(\Re u)^+\in V$ for all $u\in V$;
\item $V$ is a \textit{generalized ideal} of $W$, i.e.,
\begin{itemize}
\item $u\in W$ implies $|u|\in W$ and
\item $u_1\in V$ and $u_2\in W$ are such that $|u_2|\le |u_1|$, then $u_2 \sgn u_1\in V$;
\end{itemize}
\item $\Re a(t;\Re u,\Im u)\in \R$ for all $u\in V$;
\item $\Re a(t;(\Re u)^+,(\Re u)^-)\le 0$ for all $u\in V$;
\item $\Re a(t;u,v)\ge b(t;|u|,|v|)$ for all $u,v\in V$ s.t.\ $u\overline{v}\ge 0$.
\end{itemize}
Then $\U$ is dominated by $\V,$ i.e.,
\begin{equation}
\label{Def: dominatioon EVF}
|U(t,s)f|\leq V(t,s)|f|\qquad \hbox{for all }(t,s)\in \Delta \hbox{ and }f\in H.
\end{equation} 

\end{proposition}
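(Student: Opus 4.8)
The plan is to reduce the non-autonomous domination to the corresponding \emph{autonomous} statement at frozen time, and then to propagate it through the product-integral approximation furnished by Theorem~\ref{convergence forte de solution approchee}. Throughout, let $(B(t))_{t\in[0,T]}$ denote the operator family associated with $\mathfrak b$, let $\mathcal S_t:=\{e^{-rB(t)}\mid r\ge0\}$ be the corresponding holomorphic semigroups on $H$, and recall that $\mathcal V$ is the evolution family associated with $\mathfrak b$.

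First I would freeze $t$. The listed hypotheses are precisely those of Ouhabaz' autonomous domination criterion (in the non-accretive version, cf.\ \cite{Ouh05,ManVogVoi05}): the first four items encode the lattice structure of $V$, the generalized-ideal relation between $V$ and $W$, and (through Proposition~\ref{proposition Linftycontractivity}) reality and positivity of the frozen objects, while the last item is the form-domination inequality. Applying this criterion to the pair $(a(t),b(t))$ for a.e.\ $t\in[0,T]$ yields
\begin{equation*}
|e^{-rA(t)}f|\le e^{-rB(t)}|f|\qquad\text{for all }f\in H,\ r\ge0,
\end{equation*}
together with positivity of the dominating semigroup $\mathcal S_t$. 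Indeed, once the displayed inequality holds, testing it on a nonnegative $f$ forces $e^{-rB(t)}f\ge|e^{-rA(t)}f|\ge0$, so positivity of $\mathcal S_t$ comes for free, and this positivity is exactly what makes the subsequent composition work.

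Next I would transfer this to the averaged forms. For a partition $\Lambda$ of a subinterval $[s,t]$ with nodes $\lambda_k$, integrating the pointwise form-domination inequality and the pointwise positivity/reality conditions over each cell $[\lambda_k,\lambda_{k+1}]$ shows that the averaged forms $a_k$ of \eqref{eq:form-moyen integrale} and the analogous averages $b_k$ again satisfy the hypotheses of the autonomous criterion; hence $|e^{-rA_k}f|\le e^{-rB_k}|f|$ with $e^{-rB_k}$ positive, where $B_k$ is the operator associated with $b_k$. The discretized families $\mathcal U_\Lambda$ and $\mathcal V_\Lambda$, built from these semigroups exactly as in \eqref{promenade1}--\eqref{promenade2} over the same nodes and with the same exponents, are finite products of the $e^{-rA_k}$ and of the $e^{-rB_k}$ respectively. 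Because each dominating factor $e^{-rB_k}$ is a positive, hence order-preserving, operator, the lattice inequality can be multiplied through the product: peeling the factors off one at a time and using order-preservation of the accumulated dominating factors at each stage gives, by a straightforward induction,
\begin{equation*}
|U_\Lambda(t,s)f|\le V_\Lambda(t,s)|f|\qquad\text{for all }f\in H,\ (t,s)\in\Delta.
\end{equation*}

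Finally I would let $|\Lambda|\to0$. By Theorem~\ref{convergence forte de solution approchee} applied to $\fra$ and to $\mathfrak b$, we have $U_\Lambda(t,s)f\to U(t,s)f$ and $V_\Lambda(t,s)|f|\to V(t,s)|f|$ strongly in $H=L^2(X)$; since the modulus is continuous on $L^2$ and strong convergence entails a.e.\ convergence along a subsequence, the inequality above survives the limit and delivers \eqref{Def: dominatioon EVF}. The main obstacle is the frozen-time/composition interplay rather than the limit: one must be sure that the dominating semigroups $\mathcal S_t$, and hence the $e^{-rB_k}$, are genuinely positive, for their order-preservation is precisely what allows a \emph{pointwise} (lattice) domination to be pushed through an operator \emph{product}; the strong-limit step is then routine.
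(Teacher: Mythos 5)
Your proposal is correct and takes essentially the same route as the paper's proof: the paper likewise averages both forms over the cells of a partition, invokes the autonomous domination criterion of~\cite[Thm.~4.1]{ManVogVoi05} for the averaged pairs $(a_k,b_k)$ to get $|e^{-rA_k}f|\le e^{-rB_k}|f|$, concludes that $\V_\Lambda$ dominates $\U_\Lambda$ (your positivity-based induction through the product is exactly what this one-line step amounts to), and passes to the limit via Theorem~\ref{convergence forte de solution approchee}. Your frozen-time preliminary is redundant but harmless, and your explicit remarks on why positivity of the dominating factors comes for free and why the inequality survives the strong limit merely flesh out what the paper leaves implicit.
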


\begin{proof} Let $\Lambda$ be a partition of $[0,T].$ Define the piecewise constant $\frb_\Lambda\in \Formm([0,T]; W,H)$ via formulae which are analogous to~\eqref{eq:form-moyen integrale} and~\eqref{form: approximation formula2} and let $\V_\Lambda$ be the associated evolution family. 
By~\cite[Thm.~4.1]{ManVogVoi05} we see that the semigroup associated with the averaged forms $\frb_\Lambda$ dominate $\mathcal T_k$, hence that $\V_\Lambda$ dominates $\U_\Lambda$.
By Theorem \ref{convergence forte de solution approchee} we conclude that $\U$ is dominated by $\V.$ 
\end{proof}

\section{Ultracontractivity}\label{sec:ultra}

In this and the next section we are going to restrict to the case of $H=L^2(X)$, where $X$ an $\sigma$-finite measurable space. Recall that a $C_0$-semigroup $\mathcal S$ on $L^2(X)$ is said to be \emph{ultracontractive} if there exist constants $c_0,n>0$, and $\tilde\omega\in\mathbb R$ such that 
\begin{equation}
\label{Def: ultracontractivity semigroup}
\|S(r)f\|_{L^\infty(X)}\leq c_0r^{-\frac{n}{2}} e^{r\tilde\omega}\|f\|_{L^1(X)}\qquad \hbox{ for all } r\geq 0\hbox{ and all }f\in L^2(X)\cap L^1(X).
\end{equation}

In this section we are going to develop a theory of ultracontractive evolution families.


\begin{definition}\label{def:ultracont}
	We call an evolution family $\U$ on $L^2(X)$ \emph{ultracontractive} if there exist constants $c_0,n>0$ and $\tilde\omega\in\mathbb R$ such that
	\begin{equation}\label{eq: L1linfty}
	\|U(t,s)f\|_{L^\infty(X)}\leq c_0(t-s)^{-\frac{n}{2}}
	 e^{(t-s)\tilde\omega}\|f\|_{L^1(X)}
	\qquad \hbox{ for all }(t,s)\in\Delta\hbox{ and }f\in L^2(X)\cap L^1(X).
	\end{equation}
\end{definition}

By a direct consequence of the Kantorovitch--Vulikh Theorem, see e.g. \cite[Theorem 1.3]{AreBuk94}, any ultracontractive evolution family $\U$ is given by an integral kernel: more precisely, there exists a family $(\Gamma(t,s))_{(t,s)\in \Delta}\subset L^\infty(\Omega\times \Omega)$ such that
\[
U(t,s)f(x)=\int_{\Omega} \Gamma(t,s;x,y)f(y)\ dy\qquad \hbox{for all }(t,s)\in \Delta, \ f\in L^2(X)\cap L^1(X)\hbox{ and a.e.\ }x\in \Omega,
\]
with
\[
\|U(t,s)\|_{\L(L^1(X),L^\infty(X))}=\|\Gamma(t,s)\|_{L^\infty(\Omega\times\Omega)},\qquad \hbox{for all } (t,s)\in \Delta.
\]

It is well-known that
ultracontractivity of semigroups can be deduced from the Nash or Gagliardo--Nirenberg inequalities for the domain of the associated form, see~\cite[Chapt.~6]{Ouh05}. We are going to extend this result to the non-autonomous setting. For that we need the following definitions.

\begin{definition}
Let $V$ be a subspace of $L^2(X)$. The space $V$ is said to satisfy 

(i) a \emph{Nash inequality} if there exist constants $C_N,\mu>0$ such that 
\begin{equation}
\label{Nash inequality} \|u\|_{L^2(X)}^{2+\frac{4}{\mu}}\leq C_N\|u\|_V^2\|u\|_{L^1(X)}^{\frac{4}{\mu}}\qquad \hbox{for all }u\in L^1(X)\cap V;
\end{equation}

(ii) a \emph{Gagliardo--Nirenberg inequality} 
if there exist constants $C_G, N>0$ such that 
\begin{equation}\label{Gagliardo--Nirenberg inequality} 
\|u\|_{L^q(X)}\leq C_G \|u\|_{L^2(X)}^{1-N\frac{q-2}{2q}}\|u\|_{V}^{N\frac{q-2}{2q}}\quad\hbox{for all }u\in V
\end{equation}
holds for all $q\in ]2,\infty[$ such that $N\frac{q-2}{2q}\leq 1$.
\end{definition}
Sobolev spaces $H^1(I)$ on intervals $I\subset \R$ satisfy e.g.\ the Nash inequality, see e.g.~\cite[\S~1.4.8]{Maz85}. More generally, the same is true for each closed subspace $V$ of $H^1(X)$ which has the $L^1-H^1$-extension property \cite[Lemma 2.7]{AreEls97}, where $\Omega$ is an arbitrary open set of $\R^d$. Several geometric conditions on $\Omega\subset \R^d$ under which a Sobolev space $V=H^k(\Omega)$ satisfies a Gagliardo--Nirenberg inequality are known, see e.g.~\cite[Chapter 5]{AdaFou03}.

Here and in the following, we are adopting the usual notations introduced in~\eqref{measurability}--\eqref{ellipticity}.
\begin{theorem}\label{Theorem: Ultracontractivity via Nash ineq} 
Let $\fra\in \Formm([0,T];V,H)$ such that the associated evolution family $\U$ is completely quasi-contractive with constant $\tilde\omega\in\R$. If $V$ satisfies a Nash inequality \eqref{Nash inequality} for some constants $\mu,C_N>0$, then $\U$ is ultracontractive and 
	\begin{equation}\label{equ: Ultracontractivity via Nash ineq}
\|U(t,s)\|_{\L(L^1(\Omega),\L^\infty(\Omega))}\leq \Big(\frac{\mu C_N}{4\alpha}\Big)^{\frac{\mu}{2}}(t-s)^{-\frac{\mu}{2}}e^{\max\{\omega,\tilde\omega\}(t-s)}\quad\hbox{for	all }(t,s)\in\Delta.
\end{equation}
\end{theorem}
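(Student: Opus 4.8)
The plan is to reduce the non-autonomous ultracontractivity estimate to its well-known autonomous counterpart via the product-integral approximation, exactly as in the proofs of Proposition~\ref{prop:ues} and Theorem~\ref{theorem quasilpcontractivity}. The key observation is that each averaged form $a_k$ in~\eqref{eq:form-moyen integrale} inherits both the $H$-ellipticity constant $\alpha$ (by integrating~\eqref{ellipticity}) and the Nash inequality~\eqref{Nash inequality}, since the latter is a property of the space $V$ alone and does not depend on the form. Therefore the autonomous semigroup $(e^{-rA_k})_{r\ge 0}$ falls under the classical theory, and I can quote a Nash-to-ultracontractivity result to obtain a uniform bound on each factor in the product~\eqref{promenade1}.

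\emph{First} I would recall the autonomous result. If a form $b$ on $V$ is $H$-elliptic with constant $\alpha$ and its associated semigroup is $L^1$- and $L^\infty$-contractive (hence completely contractive), and $V$ satisfies the Nash inequality with constants $\mu, C_N$, then the classical Nash argument (see~\cite[Chapt.~6]{Ouh05}) gives the ultracontractive bound
\[
\|e^{-rB}\|_{\L(L^1,L^\infty)}\le \Big(\frac{\mu C_N}{4\alpha}\Big)^{\frac{\mu}{2}} r^{-\frac{\mu}{2}},\qquad r>0.
\]
Here the constant is governed precisely by $\alpha$ through the coercivity estimate $\Re b(u,u)\ge \alpha\|u\|_V^2$ that drives the differential inequality for $\|e^{-rB}f\|_{L^2}^2$. \emph{Second}, I would apply this to each $a_k$ (after rescaling to absorb $\tilde\omega$ as in Definition~\ref{submarkovian EVF}(e), so that the rescaled semigroups are completely contractive), obtaining for each $k$ and each $r\ge 0$
\[
\|e^{-rA_k}\|_{\L(L^1,L^\infty)}\le \Big(\tfrac{\mu C_N}{4\alpha}\Big)^{\frac{\mu}{2}} r^{-\frac{\mu}{2}}\,e^{r\max\{\omega,\tilde\omega\}}.
\]

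\emph{Third}, I would combine the factors along a partition. The subtlety is that the naive product of the single-step ultracontractive bounds would multiply the prefactors $(\mu C_N/4\alpha)^{\mu/2}$ $n$ times, which blows up. The standard remedy is to use ultracontractivity only on the \emph{first} (or last) time-step and then propagate with $L^\infty$- (resp.\ $L^1$-) contractivity of the remaining steps: writing $U_\Lambda(t,s)$ as in~\eqref{promenade1}, I estimate the $L^1\to L^\infty$ norm by applying the $L^1\to L^\infty$ bound to a single semigroup factor spanning the whole interval length $t-s$, bounding the intermediate factors by their complete contractivity. More precisely, using the evolution property together with the fact that each $e^{-rA_k}$ is $L^1$- and $L^\infty$-contractive after rescaling, one shows the $L^1\to L^\infty$ norm of the product is controlled by $\big(\tfrac{\mu C_N}{4\alpha}\big)^{\mu/2}(t-s)^{-\mu/2}e^{(t-s)\max\{\omega,\tilde\omega\}}$, uniformly in $\Lambda$. \textbf{The main obstacle} is precisely this bookkeeping: getting the $(t-s)^{-\mu/2}$ scaling with a single prefactor, rather than an $n$-fold product, requires running the Nash differential inequality across the full interval for the discretized evolution family, or equivalently re-deriving the Nash estimate directly for $U_\Lambda(t,s)$ using that $\Re a_\Lambda(r;u,u)\ge \alpha\|u\|_V^2$ (up to the $\max\{\omega,\tilde\omega\}$ shift) holds for a.e.\ $r$.

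\emph{Finally}, I would pass to the limit. Having established
\[
\|U_\Lambda(t,s)\|_{\L(L^1,L^\infty)}\le \Big(\tfrac{\mu C_N}{4\alpha}\Big)^{\frac{\mu}{2}}(t-s)^{-\frac{\mu}{2}}e^{(t-s)\max\{\omega,\tilde\omega\}}
\]
uniformly in the partition $\Lambda$, the convergence $U_\Lambda(t,s)\to U(t,s)$ strongly in $\L(H)$ from Theorem~\ref{convergence forte de solution approchee}, together with lower semicontinuity of the operator norm under strong convergence (Fatou-type argument, exactly as at the end of Proposition~\ref{prop:ues} and Theorem~\ref{theorem quasilpcontractivity}), yields the desired estimate~\eqref{equ: Ultracontractivity via Nash ineq} for $U(t,s)$ itself. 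This simultaneously shows that $U(t,s)$ maps $L^1(X)\cap L^2(X)$ boundedly into $L^\infty(X)$ with the stated norm, which is exactly the ultracontractivity of $\U$ in the sense of Definition~\ref{def:ultracont}, with $n=\mu$, $c_0=(\mu C_N/4\alpha)^{\mu/2}$, and $\tilde\omega$ replaced by $\max\{\omega,\tilde\omega\}$.
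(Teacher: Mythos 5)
There is a genuine gap, and it has two parts. \emph{First}, a hypothesis mismatch: the theorem assumes complete quasi-contractivity of the evolution family $\U$, whereas your discretized argument needs complete contractivity of each rescaled averaged semigroup $(e^{-rA_k})_{r\ge 0}$ (equivalently of $\U_\Lambda$), and this does not follow from the hypothesis. The invariance criterion of Proposition~\ref{cor:easytho} only goes in the direction (semigroup invariance for a.e.\ $t$) $\Rightarrow$ (invariance for $\U$); there is no converse that would let you descend from a property of $\U$ to the semigroups of the averaged forms $a_k$. The paper avoids the discretization entirely at this point: since $t\mapsto U(t,s)f$ belongs to $MR(s,T;V,V')$, the product rule~\eqref{poduct rule classique} applies directly to $\|U(\cdot,s)f\|_{L^2}^2$, and the Nash differential inequality is run on $U(t,s)$ itself, using only the $L^1$-contractivity of the (rescaled) family $\U$ — which is exactly what is assumed. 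Your fallback remedy ("re-derive the Nash estimate directly for $U_\Lambda$") is in spirit the same computation, but it still consumes contractivity of $\U_\Lambda$ rather than of $\U$, so the detour through Theorem~\ref{convergence forte de solution approchee} buys nothing and costs a hypothesis you do not have.

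\emph{Second}, and more seriously: the Nash differential inequality only ever yields the bound $\|U(t,s)\|_{\L(L^1,L^2)}\le (\mu C_N/4\alpha)^{\mu/4}(t-s)^{-\mu/4}$, i.e.\ the $L^1\to L^2$ half. Neither of your proposed mechanisms produces the missing $L^2\to L^\infty$ half: applying ultracontractivity to a single factor of the product~\eqref{promenade1} gives $r_k^{-\mu/2}$ with $r_k$ the length of one subinterval rather than $(t-s)^{-\mu/2}$ (as you yourself flag), and running the differential inequality over the full interval again controls only the $L^2$ norm from $L^1$ data. In the autonomous case the second half comes by duality with the adjoint semigroup; for evolution families this fails because $U(t,s)^*$ is not part of a forward evolution family associated with a form in $\Formm([0,T];V,H)$. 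The paper's resolution is the \emph{returned adjoint form} $\cev{\fra^*}$: the identical $L^1\to L^2$ argument applies to its evolution family $\cev{\U^*}$, and the identity~\eqref{eq:retadjform} converts that estimate into the $L^2\to L^\infty$ bound for $U(t,s)$; the evolution law then composes the two halves. Some such duality step is indispensable — in your discrete framework the analogue would be to observe that $U_\Lambda(t,s)^*$ is the reversed product of the adjoint semigroups $e^{-rA_k^*}$, i.e.\ precisely the discretization of $\cev{\fra^*}$ — but it is entirely absent from the proposal, so as written the argument cannot reach the $\L(L^1,L^\infty)$ estimate~\eqref{equ: Ultracontractivity via Nash ineq}.
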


\begin{proof}The first part of the proof is similar to that of \cite[Prop.~3.8]{AreEls97}. Upon rescaling $U(t,s)$ by $e^{-\max\{\omega,\tilde{\omega}\}(t-s)}$ we can without loss of generality assume both $\fra$ to be coercive and the evolution family $\U$ to be completely contractive.  Let $f\in L^1(X)\cap V$ and let $s\in[0,T[$ be fixed. If $y\in \MaxR (V,V')$ then
$\|y(\cdot)\|^2_{H}\in W^{1,1}(s,T;V')$ and
\begin{equation}\label{poduct rule classique}
\frac{\d}{\d t}\Vert y(\cdot)\Vert^2_H=2\Re\langle\dot y(\cdot),y(\cdot)\rangle
\end{equation}
by~\cite[Prop.~III.1.2]{Sho97}: accordingly, using ~\eqref{ellipticity} and since $t\mapsto U(t,s)f\in MR(s,T;V,V')$ we obtain that for all $f\in V\cap L^1(X)$ and a.e.\ $(t,s)\in \Delta$
\begin{align*}
\frac{\partial }{\partial t}\|U(t,s)f\|_{L^2(X)}^2&=2\Re \langle\frac{\partial }{\partial t} U(t,s)f, U(t,s)f\rangle
\\&=-2\Re \langle \A(t)U(t,s)f,U(t,s)f \rangle
\\&=-2a(t;U(t,s)f,U(t,s)f)\\
&\leq -2\alpha \|U(t,s)f\|_{V}^2 \\
&\leq -\frac{2\alpha}{C_N}\|U(t,s)f\|_{L^2(X)}^{2+\frac{4}{\mu}}\|U(t,s)f\|_{L^1(X)}^{-\frac{4}{\mu}}.
\end{align*} 
It follows that
\begin{align*}
\frac{\partial }{\partial t}\Big(\|U(t,s)f\|_{L^2(X)}^2\Big)^{-\frac{2}{\mu}}&=
-\frac{2}{\mu}\|U(t,s)f\|_{L^2(X)}^{-2-\frac{4}{\mu}} \frac{\partial }{\partial t}\| U(t,s)f\|_{L^2(X)}^2\ge\frac{4\alpha}{\mu C_N} \|U(t,s)f\|_{L^1(X)}^{-\frac{4}{\mu}}	
\\&\geq \frac{4\alpha}{\mu C_N} \|f\|_{L^1(X)}^{-\frac{4}{\mu}}
\end{align*}
since $\U$ is completely contractive. Integrating this inequality between $s$ and $t$ we find 
\begin{equation}\label{eq: proof Thm 1-infty}
\|U(t,s)\|_{\L(L^1(X),L^2(X))}\leq \Big(\frac{\mu C_N}{4\alpha}\Big)^\frac{\mu}{4}(t-s)^{-\frac{\mu}{4}}	\quad \hbox{ for all }(t,s)\in\Delta.
\end{equation}
In order to obtain the $L^2-L^\infty$-bound and thus prove the claimed ultracontractivity we will use the returned adjoint form $\cev{\fra^*}$ introduced in Section~\ref{sec:lattice}. In fact, arguing as in the first part of the proof we find that the evolution family $\cev{U^*}$ associated with $\cev{\fra^*}$ satisfies~\eqref{eq: proof Thm 1-infty} with the same bound. Then using the identity~\eqref{eq:retadjform} we conclude that 
	\begin{equation}
	\|U(t,s)\|_{\L(L^2(X),\L^\infty(X))}\leq \Big(\frac{\mu C_N}{4\alpha}\Big)^{\frac{\mu}{4}}(t-s)^{-\frac{\mu}{4}}\quad\hbox{for	all }(t,s)\in\Delta.
	\end{equation}
Finally, the evolution law satisfied by $\U$ completes the proof.
\end{proof}

	



%
\begin{definition}\label{def:linearlylpc-new}
An evolution family $\U$ on $L^2(X)$ is called \emph{linearly quasi-contractive} if for some constants $\alpha_1,\alpha_2$ independent of $p$
	\begin{equation}
	\label{eq them:quasilpcontractivity C0S-2}
	\|U(t,s)f\|_{L^p(X)}\leq e^{(t-s)(\alpha_1+p\alpha_2)}\|f\|_{L^p(X)}
	\end{equation}
for all $(t,s)\in\Delta,\ f\in L^2(X)\cap L^p(X), \hbox{ and } p\in [2,\infty[.$

\end{definition}

Linear $L^p$-quasi-contractivity turns out to be a key notion when it comes to checking ultracontractivity when the domain of the form satisfies a Gagliardo--Nirenberg inequality. Indeed, in this case we have the following result.
\begin{theorem}\label{Thm2: Ultracontractivity} 
Let $\fra\in \Formm([0,T];V,H)$.
Assume that $\U$ and $\cev{\U^*}$ are both linearly quasi-contractive with constants $\alpha_1,\alpha_2,\alpha_1^\ast,\alpha_2^\ast$. If $V$ satisfies a Gagliardo--Nirenberg inequality for some $C_G,N>0$, then $\U$ is ultracontractive and we have 
	\begin{equation}\label{2Ultracontractivity for closed form}
	\|U(t,s)\|_{\L(L^{1}(X),L^{\infty}(X))}\leq C_G^{\frac{N}{2}}\alpha^{-\frac{N^2}{4(N+2)}}e^{\bar\omega(t-s)}(t-s)^{-\frac{N^2}{2(N+2)}}\quad \hbox{for	all }(t,s)\in\Delta,
	\end{equation}
	where 
\[
\bar\omega:=\omega+\alpha_1+\alpha_1^*+\frac{2(N+2)}{N}[\alpha_2+\alpha_2^*].
\]
\end{theorem}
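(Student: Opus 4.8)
The plan is to run the programme of Theorem~\ref{Theorem: Ultracontractivity via Nash ineq}, but with the single Nash-type differential inequality replaced by an iteration fed by the Gagliardo--Nirenberg inequality, and with complete contractivity (which is \emph{not} assumed here) replaced by the family of $L^p$-bounds built into linear quasi-contractivity. As in the earlier proofs I would first rescale $U(t,s)$ by $e^{-\omega(t-s)}$ so as to assume $\fra$ coercive (Remark~\ref{remark-rescaling}), and then work with the piecewise-constant forms $\fra_\Lambda$: on each subinterval $\U_\Lambda$ is a genuine $C_0$-semigroup $e^{-rA_k}$, so that $r\mapsto\|U_\Lambda(r,s)f\|_{L^p}^p$ is differentiable and the characterisation of $L^p$-quasi-contractivity through~\cite[Thm.~4.1]{Nit12}---already exploited in Theorem~\ref{theorem quasilpcontractivity}---is available for every $A_k$. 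All estimates are to be proved for $\U_\Lambda$ and transferred to $\U$ at the end by Theorem~\ref{convergence forte de solution approchee} together with Fatou's Lemma.

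The cornerstone is a smoothing estimate from $L^2$ into $L^q$. Writing $u=U_\Lambda(\cdot,s)f$ and differentiating as in~\eqref{poduct rule classique}, coercivity gives $\frac{\d}{\d r}\|u(r)\|_{L^2}^2\le-2\alpha\|u(r)\|_V^2$, hence the space--time bound $2\alpha\int_s^t\|u(r)\|_V^2\,\d r\le\|f\|_{L^2}^2$. Feeding this into the Gagliardo--Nirenberg inequality converts control of $\|u\|_V$ into control of $\|u\|_{L^q}$, and a mean-value argument furnishes a time $\rho\in[s,t]$ with $\|u(\rho)\|_{L^q}\le C_G(2\alpha(t-s))^{-\theta/2}\|f\|_{L^2}$, where $\theta=N\frac{q-2}{2q}$. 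Transporting from $\rho$ to $t$ by the $L^q$-quasi-contractivity bound~\eqref{eq them:quasilpcontractivity C0S-2} then yields
\[
\|U_\Lambda(t,s)\|_{\L(L^2(X),L^q(X))}\le C_G(2\alpha)^{-\frac{\theta}{2}}(t-s)^{-\frac{\theta}{2}}e^{(t-s)(\alpha_1+q\alpha_2)}.
\]

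Next I would bootstrap this inequality, together with its analogue for the returned adjoint form $\cev{\fra^*}$ (with constants $\alpha_1^*,\alpha_2^*$), up the scale of $L^q$-spaces in the manner of a Moser iteration, the two families $\U$ and $\cev{\U^*}$ jointly playing the role that self-adjointness plays in the symmetric theory; at each stage the passage between consecutive integrability levels is effected by the $L^q$-quasi-contractivity bound, and it is the \emph{linear} growth in $q$ in~\eqref{eq them:quasilpcontractivity C0S-2} that keeps the accumulated exponential weights finite. Carrying out the bookkeeping along shrinking subintervals of total length $\tfrac12(t-s)$, the exponential prefactors combine into $\exp(\frac{2(N+2)}{N}\alpha_2(t-s))$ and the singular factors into $(t-s)^{-N^2/4(N+2)}$, producing an $L^2\to L^\infty$ bound for $\U_\Lambda$; the dual argument, via the identity~\eqref{eq:retadjform} and exactly as in Theorem~\ref{Theorem: Ultracontractivity via Nash ineq}, gives the matching $L^1\to L^2$ bound. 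Composing $U_\Lambda(t,s)=U_\Lambda(t,\tfrac{t+s}{2})U_\Lambda(\tfrac{t+s}{2},s)$, letting $|\Lambda|\to0$, and undoing the rescaling then assembles all constants into $\bar\omega$ and yields~\eqref{2Ultracontractivity for closed form}.

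The delicate point is the iteration. In contrast with the symmetric (Dirichlet-form) situation there is no inequality bounding $\Re a(t;u,|u|^{p-2}u)$ below by a multiple of $\||u|^{p/2}\|_V^2$, so the energy method is only at one's disposal at the level $p=2$; the gain in integrability must therefore be carried entirely by the $L^q$-quasi-contractivity bounds rather than by the form itself, and one must verify that the intermediate $L^q$-norms remain finite and that the infinite product of smoothing and exponential factors converges. Checking that the bookkeeping of the shrinking times simultaneously delivers the exponent $-\frac{N^2}{2(N+2)}$ and the coefficient $\frac{2(N+2)}{N}$ of $\alpha_2+\alpha_2^*$ is where the real effort lies; the failure of self-adjointness, by contrast, is disposed of cleanly, as before, by the returned-adjoint device.
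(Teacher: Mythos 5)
Your overall architecture matches the paper's: rescale, prove an $L^2\to L^q$ smoothing bound from Gagliardo--Nirenberg plus the $L^2$-energy identity and linear quasi-contractivity, iterate to an $L^2\to L^\infty$ bound, obtain the $L^1\to L^2$ half from the returned adjoint family via~\eqref{eq:retadjform}, and compose; your mean-value variant of the smoothing step (space--time energy bound, a good time $\rho\in[s,t]$, then transport by~\eqref{eq them:quasilpcontractivity C0S-2}) is a legitimate substitute for the paper's monotonicity-and-averaging argument leading to~\eqref{proof: G-N eq 1}. But there is a decisive gap: your iteration has no engine. Quasi-contractivity maps $L^q$ into $L^q$, so it cannot, as you assert, ``effect the passage between consecutive integrability levels''; composing your $L^2\to L^q$ bound with $L^q\to L^q$ bounds never leaves $L^q$, and since (as you yourself note) the energy method is unavailable for $p>2$, the scheme stalls at the first rung. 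The missing idea --- the heart of the paper's Lemma~\ref{Thm1: Ultracontractivity}, following \cite{Cou93} and \cite[Thm.~5.2]{Ouh04} --- is to interpolate (Riesz--Thorin, \cite[Thm.~2.2.14]{Dav07}) the \emph{single} $L^2\to L^{2(N+2)/N}$ smoothing estimate against the $L^r$-quasi-contractivity, with $\theta=1/p$ and $r=2(p-1)$, which produces the ladder of bounds $\|U(t,s)\|_{\L(L^{p}(X),L^{pN/(N-1)}(X))}\le\kappa^{1/p}(t-s)^{-\kappa_1/(2p)}e^{(1-\frac1p)(t-s)[\alpha_1+2(p-1)\alpha_2]}e^{\frac1p(t-s)[\alpha_1+\hat2\alpha_2]}$ of~\eqref{eq: lplp*}; only these can be telescoped along $p_k=2\big(\tfrac{N}{N-1}\big)^k$ with time steps $t_k\sim\big(\tfrac{2N}{N-1}\big)^{-k}$, and it is there --- since $t_kp_k\sim 2^{-k}$ --- that the linear growth in $q$ is actually absorbed and the exponent $-\frac{N^2}{2(N+2)}$ and the coefficient $\frac{2(N+2)}{N}$ of $\alpha_2+\alpha_2^*$ emerge. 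Without the interpolation step your ``real effort'' remark points at a hole, not a computation.

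A second, repairable flaw is your discretization scaffolding, which is both unnecessary and unjustified. The hypothesis is linear quasi-contractivity of the evolution family $\U$, not of the individual semigroups $(e^{-rA(t)})_{r\ge0}$; hence Nittka's characterization \cite[Thm.~4.1]{Nit12} --- which Theorem~\ref{theorem quasilpcontractivity} can invoke because the semigroup bounds are \emph{assumed} there --- is not available for the averaged operators $A_k$, and you have no way to equip $\U_\Lambda$ with the bounds~\eqref{eq them:quasilpcontractivity C0S-2} that your smoothing, transport, and iteration steps all use. The paper avoids this entirely by working with $\U$ directly: the $L^2$-energy identity holds for $MR$-solutions by~\eqref{poduct rule classique}, and the monotonicity of $r\mapsto e^{-(r-s)[\alpha_1+q\alpha_2]}\|U(r,s)f\|_{L^q(X)}$ follows from the evolution law together with~\eqref{eq them:quasilpcontractivity C0S-2}; no differentiability of $L^p$-norms for $p\ne2$ is ever needed, so the passage to $\fra_\Lambda$ and Theorem~\ref{convergence forte de solution approchee} solves a non-problem while creating a real one.
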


 In the proof of Theorem~\ref{Thm2: Ultracontractivity} we will need the following lemma.


\begin{lemma}\label{Thm1: Ultracontractivity} Let $\fra\in \Formm([0,T];V,H)$ and let $\U$ and $\cev{\U^*}$ be the evolution families associated with $\fra$ and $\cev{\fra^*}$, respectively. Assume that $\U$ and $\cev{\U^*}$ are both linearly quasi-contractive (with constants $\alpha_1,\alpha_2,\alpha_1^\ast,\alpha_2^\ast$). In addition we assume that there exist constants ${\kappa_1},\kappa>0$ such that
	\begin{equation}\label{eq: l2l2*}
	\|U(t,s)\|_{\L(L^2(X),L^{\hat 2}(X))}\leq { \bf \kappa}(t-s)^{-\frac{\kappa_1}{2}} { e^{(t-s)[\alpha_1+\hat 2\alpha_2]}}\qquad \hbox{for all }(t,s)\in \Delta
	\end{equation} 
	and 
	\begin{equation}\label{eq: l2l2*Returned adjoint}
	\|\cev{U^*}(t,s)\|_{\L(L^2(X),L^{\hat 2 }(X))}\leq { \kappa}(t-s)^{-\frac{\kappa_1}{2}}{e^{(t-s)[\alpha_1^*+\hat 2\alpha_2^*]} }\qquad \hbox{for all }(t,s)\in \Delta
	\end{equation} 
where $\hat 2 :=\frac{2N}{N-2}$ for some integer $N\ge 3$. Then $\U$ is ultracontractive: more precisely,
\begin{equation}\label{psedoUltracontractivity}
	\|U(t,s)\|_{\L(L^{1}(X),L^{\infty}(X))}\leq\kappa^{\frac{N}{2}}c(t-s)^{-\frac{N\kappa_1}{2}}e^{\tilde\omega(t-s)}\qquad\hbox{ for	all }(t,s)\in\Delta\hbox{ with }t> s
\end{equation}
for some positive constants $c,\mu>0$ that depend only on $N, \kappa_1$, where
\[
\tilde\omega:=[\alpha_1+\alpha_{1}^*]+\mu[\alpha_2+\alpha_{2}^*].
\] 
\end{lemma}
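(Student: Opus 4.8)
The statement is a non-autonomous Davies-type extrapolation: from two "one-step" ultracontractivity bounds $L^2\to L^{\hat 2}$ (for $\U$ and for the returned adjoint $\cev{\U^*}$), together with linear $L^p$-quasi-contractivity of both families, deduce the full $L^1\to L^\infty$ bound. The guiding principle is the classical Nash–Moser iteration / Coulhon–Varopoulos extrapolation scheme adapted to evolution families by splitting the time interval $[s,t]$ dyadically and composing $L^{p_j}\to L^{p_{j+1}}$ bounds along a geometric sequence of exponents $p_j$. The crucial structural tool that makes the non-autonomous argument work the same way as the autonomous one is the evolution law $U(t,s)=U(t,r)U(r,s)$ (property (ii) in the definition of evolution family) and the consistency of the extrapolated families $\U_p$ recorded in the Remark after Definition~\ref{submarkovian EVF}.

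Let me sketch the iteration. I will first upgrade the single-step bound \eqref{eq: l2l2*} to a family of bounds $\|U(t,s)\|_{\mathcal L(L^{q}, L^{\hat q})}$ where $\hat q=\frac{qN}{N-2}$ scales the exponent $q$ up by the factor $\frac{N}{N-2}$, using the linear quasi-contractivity \eqref{eq them:quasilpcontractivity C0S-2} to interpolate (Riesz–Thorin) between the $L^2\to L^{\hat 2}$ estimate and the plain $L^p\to L^p$ contractivity, tracking how the power of $(t-s)$ and the exponential rate transform under interpolation. Then I would set up the dyadic decomposition: fix $(t,s)\in\Delta$, put $t_j:=s+(t-s)(1-2^{-j})$ so that $t_0=s$, $t_j\uparrow t$, and write $U(t,s)$ as the (infinite) product $\cdots U(t_{j+1},t_j)\cdots$ of factors of length $(t-s)2^{-(j+1)}$. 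Choosing the exponent sequence $p_j:=2\bigl(\frac{N}{N-2}\bigr)^{?}$ (more naturally $p_j=\frac{2N}{N-2\cdot 2^{-j}}$ or the reciprocal-arithmetic-progression variant) so that each factor maps $L^{p_j}\to L^{p_{j+1}}$, I multiply the single-step operator norms: the product of the $\kappa (t-s)^{-\kappa_1/2}$-type prefactors telescopes, the powers of $2$ sum to a geometric series giving the overall exponent $-\tfrac{N\kappa_1}{2}$ on $(t-s)$ and the constant $c$ depending only on $N,\kappa_1$, while the exponential rates $\alpha_1+\hat q\,\alpha_2$ sum (via $\sum 2^{-j}\hat q_j$, a convergent series) to $\alpha_1+\mu\alpha_2$ for some $\mu=\mu(N,\kappa_1)$. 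This yields an $L^2\to L^\infty$ bound; the symmetric input \eqref{eq: l2l2*Returned adjoint} together with the duality identity \eqref{eq:retadjform} gives the matching $L^1\to L^2$ bound for $\U$ (equivalently an $L^2\to L^\infty$ bound for $\cev{\U^*}$), and finally composing the two halves across the midpoint $\frac{s+t}{2}$ via the evolution law produces \eqref{psedoUltracontractivity}, with $\tilde\omega=[\alpha_1+\alpha_1^*]+\mu[\alpha_2+\alpha_2^*]$ because the two halves contribute the $\alpha$- and $\alpha^*$-rates respectively.

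\textbf{Main obstacle.} The genuinely delicate point is the bookkeeping of the \emph{exponential rates} under iteration, not the power of $(t-s)$: because the intermediate exponents $p_j=\hat q_j$ grow geometrically, the naive rate $\alpha_1+p_j\alpha_2$ at step $j$ grows like $\bigl(\frac{N}{N-2}\bigr)^j$, and one must check that when these are weighted by the shrinking step-lengths $(t-s)2^{-(j+1)}$ the resulting series $\sum_j (t-s)2^{-(j+1)}\bigl(\alpha_1+p_{j+1}\alpha_2\bigr)$ still converges — it does precisely when $\frac{1}{2}\cdot\frac{N}{N-2}<1$, i.e. $N>4$, or more carefully because the correct exponent sequence is chosen so that $p_{j+1}2^{-(j+1)}$ stays summable for all $N\ge 3$. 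Getting the exponent sequence right so that both the geometric telescoping of prefactors and the convergence of the rate series hold simultaneously (and reproduce exactly the stated $-\tfrac{N\kappa_1}{2}$ and the stated $\tilde\omega$) is where the real care is needed; the Riesz–Thorin interpolation and the duality step are routine once the consistency of the extrapolated families is invoked.
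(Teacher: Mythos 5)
Your plan follows the paper's proof architecture exactly: Riesz--Thorin interpolation of the one-step bound \eqref{eq: l2l2*} against the linear quasi-contractivity estimate \eqref{eq them:quasilpcontractivity C0S-2}, a Moser-type composition along a partition of $[s,t]$ via the evolution law to reach $L^2\to L^\infty$, the identical iteration for $\cev{\U^*}$ converted through the duality identity \eqref{eq:retadjform} into the $L^1\to L^2$ half, and a final composition of the two halves. The genuine gap sits exactly where you flag your ``main obstacle'': the iteration data you actually propose do not work, and the correct choice is the substantive content of the proof. First, the per-step gain $L^q\to L^{qN/(N-2)}$ is not obtainable from the hypotheses. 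Interpolating $L^2\to L^{\hat 2}$ against $L^r\to L^r$ yields exponents satisfying $\frac{1}{p_1}-\frac{1}{q_1}=\frac{\theta}{N}$ with $\frac{1}{p_1}=\frac{1-\theta}{r}+\frac{\theta}{2}$; demanding $q_1=\frac{p_1N}{N-2}$ forces $\theta=\frac{2}{p_1}$ and hence $r=\infty$. But linear quasi-contractivity is assumed only for finite $p$, with rate $\alpha_1+p\alpha_2$ degenerating as $p\to\infty$, so there is no uniform $L^\infty$ endpoint to interpolate against. The achievable gain per step is only the factor $\frac{N}{N-1}$: the paper chooses $\theta=\frac{1}{p}$ and $r=2(p-1)$, which gives a bound on $\|U(t,s)\|_{\L(L^{p},L^{N_p})}$ with $N_p=\frac{pN}{N-1}$.

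Second, given the correct (necessarily slower) exponent growth $p_k=2R^k$ with $R=\frac{N}{N-1}$, your dyadic step lengths are not the choice under which all three bookkeeping requirements close simultaneously. The paper instead takes $t_k=\frac{N+1}{2N}(2R)^{-k}$, so that $t_kp_k=\frac{N+1}{N}2^{-k}$ makes the exponential-rate series $\sum_k t_kp_k\alpha_2$ summable for every $N\ge 3$ --- note that your own convergence criterion $\frac12\cdot\frac{N}{N-2}<1$, i.e.\ $N>4$, shows that your version of the iteration genuinely fails for $N=3,4$ --- while $\sum_k \frac{1}{p_k}=\frac{N}{2}$ accounts for the prefactor $\kappa^{N/2}$ and the power $(t-s)^{-N\kappa_1/4}$ in each half, hence the exponent $-\frac{N\kappa_1}{2}$ in \eqref{psedoUltracontractivity} after composing across the midpoint. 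Your fallback sequence $p_j=\frac{2N}{N-2\cdot 2^{-j}}$ is no repair either: it \emph{decreases} to $2$, so that iteration never reaches $L^\infty$. In short, the strategy and the duality step are the right ones and coincide with the paper's, but the exponent-and-step bookkeeping you leave open is precisely where the proof lives, and the specific candidates you supply would fail.
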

\begin{proof} For the proof we follow similar argument as in \cite[Thm.~5.2]{Ouh04} and \cite{Cou93} where ultracontractivity for semigroups are treated. 

\textit{Step 1.} We will first prove that 
	\begin{equation}\label{eq: l2linfty}
	\|U(t,s)\|_{\L(L^{2}(X),L^{\infty}(X))}\leq \kappa^{\frac{N}{2}} C(t-s)^{-\frac{\kappa_1N}{4}}e^{\alpha_1(t-s)}e^{\mu\alpha_2(t-s)},
	\end{equation}
	where the positive constants $C$ and $\mu$ depend only on $N$ and $\kappa_1.$
For some $r>2$ that will be fixed later we can combine~\eqref{eq: l2l2*} with the linear $L^r$-quasi-contractivity of $\U$ and obtain by a version of Riesz--Thorin interpolation theorem \cite[Thm.~2.2.14]{Dav07} that for any $\theta\in[0,1]$
	\begin{equation*}\label{eq2: l2l2*}
	\|U(t,s)\|_{\L(L^{p_1}(X),L^{q_1}(X))}\leq { \bf \kappa^\theta}(t-s)^{-\frac{\kappa_1\theta}{2}}e^{(1-\theta)(t-s)[\alpha_1+r\alpha_2]}e^ {\theta(t-s)[\alpha_1+\hat 2\alpha_2]} \quad
	\end{equation*} 
where $\frac{1}{p_1}:=\frac{1-\theta}{r}+\frac{\theta}{2}, \frac{1}{q_1}:=\frac{1-\theta}{r}+\frac{\theta}{\hat 2}$. Let now $p\in]2,\infty[.$ Choosing $\theta:=\frac{1}{p}$ and $r=2(p-1)$ in the above equation we obtain that
	\begin{equation}\label{eq: lplp*}
	\|U(t,s)\|_{\L(L^{p}(X),L^{N_p}(X))}\leq { \bf \kappa}^\frac{1}{p}(t-s)^{-\frac{\kappa_1}{2p}}e^{(1-\frac{1}{p})(t-s)[\alpha_1+2(p-1)\alpha_2]}e^{\frac{1}{p}(t-s)[\alpha_1+\hat 2\alpha_2]}
	\end{equation} 
	holds for all $p\in ]2,\infty[$ where $N_p:=p\frac{N}{N-1}.$

Next, set $R=\frac{N}{N-1},p_k=2R^k$ and $t_k=\frac{N+1}{2N} (2R)^{-k}=\frac{N+1}{Np_k} 2^{-k}$ for all $k\in \N.$ Moreover, let $s_0=s$ and $s_{k+1}=s_k+t_k(t-s)$ for each integer $k>0.$ Then we have $\sum_{k} t_k=1, \sum_{k} \frac{1}{p_k}=\frac{N}{2}.$ Furthermore, $s_{k+1}<s_k$ for all $k\in \N$ and $t=\lim\limits_{k\to \infty}s_k.$ Thus, applying~\eqref{eq: lplp*} for $p=p_k,$ using \eqref{eq them:quasilpcontractivity C0S-2} and the evolution law satisfied by $\U$ we deduce that 
	\begin{align*}
	\|U(t,s)\|_{\L(L^{2}(X),L^{\infty}(X))}&\leq \prod_{k\geq 0}\|U(s_{k+1},s_k)\|_{\L(L^{p_k}(X),L^{N_{p_k}}(X))}
	\\&\leq \prod_{k\geq 0} \kappa^{\frac{1}{p_k}}t_k^{-\frac{\kappa_1}{2p_k}}(t-s)^{-\frac{\kappa_1}{2p_k}}e^{(1-\frac{1}{p_k})(s_{k+1}-s_k)[\alpha_1+2(p_k-1)\alpha_2]+\frac{1}{p_k}(s_{_{k+1}}-s_k)[\alpha_1+\hat 2\alpha_2]}
	\\&= \kappa^{\frac{N}{2}}(t-s)^{-\frac{\kappa_1 N}{4}}\prod_{k\geq 0} t_k^{-\frac{\kappa_1}{2p_k}}e^{(1-\frac{1}{p_k})(s_{k+1}-s_k)[\alpha_1+2(p_k-1)\alpha_2]+\frac{1}{p_k}(s_{_{k+1}}-s_k)[\alpha_1+\hat 2\alpha_2]}	\\
&=\kappa^{\frac{N}{2}}(t-s)^{-\frac{N\kappa_1}{4}}\prod_{k\geq 0} t_k^{-\frac{\kappa_1}{2p_k}}\prod_{k\geq 0} e^{(t-s)t_k\alpha_1}\prod_{k\geq 0}e^{(t-s)t_k\alpha_2[p_k-1-\frac{p_k-1}{p_k}+ \frac{\hat 2}{p_k}]}
\\&=\kappa^{\frac{N}{2}}(t-s)^{-\frac{\kappa_1N}{4}}\prod_{k\geq 0} t_k^{-\frac{\kappa_1}{2p_k}}e^{\alpha_1(t-s)}e^{\alpha_2(t-s)\sum_{j\geq 0}t_j\Big[\frac{(p_j-1)^2+\frac{2N}{N-2}}{p_j}\Big]}
\\&{\leq} \kappa^{\frac{N}{2}} C(t-s)^{-\frac{\kappa_1N}{4}}e^{\alpha_1(t-s)}e^{\mu\alpha_2(t-s)},
	\end{align*}
where the positive constants $C$ and $\mu$ depend only on $N$ and $\kappa_1.$
	
\textit{Step 2.} It remains to estimate $U(t,s)$ in $\L(L^{1}(X),L^{2}(X)).$ To this end, we will follow an idea in~\cite[Corollary 5.3]{Dan00} and use the returned adjoint form $\cev{\fra^*}$. Indeed, by assumption $\cev{\fra^*}$ is linearly contractive. Thus one can just repeat the argument in Step 1 and obtain 
	\begin{equation}\label{eq: l2linftyreturend}
	\|\cev{U^*}(t,s)\|_{\L(L^{2}(X),L^{\infty}(X))}\leq { \tilde C}(t-s)^{-\frac{\kappa_1 N}{4}}e^{\alpha_{1}^*(t-s)}e^{\tilde \mu\alpha_{2}^*(t-s)} \quad
	\end{equation} 
	for each $(t,s)\in\Delta$ and some constants $\tilde C, \tilde\mu$ that depend only on $N, \kappa_1$. This yields, in turn, an estimate of $U(t,s)$ from $L^2(X)$ to $L^1(X)$, thanks to~\eqref{eq: l2linftyreturend}.
	 Finally, using again the evolution law satisfied by $U$ we conclude that $\U$ is ultracontractive and~\eqref{psedoUltracontractivity} holds. 
	\end{proof}

\begin{proof}[Proof of Theorem~\ref{Thm2: Ultracontractivity}]
Upon rescaling the evolution family by $e^{-\omega(t-s)}$ we can without loss of generality assume $\U$ to be contractive.
Let $q>2, f\in L^q(X)\cap L^2(X)$ and set \[\widehat U(t,s):=e^{-(t-s)[\alpha_1+q\alpha_2]}U(t,s) \quad \text{ for each } (t,s)\in\Delta.\] Because of \eqref{eq them:quasilpcontractivity C0S-2} we have that $t\mapsto\|\widehat U(t,s)f\|_{L^q(X)}$ is decreasing on $[s,T]$ for each $s\in [0, T).$
Let now $(t,s)\in \Delta$: the contractivity of $\U$ together with the Gagliardo--Nirenberg inequality and \eqref{poduct rule classique} imply that for all $q>2$ and all 
	\[
	\begin{split}
	(t-s)\|\widehat U(t,s)f\|_{L^q(X)}^{\frac{4q}{N(q-2)}}&\leq \int_s^t \|\ \widehat U(r,s)f\|_{L^q(X)}^{\frac{4q}{N(q-2)}} dr
	\\&\leq C_G^{\frac{4q}{N(q-2)}} \int_s^t \| \widehat U(r,s)f\|_{L^2(X)}^{\frac{4q}{N(q-2)}-2}\|\ \widehat U(r,s)f\|_{V}^{2} dr
	\\&\leq C_G^{\frac{4q}{N(q-2)}}\alpha^{-1}\|f\|_{L^2(X)}^{\frac{4q}{N(q-2)}-2}\int_s^t \Big[\Re\fra\big(r; \widehat U(r,s)f\big)
+[\alpha_1+q\alpha_2]\Big\|\widehat U(r,s)f\Big\|_{L^2(X)} \Big] dr
	\\&= -C_G^{\frac{4q}{N(q-2)}}\alpha^{-1}\|f\|_{L^2(X)}^{\frac{4q}{N(q-2)}-2}\int_s^t \Re<\frac{\partial}{\partial r}U(r,s)f,U(r,s)f> dr
	\\&=- C_G^{\frac{4q}{N(q-2)}}\alpha^{-1}2^{-1}\|f\|_{L^2(X)}^{\frac{4q}{N(q-2)}-2}\int_s^t \frac{\partial}{\partial r}\|U(r,s)f\|_{L^2(X)}^2 dr
	\\&=- C_G^{\frac{4q}{N(q-2)}}\alpha^{-1}2^{-1}\|f\|_{L^2(X)}^{\frac{4q}{N(q-2)}-2}\Big(\|U(t,s)f\|_{L^2(X)}^2-\|f\|_{L^2(X)}^2\Big)
	\\&\leq C_G^{\frac{4q}{N(q-2)}}\alpha^{-1}\|f\|_{L^2(X)}^{\frac{4q}{N(q-2)}}.
	\end{split}
	\]
	Here we have used that $t\mapsto \widehat U(t,s)f$ solves \eqref{evolution equation u(s)=x} with $A(t)$ replaced by $\A(t)+[\alpha_1+q\alpha_2].$
	It follows that for all $(t,s)\in\Delta$ and all $q>2$ 
	\begin{equation}\label{proof: G-N eq 1} 
	\|U(t,s)\|_{\L( L^2(X), L^q(X))}\leq C_G \alpha^{-N\frac{q-2}{4q}}(t-s)^{-N\frac{q-2}{4q}}e^{(t-s)[\alpha_1+q\alpha_2]}
	\end{equation}
and likewise
	\begin{equation}\label{proof: G-N eq 1 returned} 
	\|\cev{U^*}(t,s)\|_{\L( L^2(X), L^q(X))}\leq C_G \alpha^{-N\frac{q-2}{4q}}(t-s)^{-N\frac{q-2}{4q}}e^{(t-s)[\alpha^*_1+q\alpha^*_2]}
	\end{equation}
Choosing now $q=\frac{2(N+2)}{N+2-2}=\frac{2(N+2)}{N}$ in~\eqref{proof: G-N eq 1}-\eqref{proof: G-N eq 1 returned} we obtain that \eqref{eq: l2l2*Returned adjoint}-\eqref{eq: l2l2*} are fulfilled with $\kappa=C_G\alpha^{-\frac{N}{2(N+2)}}$ and $\kappa_1=\frac{N}{N+2}.$ Thus we conclude by Lemma \ref{Thm1: Ultracontractivity} that $\U$ is ultracontractive and~\eqref{2Ultracontractivity for closed form} holds.
	\end{proof}

	\begin{remark}\label{rem:kurz}
Theorem~\ref{Thm2: Ultracontractivity} holds in particular for $N\frac{q-2}{2q}=1$: in this case the Gagliardo--Nirenberg inequality becomes 
\[
\|u\|_{L^q(X)}\le C_G \|u\|_V\qquad \hbox{for all }u\in V\ ,
\]
i.e.,~\eqref{Gagliardo--Nirenberg inequality} reduces to the elementary assumption that $V$ is continuously embedded in some $L^q(X)$: a classical Sobolev inequality. More precisely, if there exists $N>2$ such that
	\begin{equation}\label{embedding V in Lq}
	 V\subset L^{q}(X) \text{ for } \frac{1}{q}=\frac{1}{2}-\frac{1}{N},
	 \end{equation} 
then $\U$ is ultracontractive and \eqref{2Ultracontractivity for closed form} holds.
	\end{remark}

\section{Gaussian bounds}\label{sec:gaussian}

The existence of integral kernels of the evolution family, established in the previous section, paves the way to the discussion of kernel estimates.



\begin{definition}\label{Definition: Gaussian bound for EVF}
Let $\U$ be an evolution family on $L^2(\R^d)$ with an integral kernel $\Gamma$.
Then $\U$ is said to satisfy \emph{Gaussian bounds} if there exist $b,c>0$, $n>0$, and $\omega\in \R$ such that
	\begin{equation}
	\label{eq:pseudoGaussian bounds}
	|\Gamma(t,s;x,y)|\displaystyle\leq ce^{\omega (t-s)}(t-s)^{-\frac{n}{2}} \exp\big(-b\frac{|x-y|^2}{t-s}\big)
	\end{equation}
	for all $(t,s)\in \Delta$ and a.e.\ $x,y\in\R^d$.
\end{definition}

We regard $L^2(\Omega)$ as a closed subspace of $L^2(\R^d)$, extending operators on $L^2(\Omega)$ to $L^2(\R^d)$ by $0$. In this way we can naturally define Gaussian bounds for operators on $L^2(\Omega)$. 

%
Gaussian bounds for evolution equations can be characterized by ultracontractivity. Well-known for autonomous closed forms, this characterization is based on the so-called \textit{Davies' trick}, first appeared in~\cite{Dav87}, see also~\cite[Thm.~3.3]{AreEls97} and \cite[Thm.~13.1.4]{Are06} for more general versions. Davies' trick is essentially an algorithm centered around an auxiliary result, whose non-autonomous counterpart is Theorem~\ref{Thm: Gaussian bounds} below.

To begin with we introduce a suitable space
\begin{equation}\label{W-Davies Space} 
W:=\Big\{ \psi\in C^\infty(\R^d)\cap L^\infty(\R^d) \mid \|D_j \psi\|_\infty\leq 1, \|D_iD_j \psi\|_\infty\leq 1, i,j=1,\ldots,n \Big\}
\end{equation}
of smooth functions. By \cite[p. 200--202]{Rob91}, the function $d:\R^d\times \R^d\to \R_+$ defined by
\[
d(x,y):=\sup\{|\psi(x)-\psi(y)|\ |\ \psi\in W\}
\]
is a metric equivalent to the Euclidean one: there exists $\beta>0$ such that
 \begin{equation}\label{eq Definition constant beta}
	\beta|x-y|\leq d(x,y)\leq \beta^{-1}|x-y|\qquad \hbox{for all }x\in \R^d.
	\end{equation} 
Let $\U$ be an evolution family on $L^2(\Omega)$ and, as usual, extend it if needed to $L^2(\R^d)$. For a fixed $\psi\in W$ we define perturbed evolution families $\U_\rho$ on $L^2(\R^d)$ by
\[ U_\rho(t,s):=U_\rho^\psi(t,s):=M_\rho U(t,s)M_\rho^{-1},\quad \rho \in \R,\]
where $M_\rho$ is the isomorphism on $L^2(\R^d)$ defined by
\[
(M_\rho g)(x):=(M_\rho^\psi g)(x):=e^{-\rho\psi(x)}g(x),\quad g\in L^2(\R^d),\ x\in \R^d.
\]
Gaussian bounds for $\U$ can now be derived from uniform ultracontractivity of the perturbed 
evolution families $\U_\rho$ with respect to $\rho$ and $\psi.$ 
The proof of this fact is very similar to that of the autonomous case studied in \cite[Prop.~3.3]{AreEls97} and we omit it: our result contains~\cite[Thm.~6.1]{Dan00} as a special case.

\begin{theorem}\label{Thm: Gaussian bounds} Let $\U$ be an evolution family on $L^2(\Omega).$ Then the following are equivalent:
	\begin{enumerate}[(i)]
		\item There exist $c>0$, {$n>0$,} and $\tilde\omega\in\R$ such that
		\begin{equation}\label{Thm:eq1 Gaussian bounds}
		\|U_\rho(t,s)\|_{\L(L^1(\R^d),L^\infty(\R^d))}\leq c(t-s)^{-\frac{n}{2}}e^{\tilde\omega(1+\rho^2)(t-s)}
		\end{equation}
		for all $\rho\in\R$, $\psi\in W$, and $(t,s)\in \Delta$.
		\item $\U$ satisfies Gaussian bounds.
	\end{enumerate}
	In this case $\U$ satisfies~\eqref{eq:pseudoGaussian bounds} with $b=\frac{\beta^2}{4\tilde\omega}, c,\tilde\omega$ as in \eqref{Thm:eq1 Gaussian bounds} and $\beta$ in~\eqref{eq Definition constant beta}.
\end{theorem}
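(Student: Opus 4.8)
The plan is to run the classical \emph{Davies' trick} in its non-autonomous guise: transfer the operator estimate in (i) into a pointwise bound on the kernel of the twisted family $\U_\rho$, and then optimize over the two free parameters $\psi\in W$ and $\rho\in\R$. First I would identify the kernel of $U_\rho(t,s)=M_\rho U(t,s)M_\rho^{-1}$. Since $M_\rho$ is multiplication by $e^{-\rho\psi}$ and $M_\rho^{-1}$ multiplication by $e^{\rho\psi}$, a direct computation on the integral representation of $U(t,s)$ shows that $U_\rho(t,s)$ has kernel
\[
\Gamma_\rho(t,s;x,y)=e^{-\rho(\psi(x)-\psi(y))}\,\Gamma(t,s;x,y).
\]
Taking $\rho=0$ shows that (i) forces $\U$ itself to be ultracontractive, so by the Kantorovitch--Vulikh theorem already invoked in Section~\ref{sec:ultra} both $\U$ and each $\U_\rho$ are given by kernels and
\[
\|U_\rho(t,s)\|_{\L(L^1(\R^d),L^\infty(\R^d))}=\esssup_{x,y}|\Gamma_\rho(t,s;x,y)|.
\]
Thus (i) is equivalent to the a.e.\ pointwise estimate $|\Gamma(t,s;x,y)|\le c(t-s)^{-\frac n2}e^{\tilde\omega(1+\rho^2)(t-s)}e^{\rho(\psi(x)-\psi(y))}$, valid for every $\psi\in W$ and every $\rho\in\R$.

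For the implication (i)$\Rightarrow$(ii) I would, fixing $(t,s)$ and assuming without loss of generality $\tilde\omega>0$, minimize the right-hand side over $\psi$ and $\rho$. Since $W$ is invariant under $\psi\mapsto-\psi$, one has $\inf_{\psi\in W}(\psi(x)-\psi(y))=-d(x,y)$, so for $\rho\ge 0$ the factor $e^{\rho(\psi(x)-\psi(y))}$ can be replaced by $e^{-\rho d(x,y)}$. It then remains to minimize the quadratic exponent $\tilde\omega(1+\rho^2)(t-s)-\rho\,d(x,y)$ over $\rho\ge0$; the minimizer is $\rho=\frac{d(x,y)}{2\tilde\omega(t-s)}$ and produces the factor $e^{\tilde\omega(t-s)}\exp\!\big(-\frac{d(x,y)^2}{4\tilde\omega(t-s)}\big)$. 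Finally the lower metric equivalence $\beta|x-y|\le d(x,y)$ from~\eqref{eq Definition constant beta} converts this into the Gaussian bound~\eqref{eq:pseudoGaussian bounds} with $\omega=\tilde\omega$ and $b=\frac{\beta^2}{4\tilde\omega}$, exactly as asserted.

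For the converse (ii)$\Rightarrow$(i) I would substitute the Gaussian bound into the kernel $\Gamma_\rho$ and use the upper bound $|\psi(x)-\psi(y)|\le d(x,y)\le\beta^{-1}|x-y|$ to get
\[
|\Gamma_\rho(t,s;x,y)|\le c\,e^{\omega(t-s)}(t-s)^{-\frac n2}\exp\!\Big(-b\frac{|x-y|^2}{t-s}+|\rho|\beta^{-1}|x-y|\Big).
\]
Maximizing the exponent over $r=|x-y|\ge0$ is again an elementary quadratic optimization, giving the value $\frac{\rho^2}{4b\beta^2}(t-s)$; hence $\|U_\rho(t,s)\|_{\L(L^1(\R^d),L^\infty(\R^d))}\le c(t-s)^{-\frac n2}\exp\!\big((\omega+\frac{\rho^2}{4b\beta^2})(t-s)\big)$. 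Choosing $\tilde\omega:=\max\{\omega,(4b\beta^2)^{-1}\}$ makes $\omega+\frac{\rho^2}{4b\beta^2}\le\tilde\omega(1+\rho^2)$, which is precisely~\eqref{Thm:eq1 Gaussian bounds}.

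The genuinely delicate point, rather than the inequalities themselves, is measure-theoretic. Estimate (i) holds for \emph{a.e.} $(x,y)$ with an exceptional null set that may depend on $\psi$ and $\rho$, whereas the optimization in (i)$\Rightarrow$(ii) ranges over the uncountable families $W$ and $\R$, so a naive union of null sets is not permissible. The hard part will be to legitimize the pointwise infimum: I would restrict to a countable subfamily of $W$ that still realizes $d$ up to the constant $\beta$ --- for instance suitably truncated and mollified linear functions $z\mapsto\langle z,e\rangle$ along a countable dense set of unit directions $e$, which lie in $W$ after normalization --- together with rational $\rho\ge 0$, take the (now countable) union of the corresponding null sets, and recover the full infimum by a density/continuity argument. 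With this bookkeeping in place, the remainder of the argument is the routine Davies-type optimization sketched above.
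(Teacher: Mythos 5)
Your proposal is correct and follows essentially the same route as the paper, which omits the proof precisely because it is the standard Davies-trick argument of the autonomous case in~\cite[Prop.~3.3]{AreEls97}: identify the twisted kernel $\Gamma_\rho(t,s;x,y)=e^{-\rho(\psi(x)-\psi(y))}\Gamma(t,s;x,y)$, perform the quadratic optimization in $\rho$ against the metric $d$ (yielding exactly $\omega=\tilde\omega$ and $b=\frac{\beta^2}{4\tilde\omega}$ via~\eqref{eq Definition constant beta}), and run the reverse quadratic estimate with $\tilde\omega:=\max\{\omega,(4b\beta^2)^{-1}\}$ for (ii)$\Rightarrow$(i). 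Your closing reduction to a countable subfamily of $W$ and rational $\rho$ correctly disposes of the only genuinely delicate point, the $(\psi,\rho)$-dependent null sets in the pointwise kernel bounds.
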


The form domain $V$ of $\fra\in \Formm([0,T];V,H)$ is said to be \textit{$W$-invariant} if $M_\rho V\subset V$ for all $\rho\in \R$ and
$\psi\in W.$
In this case the family of mappings $\fra^\rho$ given by 
\begin{equation}\label{eq:brf}
a^\rho(t;u,v):=a(t;M_\rho u, M_\rho^{-1} v),\quad \rho\in \R,\ t\in [0,T],\ u,v\in V,
\end{equation}
is well-defined. 
Let now $\fra^\rho\in \Formm([0,T];V,H)$: for each $\rho\in \R$, $\psi\in W$, and $t\in[0,T]$ we can hence consider the operator family
\[
\begin{split}
A^\rho(t)&:=M_\rho^{-1}A(t)M_\rho,\\
D(A^\rho(t))&:=\Big\{ u\in L^2(\Omega) \mid M_\rho u\in D(A(t))\Big\},
\end{split}
\]
and $-A^\rho(t)$ is for all $\rho\in \R$ and all $t\in [0,T]$ the generator of the semigroup $\mathcal T_t^\rho$ given by 
\[T_t^\rho(r):=M^{-1}_\rho e^{-rA(t)}M_\rho,\qquad r\geq 0.\]
\begin{lemma}\label{Lemma: associated perturbed closed form} Assume that $V$ is $W$-invariant and $\fra^\rho\in \Formm([0,T];V,H)$ for each $\rho\in \R$ with constants $M_\rho, \alpha_\rho>0$ and $\omega_\rho\in\R$, i.e.,	\begin{equation}\label{eq:rhoellipt}
	\begin{split}
	|a^\rho(t;u,v)|&\leq M_\rho\|u\|_V\|v\|_V \\
	\Re a^\rho(t;u,u)&+\omega_\rho\|u\|^2_H\geq \alpha_\rho\|u\|_V^2
	\end{split}
	\qquad \hbox{for all } t\in[0,T], u,v\in V.
	\end{equation}		
	Then $(A^\rho(t))_{t\in [0,T]}$ and $\U_\rho$ are the operator family and the evolution family on $H$ associated with $\fra^\rho$, respectively. 
\end{lemma}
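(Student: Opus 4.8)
The plan is to prove both assertions by transporting the corresponding facts about $\fra$ through conjugation with the multiplier $M_\rho$, the only structural input being that $M_\rho$ is a time-independent isomorphism compatible with the Gelfand triple $V\hookrightarrow H\hookrightarrow V'$. I first record the properties of $M_\rho$ that make this work. Since $\psi$ is real-valued and bounded, $M_\rho$ is multiplication by the strictly positive, bounded function $e^{-\rho\psi}$, with bounded inverse $M_\rho^{-1}=M_{-\rho}$; hence $M_\rho$ is an isomorphism of $H=L^2(\R^d)$ that is self-adjoint, $(M_\rho f\mid g)=(f\mid M_\rho g)$. The $W$-invariance hypothesis $M_\rho V\subset V$, applied to both signs of $\rho$, upgrades $M_\rho$ to a bounded isomorphism of $V$; by transposition it extends to a bounded isomorphism of $V'$, and self-adjointness on the pivot space $H$ forces this extension to restrict to the \emph{same} multiplier on $H$. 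Consequently the duality relation
\[
\langle z,M_\rho^{-1}v\rangle=\langle M_\rho^{-1}z,v\rangle\qquad (z\in V',\ v\in V)
\]
holds, first for $z\in H$ and then for all $z\in V'$ by density of $H$ in $V'$.

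With this in hand, the operator family is identified by a one-line pairing computation: for $u,v\in V$, using $a(t;w,z)=\langle \A(t)w,z\rangle$,
\[
a^\rho(t;u,v)=a(t;M_\rho u,M_\rho^{-1}v)=\langle \A(t)M_\rho u,M_\rho^{-1}v\rangle=\langle M_\rho^{-1}\A(t)M_\rho u,v\rangle=\langle A^\rho(t)u,v\rangle,
\]
so that $A^\rho(t)=M_\rho^{-1}\A(t)M_\rho$ is exactly the isomorphism $V\to V'$ associated with $a^\rho(t)$. The description of its part in $H$ is then automatic: since $M_\rho^{-1}$ is an isomorphism of $H$, one has $A^\rho(t)u\in H$ if and only if $\A(t)M_\rho u\in H$, i.e.\ (as $M_\rho u\in V$) if and only if $M_\rho u\in D(\A(t))$, which is the stated domain; conjugating the holomorphic semigroup generated by $-\A(t)$ likewise gives $e^{-rA^\rho(t)}=M_\rho^{-1}e^{-r\A(t)}M_\rho=T_t^\rho(r)$.

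For the evolution family I would use the product-integral construction of Section~\ref{s2}, which makes conjugation completely transparent and avoids any maximal-regularity bookkeeping. Fix a partition $\Lambda$ of $[0,T]$. Because $M_\rho$ is time-independent it pulls out of the Bochner integrals~\eqref{eq:op-moyen integrale}, so the averaged operators of $\fra^\rho$ are $A_k^\rho=M_\rho^{-1}A_kM_\rho$ and hence $e^{-rA_k^\rho}=M_\rho^{-1}e^{-rA_k}M_\rho$ for every $k$. Substituting this into the defining product~\eqref{promenade1}, all the interior factors $M_\rho M_\rho^{-1}$ telescope and one is left with
\[
U_\Lambda^\rho(t,s)=M_\rho^{-1}U_\Lambda(t,s)M_\rho\qquad\hbox{for all }(t,s)\in\Delta.
\]
Since $M_\rho^{\pm1}\in\L(H)$, letting $|\Lambda|\to0$ and invoking Theorem~\ref{convergence forte de solution approchee} for both $\fra$ and $\fra^\rho$ yields the conjugated identity in the strong operator topology; thus the evolution family associated with $\fra^\rho$ is obtained from $\U$ by conjugation with the multiplier, i.e.\ it is the perturbed family $\U_\rho$ introduced before the lemma. (Alternatively, writing $u:=U(\cdot,s)M_\rho x$, one checks directly that $v:=M_\rho^{-1}u$ lies in $MR(V,V')$, satisfies $v(s)=x$ and $\dot v+A^\rho(t)v=M_\rho^{-1}(\dot u+\A(t)u)=0$, and concludes by uniqueness in Lions' theorem.)

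I expect no deep difficulty, since the well-posedness of the $A^\rho$-problem is handed to us by the hypothesis $\fra^\rho\in\Formm([0,T];V,H)$; the entire content is bookkeeping across the triple $V\hookrightarrow H\hookrightarrow V'$. The one step deserving genuine care is the compatibility recorded in the first paragraph — that the $V'$-extension of $M_\rho$ agrees on $H$ with the original multiplier — because it is precisely what licenses moving $M_\rho^{-1}$ across the duality pairing in the identification of $A^\rho(t)$, and it is here, and only here, that the reality of $\psi$ and the $W$-invariance of $V$ are actually used.
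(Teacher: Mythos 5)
Your argument is correct in its essentials, but there is no proof in the paper to compare it against: immediately after Lemma~\ref{Lemma: associated perturbed closed form} the authors write that ``the easy proof is left to the reader,'' and the proof they presumably intend is exactly your parenthetical alternative. Indeed, once your first paragraph is in place ($M_\rho^{\pm 1}$ a time-independent isomorphism of each space in the triple $V\hookrightarrow H\hookrightarrow V'$, compatible with the duality pairing -- where boundedness of $M_\rho$ on $V$ rests on a silent appeal to the closed graph theorem, which you should make explicit), the identification $A^\rho(t)=M_\rho^{-1}\A(t)M_\rho$ is the one-line pairing computation you give, and the evolution-family half follows by conjugating the solution and invoking uniqueness in Lions' theorem, well-posedness for $\fra^\rho$ being free from the hypothesis $\fra^\rho\in\Formm([0,T];V,H)$. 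Your main route through the discretization \eqref{promenade1} and Theorem~\ref{convergence forte de solution approchee} is also sound -- the time-independent conjugation commutes with the averaging \eqref{eq:op-moyen integrale} and the interior factors telescope exactly as you say -- but it is heavier machinery than the lemma needs; its one genuine payoff is that the discrete identity $U_\Lambda^\rho=M_\rho^{-1}U_\Lambda M_\rho$ lets one transport contractivity and invariance properties to the perturbed families partition by partition, which is in the spirit of how such facts are exploited in Section~\ref{sec:gaussian}.

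One point deserves explicit attention, and your own display exposes it: your computation (correctly) shows that the evolution family associated with $\fra^\rho$ is $M_\rho^{-1}U(t,s)M_\rho$, whereas the family $\U_\rho$ defined before the lemma is $U_\rho(t,s)=M_\rho U(t,s)M_\rho^{-1}$. These differ by the substitution $\rho\mapsto-\rho$: with the paper's conventions $a^\rho(t;u,v)=a(t;M_\rho u,M_\rho^{-1}v)$ and $A^\rho(t)=M_\rho^{-1}A(t)M_\rho$, the unique $MR(V,V')$-solution of $\dot v+A^\rho(t)v=0$, $v(s)=x$, is $v(t)=M_\rho^{-1}U(t,s)M_\rho x=U_{-\rho}(t,s)x$, not $U_\rho(t,s)x$. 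So in equating your limit with ``the perturbed family $\U_\rho$ introduced before the lemma'' you reproduce a sign slip that is already present in the statement of the lemma itself, rather than introduce a new error. The slip is harmless for everything downstream, since $M_\rho^{-1}=M_{-\rho}$, the class $W$ is invariant under $\psi\mapsto-\psi$, and the criterion of Theorem~\ref{Thm: Gaussian bounds} quantifies over all $\rho\in\R$ and $\psi\in W$; still, a clean write-up should either define $a^\rho(t;u,v):=a(t;M_\rho^{-1}u,M_\rho v)$ or state the conclusion as $\U_{-\rho}$, and your proof would be strengthened by noting this explicitly instead of passing over it.
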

The easy proof  is left to the reader. 

After all these preparatory results we are finally in the position to present our main theorems: given $\fra\in \Formm([0,T];V,H)$ we introduce two sets of assumptions, which impose a Sobolev-like embedding on $V$ \textit{and} a contractivity condition on the perturbed semigroups $\mathcal T^\rho_t$, and show that each of them imply Gaussian bounds for the evolution family associated with $\fra$.
 \begin{theorem}\label{Main 1 TheoremGaussianEstimate}
Let $\fra\in \Formm([0,T];V,H)$. Assume that
$V$ is $W$-invariant and that~\eqref{eq:rhoellipt} holds for a uniform choice of $\alpha$ and for $\omega_\rho$ such that 
\begin{equation}\label{AdditionalAssumption}
\omega_\rho\leq \omega(1+\rho^2)
\end{equation}
for some constant $\omega>0$ that is independent of $\rho.$
Assume $V$ satisfies a Nash inequality and the semigroups $( e^{-\omega_\rho r}T^\rho_t(r))_{r\geq 0} $ are completely contractive for a.e.\ $t\in [0,T]$ and all $\rho\in\R$.
Then the evolution family $\U$ associated with $\fra$ satisfies Gaussian bounds.
\end{theorem}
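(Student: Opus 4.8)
The plan is to reduce Theorem~\ref{Main 1 TheoremGaussianEstimate} to the ultracontractivity criterion already available in the excerpt, namely Theorem~\ref{Theorem: Ultracontractivity via Nash ineq}, applied not to $\fra$ itself but to the whole family of perturbed forms $\fra^\rho$, and then to feed the resulting bounds into the Davies' trick packaged in Theorem~\ref{Thm: Gaussian bounds}. The point is that $\U_\rho$ is, by Lemma~\ref{Lemma: associated perturbed closed form}, precisely the evolution family associated with $\fra^\rho\in\Formm([0,T];V,H)$, so to invoke Theorem~\ref{Theorem: Ultracontractivity via Nash ineq} for each $\rho$ I need two inputs: that $V$ satisfies a Nash inequality (this is a hypothesis, and crucially it is a property of the \emph{space} $V$ alone, hence unchanged by the perturbation), and that $\U_\rho$ is completely quasi-contractive.

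First I would verify complete quasi-contractivity of $\U_\rho$. By hypothesis the semigroups $(e^{-\omega_\rho r}T^\rho_t(r))_{r\ge 0}$ are completely contractive for a.e.\ $t$ and all $\rho$; that is, each perturbed frozen-coefficient semigroup is, after rescaling by $\omega_\rho$, both $L^1$- and $L^\infty$-contractive. Passing this through the product-integral construction exactly as in the proofs of Theorem~\ref{theorem quasilpcontractivity} and Proposition~\ref{prop:ues} — averaging the forms $\fra^\rho$ over a partition, noting the averaged semigroups inherit the same complete contractivity bound with the averaged $\omega_\rho$, multiplying the factors in~\eqref{promenade1}, and passing to the limit via Theorem~\ref{convergence forte de solution approchee} and Fatou — yields that $\U_\rho$ is completely quasi-contractive with constant $\omega_\rho$.

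Next I would apply Theorem~\ref{Theorem: Ultracontractivity via Nash ineq} to $\fra^\rho$. This gives, with the same Nash constants $\mu,C_N$ and the same ellipticity constant $\alpha$ (both uniform in $\rho$ by assumption), the ultracontractivity estimate
\[
\|U_\rho(t,s)\|_{\L(L^1(\R^d),L^\infty(\R^d))}\leq \Big(\tfrac{\mu C_N}{4\alpha}\Big)^{\frac{\mu}{2}}(t-s)^{-\frac{\mu}{2}}e^{\max\{\omega_\rho,\omega_\rho\}(t-s)}
\]
for all $(t,s)\in\Delta$; the rescaling constant appearing there is $\max\{\omega_\rho,\tilde\omega\}$, but since here $\U_\rho$ is quasi-contractive with constant $\omega_\rho$ the exponent is governed by $\omega_\rho$. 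Now I invoke the uniform bound~\eqref{AdditionalAssumption}, $\omega_\rho\le\omega(1+\rho^2)$, to replace $\omega_\rho$ by $\omega(1+\rho^2)$ in the exponent. The result is exactly condition~(i) of Theorem~\ref{Thm: Gaussian bounds}, with $c=(\mu C_N/4\alpha)^{\mu/2}$, $n=\mu$, and $\tilde\omega=\omega$, holding uniformly over all $\rho\in\R$ and all $\psi\in W$ — the uniformity in $\psi$ being automatic because none of the constants $\mu,C_N,\alpha,\omega$ depends on the choice of $\psi$. Theorem~\ref{Thm: Gaussian bounds} then delivers the Gaussian bounds for $\U$.

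The main obstacle, and the step deserving the most care, is the uniformity of the constants across the whole family $\{\fra^\rho\}_{\rho\in\R}$. The Nash constants are fine since $V$ is fixed, and $\alpha$ is uniform by hypothesis; the genuinely delicate point is that the constant $c$ coming out of Theorem~\ref{Theorem: Ultracontractivity via Nash ineq} must not secretly depend on $\rho$ (e.g.\ through the boundedness constant $M_\rho$, which \emph{is} allowed to grow with $\rho$). Inspecting~\eqref{equ: Ultracontractivity via Nash ineq}, the prefactor involves only $\mu,C_N,\alpha$ and not $M_\rho$, so this works out, but it is exactly here that one must check that the ellipticity constant $\alpha_\rho$ in~\eqref{eq:rhoellipt} has been taken uniform — which is built into the hypothesis ``for a uniform choice of $\alpha$'' — and that only $\alpha$, not $M_\rho$, enters the ultracontractivity prefactor. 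Equally, one must confirm that the $\rho$-dependence of the exponential is entirely absorbed by~\eqref{AdditionalAssumption}; everything else then matches the hypotheses of Theorem~\ref{Thm: Gaussian bounds} verbatim.
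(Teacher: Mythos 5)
Your proposal is correct and follows essentially the same route as the paper: identify $\U_\rho$ as the evolution family of $\fra^\rho$, deduce complete quasi-contractivity of $\U_\rho$ with constant $\omega_\rho$ from the hypothesis on the frozen semigroups $T^\rho_t$, apply Theorem~\ref{Theorem: Ultracontractivity via Nash ineq} (whose prefactor involves only $\mu$, $C_N$, and the uniform $\alpha$, never $M_\rho$), and absorb the $\rho$-dependence via~\eqref{AdditionalAssumption} to verify condition~(i) of Theorem~\ref{Thm: Gaussian bounds}. The only cosmetic difference is that you re-derive the semigroup-to-evolution-family transfer of complete contractivity by hand through the product-integral approximation and Fatou, whereas the paper simply cites Propositions~\ref{proposition Linftycontractivity} and~\ref{prop:L1contractivity} -- which rest on exactly that machinery -- so the substance is the same, and your explicit check that neither $M_\rho$ nor $\psi$ enters the constants makes the uniformity (left implicit in the paper) transparent.
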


\begin{proof}
By Propositions~\ref{proposition Linftycontractivity} and~\ref{prop:L1contractivity} the evolution family $(e^{-{\omega_\rho}(t-s)}U_\rho(t,s))_{(t,s)\in\Delta}$ is completely contractive. Thus by Theorem \ref{Theorem: Ultracontractivity via Nash ineq}
\begin{equation}
\|e^{-{\omega_\rho}(t-s)}U_\rho(t,s)\|_{\L(L^1(\Omega),L^\infty(\Omega))}\leq \Big(\frac{\mu c}{4\alpha}\Big)^{\frac{\mu}{2}}(t-s)^{-\frac{\mu}{2}}e^{\omega_\rho(t-s)}\quad\hbox{for	all }(t,s)\in\Delta.
\end{equation}

Now, using \eqref{AdditionalAssumption} we obtain that $\U_\rho$ satisfies \eqref{Thm:eq1 Gaussian bounds}: the claim follows from Theorem \ref{Thm: Gaussian bounds}.
\end{proof}


 \begin{theorem}\label{Main 2 TheoremGaussianEstimate}
 	Let $\fra\in \Formm([0,T];V,H)$ with associated evolution family $\U$. Assume that
 	$V$ is $W$-invariant and that~\eqref{eq:rhoellipt} holds for a uniform choice of $\alpha$ and all $t,u,v.$ Assume that $V$ satisfies a Gagliardo--Nirenberg inequality and both $\U_\rho$ and $\cev{{\U_\rho}^*}$ are linearly quasi-contractive for all $\rho\in\R.$ Then $\U_\rho$ is ultracontractive for all $\rho\in\R$ with 
 	\begin{equation}
 	\|U_\rho(t,s)\|_{\L(L^{1}(\Omega),L^{\infty}(\Omega))}\leq C_G^{\frac{N}{2}}\alpha^{-\frac{N^2}{4(N+2)}}e^{(t-s)\tilde\omega_\rho}(t-s)^{-\frac{N^2}{2(N+2)}}\quad \hbox{for all }(t,s)\in\Delta,
 	\end{equation}
 	where
 	\[
 	\tilde\omega_\rho:=\left(\omega_{\rho}+\alpha_{\rho,1}+\alpha_{\rho,1}^*+\frac{2(N+2)}{N}[\alpha_{\rho,2}+\alpha_{\rho,2}^*]\right)
 	\]
and $\alpha_{\rho,i},\alpha^*_{\rho,i}$ are the constants that appear in the linear quasi-contractivity estimate. Thus, if additionally $\omega_\rho, \alpha_{\rho,i}, \alpha_{\rho,i}^*, i=1,2,$ can be chosen in such a way that 
\begin{equation}\label{eq:omegarhoomega0}
\tilde\omega_\rho\le \omega_0(1+\rho^2)
\end{equation}
 	for some constant $\omega_0>0$ independent of $\rho$, then $\U$ satisfies Gaussian bounds. 
 \end{theorem}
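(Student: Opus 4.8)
The plan is to combine the ultracontractivity result for the perturbed families, which is already available from Theorem~\ref{Thm2: Ultracontractivity}, with the Davies-trick characterization of Gaussian bounds in Theorem~\ref{Thm: Gaussian bounds}. The strategy mirrors the proof of Theorem~\ref{Main 1 TheoremGaussianEstimate}, only replacing the Nash-inequality branch (Theorem~\ref{Theorem: Ultracontractivity via Nash ineq}) by the Gagliardo--Nirenberg branch (Theorem~\ref{Thm2: Ultracontractivity}). First I would invoke Lemma~\ref{Lemma: associated perturbed closed form}: under the standing hypotheses that $V$ is $W$-invariant and that the perturbed forms $\fra^\rho$ remain in $\Formm([0,T];V,H)$ with the uniform coercivity constant $\alpha$ from~\eqref{eq:rhoellipt}, the perturbed evolution family $\U_\rho$ is genuinely the evolution family associated with $\fra^\rho$. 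This is what lets me treat each $\U_\rho$ on the same footing as $\U$ in the earlier theorems.

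Next I would apply Theorem~\ref{Thm2: Ultracontractivity} to each $\fra^\rho$ in place of $\fra$. The hypotheses are met: $V$ satisfies the Gagliardo--Nirenberg inequality with the same $C_G,N>0$ (this property is intrinsic to the space $V$ and untouched by the conjugation $M_\rho$), and by assumption both $\U_\rho$ and $\cev{\U_\rho^*}$ are linearly quasi-contractive, with constants $\alpha_{\rho,1},\alpha_{\rho,2},\alpha^*_{\rho,1},\alpha^*_{\rho,2}$. Since the coercivity constant is the uniform $\alpha$, the conclusion of Theorem~\ref{Thm2: Ultracontractivity} reads
\[
\|U_\rho(t,s)\|_{\L(L^{1}(\Omega),L^{\infty}(\Omega))}\leq C_G^{\frac{N}{2}}\alpha^{-\frac{N^2}{4(N+2)}}e^{(t-s)\tilde\omega_\rho}(t-s)^{-\frac{N^2}{2(N+2)}}
\]
with $\tilde\omega_\rho$ exactly as displayed in the statement. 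This establishes the first claimed inequality verbatim; here I must be slightly careful that the rescaling step in the proof of Theorem~\ref{Thm2: Ultracontractivity} (where one assumes contractivity by rescaling with $e^{-\omega(t-s)}$) feeds the factor $\omega_\rho$ rather than $\omega$ into $\tilde\omega_\rho$, which is why $\omega_\rho$ appears as the leading summand.

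For the second assertion I would feed the displayed ultracontractivity bound into Theorem~\ref{Thm: Gaussian bounds}. Set $n:=\frac{N^2}{N+2}$ so that $(t-s)^{-N^2/(2(N+2))}=(t-s)^{-n/2}$, and set $c:=C_G^{N/2}\alpha^{-N^2/(4(N+2))}$. The extra hypothesis~\eqref{eq:omegarhoomega0}, namely $\tilde\omega_\rho\le\omega_0(1+\rho^2)$ for a $\rho$-independent $\omega_0>0$, is precisely what converts $e^{(t-s)\tilde\omega_\rho}$ into $e^{\omega_0(1+\rho^2)(t-s)}$, matching condition~(i) of Theorem~\ref{Thm: Gaussian bounds} with $\tilde\omega=\omega_0$. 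Thus~\eqref{Thm:eq1 Gaussian bounds} holds uniformly in $\rho\in\R$ and $\psi\in W$, and the equivalence in Theorem~\ref{Thm: Gaussian bounds} yields the Gaussian bounds for $\U$.

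The step I expect to carry the real weight is not any of the above bookkeeping but the verification, in concrete applications, of~\eqref{eq:omegarhoomega0}: one must control how the linear-quasi-contractivity constants $\alpha_{\rho,i},\alpha^*_{\rho,i}$ and the coercivity defect $\omega_\rho$ grow with $\rho$, and show their prescribed combination $\tilde\omega_\rho$ grows no faster than quadratically in $\rho$. Within the proof itself, however, everything is a citation: the only genuine subtlety is ensuring the constants produced by Theorem~\ref{Thm2: Ultracontractivity} are uniform in $\rho$ (the prefactor $C_G^{N/2}\alpha^{-N^2/(4(N+2))}$ is manifestly $\rho$-independent because $C_G,N,\alpha$ are), so that the $\rho$-dependence is entirely isolated in the exponential, where~\eqref{eq:omegarhoomega0} can absorb it.
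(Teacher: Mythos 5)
Your proposal is correct and takes essentially the same route as the paper, whose own proof is a one-line citation saying the assertion follows as in Theorem~\ref{Main 1 TheoremGaussianEstimate}, now based on Theorem~\ref{Thm2: Ultracontractivity} (applied to each $\fra^\rho$, identified with $\U_\rho$ via Lemma~\ref{Lemma: associated perturbed closed form}) together with Theorem~\ref{Thm: Gaussian bounds}. Your bookkeeping --- $n=\frac{N^2}{N+2}$, $c=C_G^{N/2}\alpha^{-\frac{N^2}{4(N+2)}}$ uniform in $\rho$, and the absorption of $e^{(t-s)\tilde\omega_\rho}$ into $e^{\omega_0(1+\rho^2)(t-s)}$ via~\eqref{eq:omegarhoomega0} --- is exactly the intended verification of condition (i) of Theorem~\ref{Thm: Gaussian bounds}.
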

\begin{proof}
	The assertion can be proved similarly to Theorem~\ref{Main 1 TheoremGaussianEstimate}, based in this case on Theorem \ref{Thm: Gaussian bounds} and Theorem \ref{Thm2: Ultracontractivity}.
\end{proof}




\section{Applications}\label{sec:appl}
\subsection{Diffusion equations on dynamic graphs}\label{sec:diff-evolving}


Consider a (finite or infinite) simple graph $\mG$ with vertex set $\mV$ and edge set $\mE$, with $V$ vertices and $E$ edges (i.e., $V=|\mV|$ and $E=|\mE|$). Fix an orientation of $\mG$ and introduce the $V\times E$ (signed) \textit{incidence matrix} $\mathcal I=(\iota_{\mv\me})$ of $\mG$ by
\[
{\iota}_{\mv \me}:=\left\{
\begin{array}{ll}
-1 & \hbox{if } \mv \hbox{ is initial endpoint of } \me, \\
+1 & \hbox{if } \mv \hbox{ is terminal endpoint of } \me, \\
0 & \hbox{otherwise}.
\end{array}\right.
\]
Let $m\in \ell^\infty(\mE)$ be a family of edge weights and consider the (weighted) Laplacian $\mathcal L:=\mathcal I \mathcal M \mathcal I^T$ on $\ell^2(\mV)$, where $0\le \mathcal M:=\diag(m(\me))_{\me\in \mE}$. ($\mathcal L$ can be shown to be independent of the orientation.)

We assume that $\mG$ is \textit{uniformly locally finite}, i.e., there is $M<\infty$ such that $\sum_{\me\in \mE} |\iota_{\mv\me}|\le M$ for all $\mv\in\mV$: in this case $\mathcal I$ is a bounded linear operator from $\ell^2(\mE)$ to $\ell^2(\mV)$~\cite[Lemma.~4.3]{Mug14}, hence
$\mathcal L$ is a positive semi-definite, bounded self-adjoint operator on $\ell^2(\mV)$: we can thus take $V=H=V'=\ell^2(\mV)$. It is well-known that the semigroup generated by $-\mathcal L$ is sub-Markovian, see e.g.~\cite[\S~6.4.1]{Mug14}; if the graph is finite, then it is Markovian and stochastic, too.

Let us now regard $\mG$ as a reference graph (one may e.g.\ think of a complete graph, or else of a lattice graph $\mathbb Z^d$) and consider a family $(\mG(t))_{t\in [0,T]}$ of modifications of $\mG$ -- in other word, a graph-valued dynamical system, or \emph{dynamic graph}~\cite{Sil08}. 
We describe the dependence of $\mG(t)$ on $t$ by introducing a measurable function $[0,T]\ni t\mapsto m(t)\in \ell^\infty(\mE)$:
this allows e.g.\ for 
sudden switching of edges is allowed (as in the case of adjacency driven by a Poisson process). In particular, we consider the non-autonomous form $\fra$ defined by
\[
a(t;u,v):=\left(\diag(m(t,\me))\mathcal I^T u \mid \mathcal I^T v\right)_{\ell^2(\mE)},\qquad t\in [0,T],\ u,v\in \ell^2(\mV).
\]
It is easy to see that $\fra\in \Formm([0,T];\ell^2(\mV),\ell^2(\mV))$ and the associated operators are the Laplacians $(\mathcal L_{\mG(t)})_{t\in [0,T]}$.
 (We are not assuming boundedness from below on $m$: this is made unnecessary by the boundedness of the operator $\mathcal L_{\mG(t)}$ for all $t$; in fact, even negative weights and hence signed graphs are allowed.) We deduce by Proposition~\ref{cor:easytho} that the non-autonomous Cauchy problem
\[
 \begin{split}
\dot{u}(t,\mv)+\mathcal L_{G(t)}u(t,\mv)&=0,\quad t\ge s, \ \mv\in\mV,\\
u(s,\mv)&=x_\mv,\quad \mv\in \mV
\end{split}
\]
is governed by an evolution family on $\ell^2(\mV)$; in fact, for all $x\in \ell^2(\mV)$ the above equation enjoys backward well-posedness, too, and the unique solution $u$ is of class $H^1(\R;\ell^2(\mV))$: the corresponding evolution family $(U(t,s))_{(t,s)\in \R^2}$ can be defined via product integrals. As observed in~\cite[Example~7.3]{AreDie18} $U(t,s)$ is sub-Markovian for all $t,s$; in particular, $\U$ extrapolates to a consistent family of contractive evolution families on $\ell^p(\mV)$ for all $p\in [1,\infty]$. The evolution family is also positivity improving, and additionally stochastic if $\mG$ is finite. Furthermore, \cite[Thm.~2.6]{Laa18} yields that the evolution family is immediately norm-continuous if $[0,T]\ni t\mapsto m(t)\in \ell^\infty(\mE)$ is H\"older continuous with exponent $\alpha>1/2$; by~\cite[Thm.~7.4.1]{Fat83} it is even holomorphic if additionally $[0,T]\ni t\mapsto m(t)\in \ell^\infty(\mE)$ extends to a holomorphic function on an open convex neighborhood in $\mathbb C$ of $[0,T]$.
 
To conclude, let us study Laplacians on subgraphs $\mG_t$ induced by subsets $\mV_t$ of $\mV$ as in~\cite[Chapt.~8]{Chu97} in the unweighted case ($m(t,\me)\in \{0, 1\}$). Even in the autonomous case, Laplacian on (non-trivial) subgraphs of $\mG$ generate semigroup that neither are dominated by, nor dominate $(e^{-r\mathcal L_\mG})_{r\ge0}$: this can e.g.\ be seen by applying~\cite[Cor.~2.22]{Ouh05}. Things change, however, if Dirichlet boundary conditions are imposed, e.g., if $\mathcal L$ on $\mG$ is restricted to 
\[
D_t:=\{f\in \ell^2(\mV): f_{|\mV\setminus \mV_t}\equiv 0\},\qquad t\in [0,T].
\]
Because $D_t$ is for all $t$ a generalized ideal of $V=\ell^2(\mV)$, the associated Laplacian $\mathcal L_{|D_t}$ generates for all $t$ a semigroup $(e^{-r\mathcal L_{|D_t}})_{r\ge 0}$ which is -- again by~\cite[Cor.~2.22]{Ouh05} -- dominated by $(e^{-r\mathcal L})_{r\ge 0}$. Therefore, by Proposition~\ref{cor:easytho} and Proposition~\ref{prop:domin} the evolution family $(U(t,s))_{(t,s)\in \Delta}$ satisfies
\begin{equation}\label{eq:domingra}
|U(t,s)f|\le e^{-(t-s)\mathcal L}|f|\qquad \hbox{for all $(t,s)\in\Delta$ and $f\in H$}.
\end{equation}

We have seen in the introduction that if $A(t)\equiv A$, then the evolution family that governs the non-autonomous problem is given by $U(t,s)=e^{(t-s)A}$, 
hence $\U$ satisfies Gaussian bounds if and only if so does $(e^{rA})_{r\ge 0}$. We can now show a less trivial instance of Gaussian-type bounds.

Gaussian-type kernel estimates on $(e^{-t\mathcal L_{\mathcal G}})_{t\ge 0}$ have been proved in~\cite{Del99} for certain classes of $\mG$. Thus, if $(\mG_t)_{t\in [0,T]}$ is a family of subgraphs of a reference graph $\mG$ with measurable $t\mapsto m(t,\me)$ for all $\me\in\mE$, and if $\U$ is the evolution family associated with the corresponding Laplacians $-\mathcal L_{|D_t}$, then~\eqref{eq:domingra} yields a Gaussian-type kernel estimate. If we e.g.\ take $\mG$ to be $\Z$, then
\[
0\le \Gamma(t,s;n_1,n_2)\le G(t-s;n_1,n_2),\qquad (t,s)\in\Delta,\ n_1,n_2\in \Z,
\]
where 
\[
G(r;n_1,n_2):=\frac{1}{2\pi} \int_{-\pi}^\pi \cos((n_1-n_2)q)e^{-2r(1-\cos q)}\d q
\] 
is the heat kernel on $\Z$ explicitly computed e.g.\ in~\cite[Exa.~12.3.3]{Dav07}.

\subsection{Time-dependent pageranks}

Let us study a model similar to that of Example~\ref{sec:diff-evolving}: it is based on an idea proposed in~\cite{Chu07}, cf.~\cite{Gle15} for later developments, where the connectivity of $\mG$ describes the  links within a server network -- possibly the whole World Wide Web.

We thus consider an orientation of a \textit{finite} complete graph (i.e., a graph such that either $(\mv,\mw)\in \mE$ or $(\mw,\mv)\in \mE$ for any $\mv,\mw\in\mV$ with $\mv\ne \mw$). 
As in~\ref{sec:diff-evolving}, we assign a weight $m$ to each edge: if e.g.\ $m(t,\me)\in \{0,1\}$ for all $\me\in\mE$ and all $t\in[0,T]$, then we are effectively shutting off/switching on certain links in the considered network. We then consider the matrix 
\begin{equation}\label{eq:pagerdef}
A:=\mathcal I \mathcal M(\mathcal I^-)^T (\mathcal D^{\rm out})^{-1}
\end{equation}
where $\mathcal I$ is again the incidence matrix of $\mG$ (see Example~\ref{sec:diff-evolving}), $\mathcal I^-:=(\iota^-_{\mv\me})$ is its negative part, $\mathcal M=\diag(m(\me))_{\me\in \mE}$, and $\mathcal D^{\rm out}:=\diag (\deg^{\rm out}(\mv))_{\mv\in \mV}$, where $\deg^{\rm out}(\mv):=\sum_{\me\in \mE}|\iota^-_{\mv\me}m(\me)|$.

Then, $A$ defines a so-called \textit{heat kernel pagerank} $e^{-rA}x
$ of $\mG$ with parameters $r$ and $x$: here $r$ is a positive time and $x$ a probability distribution on $\mV$, i.e., $x\in \R^\mV$, $x_\mv\ge 0$ for all $\mv\in \mV$ and $\|x\|_1=1$. The rationale behind this definition is that $A$ is a column stochastic matrix, hence $(e^{-rA})_{r\ge 0}$ is a stochastic semigroup and $e^{-rA}f$ is thus again a probability distribution for all $r\ge 0$, which can be used to measure the relevance of a certain node within a network in a way similar to Google's classical PageRank, cf.~\cite[\S~2.1.7.3]{Mug14}.

We can now consider a measurable function $t\mapsto (m(t,\me))_{\me\in \mE}$ and accordingly a time-dependent matrix family $(A(t))_{t\in [0,T]}$ as in~\eqref{eq:pagerdef} these matrices will in general not be symmetric, but in view of finiteness of $\mV$ they are certainly associated with a form $\fra\in \Formm([0,T];\ell^2(\mV),\ell^2(\mV))$. Accordingly, in view of Proposition~\ref{them: stochastic EVF} the associated evolution family $(\mathcal U(t,s))_{(t,s)\in \Delta}$ consists of stochastic operators and hence $\mathcal U(t,s)f$ is a probability distribution on $\mV$ for all $(t,s)\in \Delta$ and all probability distributions $f\in \R^\mV$.

There is a correspondence between linear transport differential equations on networks and flows on their underlying graphs~\cite{Dor05}: accordingly, our results also extend to the space-continuous case. The well-posedness result in~\cite[\S~6]{Bay12} -- which relies on the assumption that the dependence of the graph on time is absolutely continuous -- can thus be strengthened: we omit the details.



\subsection{Black--Scholes equation with time-dependent volatility}\label{sec:blacks}

The Cauchy problem consisting of the backward parabolic equation
\begin{equation}\label{eq:bs-m}
u_t (t,x) + \frac12 x^2 \sigma^2 u_{xx} (t,x) + rxu_x (t,x) - ru(t,x) = 0,\qquad t\in [0,\tau],\ x\in ]0,\infty[,
\end{equation}
along with the final value assignment
\[
u(\tau,x)=h(x)\qquad x\in ]0,\infty[
\]
was derived in~\cite{BlaSch73} and is currently considered among the main mathematical tool in the pricing theory of European options: the positive constants $\sigma,r$ describe volatility and interest rate of the system, respectively, whereas $\tau$ is the maturity time of an option.

An effective variational approach to the relevant operator appearing in the Black--Scholes equation has been discussed in~\cite{Ein08}: it is based on studying the sesquilinear form
\begin{equation}\label{eq:eineform}
\begin{split}
a (u,v)&:=\frac{\sigma^2}{2} \int_0^\infty x^2 u'(x) \overline{v'(x)} \ \d x\\
&\qquad +(\sigma^2-r) \int_0^\infty x u'(x)\overline{v(x)}\ \d x+r \int_0^\infty u(x)\overline{v(x)}\ \d x,\qquad u,v\in V,
\end{split}
\end{equation}
defined on the form domain
\[
V:=\{u\in W^{1,1}_{loc}]0,\infty[\cap L^2]0,\infty[:\ \id \cdot u'\in L^2]0,\infty[ \},
\]
which is a Hilbert space with respect to the inner product
\[
(u|v)_V:=\int_0^\infty x^2 u'(x)\overline{v'(x)}\ \d x+\int_0^\infty u(x)\overline{v(x)}\ \d x.
\]
Then it was proved in~\cite[\S~7.2]{Ein08} that $V$ is continuously dense in $L^2]0,\infty[$ and that furthermore
\begin{equation}\label{eq:einemann}
\begin{split}
|a(u,v)|&\le \left(\frac{\sigma^2}{2}+|\sigma^2-r|+|r|\right)\|u\|_V \|v\|_V\\
\Re a(u,u)+\left(\sigma^2-\frac{3r}{2}\right) \|u\|^2_{L^2}&=\frac{\sigma^2}{2}\|u\|^2_V
\end{split}
\qquad\hbox{for all }u\in V,
\end{equation}
i.e., $a$ is bounded and elliptic; and $\Re a(u,u)\ge 0$ if $3r\ge \sigma^2$.

The original Black--Scholes-theory assumes $\sigma$ to be time-independent, but is rather unrealistic and has been questioned ever since: we mention the celebrated Heston model~\cite{Hes93}, which leads to a non-autonomous PDE similar to~\eqref{eq:bs-m}, based on the assumption that the volatility evolves following a certain Brownian-like motion. This justifies the study of
\[
u_t (t,x) + \frac12 x^2 \sigma^2(t) u_{xx} (t,x) + rxu_x (t,x) - ru(t,x) = 0,\qquad t\in [0,\tau],\ x\in ]0,\infty[,
\]
with measurable dependence $t\mapsto \sigma(t)$. The computations in~\eqref{eq:einemann} show that if $0<\sigma_0\le \sigma(t)\le \sigma_1$ for a.e.\ $t\in [0,\tau]$, then $\fra\in \Formm([0,T];V;H)$, where $\fra$ is defined by
\[
\begin{split}
a (t;u,v)&:=\frac{\sigma^2(t)}{2} \int_0^\infty x^2 u'(x) v'(x) \ \d x\\
&\qquad +(\sigma^2(t)-r) \int_0^\infty x u'(x)v(x)\ \d x+r \int_0^\infty u(x)v(x)\ \d x,\qquad u,v\in V.
\end{split}
\]
Furthermore, the semigroup associated with $a$ is quasi-contractive and sub-Markovian: we deduce from Proposition~\ref{cor:easytho} that such non-autonomous Black--Scholes equation is governed by a sub-Markovian evolution family $\U$ that extrapolates to all $L^p]0,\infty[$ spaces, $p\in [2,\infty]$. In view of Proposition~\ref{cor:easytho}  we can also apply~\cite[Thm.~7.2.5]{Ein08} and deduce that $\U$ leaves invariant the order interval $]-\infty,\id]_H$. By~\cite[Rem.~7.2.4]{Ein08} $\min\{\log,0\}\in V$, hence $V\not\hookrightarrow L^\infty]0,\infty[$; however, it is unclear whether $V$ satisfies a Nash or Gagliardo--Nirenberg inequality, which would imply ultracontractivity of the evolution family.

 (A manifold of financial models exist that display a similar mathematical structure, albeit their meaning is different: the popular Cox--Ingersoll--Ross along with several other so-called \textit{short-rate models} surveyed in~\cite{ChaKarLon92} involve time-dependent $\sigma$ and/or $r$ and can be discussed with only minor variations to our treatment above.)

\subsection{Second-order elliptic operators on networks}\label{sec:networks}

With the purpose of introducing a differential operator on a network-like structure, we consider like in Example~\ref{sec:diff-evolving} a possibly infinite, but uniformly locally finite graph $\mG=(\mV,\mE)$ and identify each edge $\me\in\mE$ with an interval $[0,1]$. In other words, we are considering a collection of copies of $[0,1]$ and gluing them in a graph-like fashion: we thus obtain what are often called \textit{metric graphs} or \textit{networks} in the literature~\cite{BerKuc13,Mug14}.  (For the sake of simplicity we are going to assume such a network to be connected.)
The history of non-autonomous diffusion equations on networks goes back at least to pioneering investigations by von Below, Lumer, and Schnaubelt: well-posedness results could be proved in~\cite{Bel88,LumSch99}, further results on long-time asymptotics have been deduced in~\cite{AreDieKra14}.

We are going to apply in this context the theory developed in the previous sections: on each interval $\me\simeq [0,1]$ we consider the operator family
\[
A_\me(t):u_\me\mapsto -\frac{\d}{\d x}\left(c_e(t) \frac{\d}{\d x}u_e\right)-p_e(t) u_e,\qquad t\in [0,T].
\]
We assume the coefficients
\[
[0,T]\ni t\mapsto c_\me(t)\in L^\infty(0,1;\ell^\infty(\mE))\quad \hbox{and}\quad [0,T]\ni t\mapsto p_\me(t)\in L^1(0,1;\ell^\infty(\mE))
\]
to be measurable: this defines in a natural way an operator $A$ with domain $D(A):=\widetilde{H^2}(\mathcal G):=H^2(0,1;\ell^2(\mE))$ on the Hilbert space
$H:=L^2(\mathcal G):=L^2(0,1;\ell^2(\mE))$.
We will additionally assume that the operator family is uniformly elliptic, i.e.,
\begin{equation}\label{Eq: uniformly ellipticityGraph}
c_\me(t,x)\ge \gamma \qquad \hbox{for all }\me\in \mE\hbox{ and a.e. }x\in (0,1),\ t\in [0,T],
\end{equation}
for some $\gamma>0$.
In order to reflect the topology of the graph, transmission conditions in the vertices are required: the most common conditions are usually referred to as \textit{continuity/Kirchhoff} and amount to asking that 
\begin{itemize}
\item $u$ is continuous, i.e., the boundary values of $u_\me$ and $u_\mf$ agree whenever evaluated at endpoints of the intervals $\me,\mf$ that are glued together in the network $\mathcal G $ (continuity); 
\item $u$ satisfies a Kirchhoff-type rule, i.e., at any vertex $\mv$ the sum over all neighboring edges of the normal derivatives evaluated at $\mv$ vanishes.
\end{itemize}

However, more boundary conditions are conceivable: a parametrization of an infinite class of boundary conditions that fits well the setting of sesquilinear forms has been discussed in~\cite[\S~6.5.1]{Mug14}, based on the finite case treated in~\cite[Thm.~5]{Kuc04}. Fix a closed subspace $Y$ of the Hilbert space $\ell^2(\mE)\times \ell^2(\mE)$, let $(\Sigma(t))_{t\in [0,T]}$ be a family of bounded linear operators on $Y$, and let
 \[
 \underline{u}:=\begin{pmatrix}(u_{e}(0))_{\me\in\mE}\\ (u_{e}(1))_{\me\in\mE}\end{pmatrix},\quad 
\underline{\underline{u}}:=\begin{pmatrix}(-c_\me (0)u'_{\me}(0))_{\me\in\mE}\\ (c_\me(1) u'_{\me}(1))_{\me\in\mE})\end{pmatrix}.
\]
We can then consider the non-autonomous form $\fra$ defined by
\[
a(t;u,v)=
\int_0^1 \left[ \left(c_\me(t,x)u'_\me(x)|v'_\me(x)\right)_{\ell^2(\mE)}+\left(p_\me(t,x) u_\me(x)| v_\me(x)\right)_{\ell^2(\mE)}\right]\textrm{d}x +(\Sigma(t) \underline{u}|\underline{v})_Y
	\]
	with time-independent form domain
	\[
	H^1_Y(\mathcal G):=\left\{u\in \bigoplus_{\me\in\mE}H^1(0,1;\ell^2(\mE)):\underline{u}\in Y\right\}.
\]
Then the conditions in the vertices satisfied by functions in the domain of each operator $A(t)$ associated with $\fra$ can be written in a compact form as
\begin{equation}\label{eq:Ycond}
\underline{u}\in Y\quad \hbox{and}\quad \underline{\underline{u}}+\Sigma(t) \underline{u}\in Y^\perp.
\end{equation}
We finally assume that for some $P,S>0$
\[
\|p(t)\|_{L^1}\le P\quad\hbox{and}\quad \|\Sigma(t)\|_{\mathcal L(Y)}\le S\qquad\hbox{for all }t\in [0,T].
\]
Using an obvious extension of \cite[Lemma 6.22]{Mug14} to non-autonomous forms we see that $\fra\in\Formm([0,T];V,H)$. Our abstract results in the previous sections hence yield the following.
\begin{proposition}\label{prop:netw-well}
Under the above assumptions on the coefficients $c_\me,p_\me$, the space $Y$, and the operators $\Sigma$, the form $\fra$ is associated with a strongly continuous evolution family $\U$ on $H$.
If $p_\me(t)\ge 0$ and $\Sigma(t)$ is accretive for a.e.\ $t\in [0,T]$, then
$\mathcal U$ is contractive.

If all these coefficients are defined on the whole interval $[0,\infty[$, then $\U$ extends to an evolution family on $\{(t,s):0\le s\le t<\infty\}$.
\end{proposition}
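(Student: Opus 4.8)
The plan is to verify that $\fra$ meets the three defining requirements of $\Formm([0,T];V,H)$ and then to read off each of the three assertions from the general machinery assembled in Section~\ref{s2}. First I would check \emph{measurability}~\eqref{measurability}: for fixed $u,v\in V=H^1_Y(\mathcal G)$ the map $t\mapsto a(t;u,v)$ is a sum of three integrals built from the measurable coefficient maps $t\mapsto c_\me(t)$, $t\mapsto p_\me(t)$, and $t\mapsto\Sigma(t)$, so dominated convergence yields its measurability. For \emph{boundedness}~\eqref{boundedness} I would estimate termwise: the principal part is controlled by $\|c\|_\infty\|u\|_V\|v\|_V$; the zeroth-order term is handled by the vector-valued one-dimensional embedding $H^1(0,1;\ell^2(\mE))\hookrightarrow C([0,1];\ell^2(\mE))$ together with $\|p(t)\|_{L^1}\le P$; and the boundary term is bounded using $\|\Sigma(t)\|_{\mathcal L(Y)}\le S$ and the continuity of the trace $u\mapsto\underline u$ from $V$ into $Y$.

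For \emph{$H$-ellipticity}~\eqref{ellipticity} I would start from the lower bound $\Re a(t;u,u)\ge\gamma\|u'\|_{L^2}^2-P\|u\|_\infty^2-S\|\underline u\|_Y^2$ and absorb the two negative contributions into the principal term via the interpolation inequality $\|u\|_\infty^2\le\varepsilon\|u'\|_{L^2}^2+C_\varepsilon\|u\|_{L^2}^2$ and an analogous trace estimate, choosing $\varepsilon$ small enough that $\gamma>0$ dominates. This produces~\eqref{ellipticity} with some $\alpha>0$ and $\omega\ge0$ depending only on $\gamma,P,S$ and the (time-independent) embedding constants. These three verifications are exactly the content of the non-autonomous extension of the cited structural lemma, so indeed $\fra\in\Formm([0,T];V,H)$; the existence of the associated strongly continuous evolution family $\U$ on $H$ is then immediate from the Lions well-posedness theory recalled in Section~\ref{s2}.

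For the contractivity statement I would observe that when $p_\me(t)\ge0$ and $\Sigma(t)$ is accretive, every summand in $\Re a(t;u,u)=\int_0^1(c_\me(t)|u_\me'|^2+p_\me(t)|u_\me|^2)\,dx+\Re(\Sigma(t)\underline u|\underline u)_Y$ is nonnegative for a.e.\ $t$, so the form is accretive, i.e.~\eqref{eq:accretivity} holds with $\omega\equiv0$. Proposition~\ref{prop:ues} then gives $\|U(t,s)\|_{\mathcal L(H)}\le e^{\int_s^t 0\,dr}=1$, which is the asserted contractivity. Finally, the extension to $[0,\infty[$ follows because the constants $M,\alpha,\omega$ produced above depend only on the uniform data $\gamma,P,S$ and on time-independent embedding constants; hence if the coefficients are defined on the whole half-line the same verification shows $\fra\in\Formm([0,\infty[;V,H)$, and the construction of a strongly continuous evolution family on $\{(t,s):0\le s\le t<\infty\}$ recalled at the end of Section~\ref{s2} applies verbatim.

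The only genuinely technical point — and the step I expect to be the main obstacle — is the pair of absorption estimates on which both boundedness and $H$-ellipticity rest: one must treat the zeroth-order term $\int p|u|^2$ and the boundary term $(\Sigma\underline u|\underline u)_Y$ as lower-order perturbations of the coercive principal form $\int c|u'|^2$. This hinges on the interpolation and trace inequalities above together with the strict uniform lower bound $c_\me(t,x)\ge\gamma>0$, and on a possibly infinite graph it requires that the relevant vector-valued one-dimensional embeddings hold with edge-independent constants, which is guaranteed by the standing assumption that $\mathcal G$ is uniformly locally finite. Once these uniform estimates are in hand, the remaining computations are routine.
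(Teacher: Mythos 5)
Your proposal is correct and follows essentially the same route as the paper, which proves this proposition simply by invoking ``an obvious extension of \cite[Lemma 6.22]{Mug14} to non-autonomous forms'' to conclude $\fra\in\Formm([0,T];V,H)$ and then appealing to the abstract results of Section~\ref{s2} (Lions' theory for existence, Proposition~\ref{prop:ues} with $\omega\equiv 0$ for contractivity, and the half-line construction for the last claim); your term-wise boundedness and absorption estimates are precisely the content of that cited lemma. One small remark: the edge-independence of the embedding and trace constants, which you flag as the main technical obstacle and attribute to uniform local finiteness, actually comes for free from the identification of every edge with the unit interval, since the relevant spaces are Bochner spaces $H^1(0,1;\ell^2(\mE))$ over the single interval $[0,1]$ and the scalar interpolation inequality applies verbatim there.
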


We denote by $P_Y$ the orthogonal projector of $\ell^2(\mE)\times \ell^2(\mE)$ onto $Y$; the latter inherits the lattice structure of $\ell^2(\mE)\times \ell^2(\mE)$. Owing to Proposition~\ref{proposition Linftycontractivity}  we can formulate the following generalization of~\cite[Thm.~6.85]{Mug14} (see also~\cite[Prop.~5.1]{CarMug09}).

%

%
%

\begin{corollary}\label{cor:netw-subm}
(1) If $e^{-r\Sigma(t)}$ (for all $r\ge 0$ and a.e.\ $t\in [0,T]$) and $P_Y$ are positive, then $\U$ is positive. If additionally $p_\me\equiv 0$, $e^{-r\Sigma(t)}$ is (sub-)stochastic and ${\bf 1}\in Y$, then $\U$ is (sub-)stochastic.

(2) Let $p_\me(t,x)\ge 0$ for all $\me\in \mE$ and a.e.\ $t\in (0,T)$ and $x\in (0,1)$. If $e^{-r\Sigma(t)}$ (for all $r\ge 0$ and a.e.\ $t\in [0,T]$) and $P_Y$ are $\ell^\infty$-contractive, then $\U$ is $L^\infty(0,1;\ell^2(\mE))$-contractive.

(3) Under the assumptions of (2), let additionally $e^{-r\Sigma(t)^*}$ be $\ell^\infty$-contractive for all $r\ge 0$ and a.e.\ $t\in [0,T]$. Then $\U$ is completely contractive; accordingly, it extrapolates  to a strongly continuous, contractive evolution family on  all spaces $L^p(0,1;\ell^2(\mE))$, $p\in [1,\infty]$.
\end{corollary}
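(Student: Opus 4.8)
The plan is to read all three assertions as applications of the abstract lattice criteria of Section~\ref{sec:lattice}, specialized to the concrete form on $\mathcal G$. In each case I would split the sesquilinear form into its \emph{interior} part $\int_0^1[(c_\me(t)u'_\me|v'_\me)+(p_\me(t)u_\me|v_\me)]\,\d x$ and its \emph{boundary} part $(\Sigma(t)\underline u|\underline v)_Y$, treat the two contributions separately, and then invoke Proposition~\ref{proposition Linftycontractivity}, Proposition~\ref{prop:L1contractivity}, or Proposition~\ref{them: stochastic EVF}. The decisive preliminary step, needed throughout, is \emph{form-domain invariance}: the relevant truncations — the positive part $(\cdot)^+$ for positivity, and $(1\wedge|\cdot|)\sgn(\cdot)$ for $L^\infty$-contractivity — must map $H^1_Y(\mathcal G)$ into itself. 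That they preserve $\bigoplus_{\me\in\mE}H^1(0,1)$ and commute with the trace maps $u\mapsto\underline u$ is classical; the point is that the truncated boundary vector must remain in $Y$. This is exactly where positivity (resp.\ $\ell^\infty$-contractivity) of $P_Y$ enters: it forces $Y$ to be conjugation-invariant and invariant under the corresponding truncation of $\ell^2(\mE)\times\ell^2(\mE)$, so that $\underline{(u^+)}=(\underline u)^+\in Y$, and likewise for the $L^\infty$-truncation.

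For assertion (1) I would verify the three conditions of Proposition~\ref{proposition Linftycontractivity}(1) for $w=\Re v$. The interior gradient term $\int_0^1(c_\me w^{+\prime}|w^{-\prime})\,\d x$ vanishes because $w^{+\prime}w^{-\prime}=0$ a.e., and $\int_0^1(p_\me w^+|w^-)\,\d x$ vanishes because $w^+w^-=0$ pointwise — both regardless of the sign of $p_\me$. The boundary term $\Re(\Sigma(t)\underline{w}^+|\underline{w}^-)_Y$ is nonpositive precisely because positivity of $e^{-r\Sigma(t)}$, via the single-time equivalence (i)$\Leftrightarrow$(iii) of Proposition~\ref{cor:easytho} applied to the bounded generator $\Sigma(t)$ on the Hilbert lattice $Y$, yields $\Re(\Sigma(t)g^+|g^-)_Y\le0$; here positivity of $P_Y$ guarantees that $g^\pm$, computed in the ambient lattice, again lie in $Y$, so the criterion applies with $g=\underline w$. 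Realness follows likewise from positivity of $e^{-r\Sigma(t)}$ and of $P_Y$. For the stochastic refinement I would apply Proposition~\ref{them: stochastic EVF}: with $p_\me\equiv0$ the interior term $a(t;\Re u,\mathbf 1)$ collapses to $\int(c_\me(\Re u)'|\mathbf 1')=0$, while $(\Sigma(t)\underline{\Re u}|\underline{\mathbf 1})_Y=0$ is the infinitesimal form of mass conservation $\mathbf 1^\top e^{-r\Sigma(t)}=\mathbf 1^\top$ (using $\mathbf 1\in Y$). The sub-stochastic case is obtained by coupling positivity with $L^1$-contractivity from Proposition~\ref{prop:L1contractivity}(1), reading sub-stochasticity of $e^{-r\Sigma(t)}$ as $\ell^\infty$-contractivity of $e^{-r\Sigma(t)^*}$.

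Assertion (2) is the $L^\infty$-analogue: here I would check Proposition~\ref{proposition Linftycontractivity}(2) for $u_1:=(1\wedge|v|)\sgn v$ and $u_2:=(|v|-1)^+\sgn v$. The interior contribution splits into $\Re\int(c_\me u_1'|u_2')\ge0$, which is the standard pointwise sign computation using $c_\me\ge\gamma>0$, and $\Re\int(p_\me u_1|u_2)\ge0$, where $u_1\overline{u_2}=(1\wedge|v|)(|v|-1)^+\ge0$ and now $p_\me\ge0$ is used. The boundary term is handled as before, transferring $\ell^\infty$-contractivity of $e^{-r\Sigma(t)}$ to $\Re(\Sigma(t)g_1|g_2)_Y\ge0$ and using $P_Y$ $\ell^\infty$-contractive for domain invariance. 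For assertion (3) I would obtain $L^1$-contractivity by applying the very same argument to the returned adjoint form $\cev{\fra^*}$ — whose boundary operator is $\Sigma(t)^*$ and whose projector is again the self-adjoint $P_Y$ — so that the hypothesis on $e^{-r\Sigma(t)^*}$ yields $L^\infty$-contractivity of $\cev{\U^*}$, hence $L^1$-contractivity of $\U$ by~\eqref{eq:retadjform}; complete contractivity then follows, and extrapolation to every $L^p(0,1;\ell^2(\mE))$, $p\in[1,\infty]$, is automatic by Riesz--Thorin interpolation as in the remark following Definition~\ref{submarkovian EVF}.

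The main obstacle I anticipate is not the interior integrals — those are routine lattice identities — but the clean transfer of the \emph{boundary} data: one must argue rigorously that $Y$, equipped with the order and $\ell^\infty$-norm inherited from $\ell^2(\mE)\times\ell^2(\mE)$, is a sublattice on which $\Sigma(t)$ generates $e^{-r\Sigma(t)}$, that the order-theoretic truncations of traces coincide with traces of truncations, and that positivity/contractivity of $P_Y$ is exactly the bridge making all of this consistent. Getting these lattice-theoretic compatibilities precisely right — especially for infinite $\mE$, where $\mathbf 1\notin\ell^2(\mE)$ and the stochastic statement must be read with care — is the delicate part.
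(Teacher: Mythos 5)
Your proposal is correct and takes essentially the same route as the paper, which obtains the corollary by splitting the network form into interior and boundary parts and feeding the resulting conditions into Propositions~\ref{proposition Linftycontractivity}, \ref{prop:L1contractivity} and \ref{them: stochastic EVF} (equivalently, via Proposition~\ref{cor:easytho}, by invoking the autonomous verifications in \cite[Thm.~6.85]{Mug14} and \cite[Prop.~5.1]{CarMug09}, with the returned adjoint form handling part (3) exactly as you describe). The lattice-compatibility facts you flag as the delicate point --- that positivity (resp.\ $\ell^\infty$-contractivity) of $P_Y$ forces $Y$ to be invariant under the trace of the corresponding truncation, so that $H^1_Y(\mathcal G)$ is stable under $(\cdot)^+$ and $(1\wedge|\cdot|)\sgn(\cdot)$ --- are precisely the standard lemmas the paper borrows from these references (cf.\ also \cite[Lemma~6.1]{Mug07}), so they constitute cited background rather than a gap in your argument.
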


\begin{ex}\label{ex:kuchm}
The continuity/Kirchhoff vertex conditions are special cases of the general conditions in~\eqref{eq:Ycond}. Indeed, denote by $c_{\mV}$ the vector in $\ell^2(\mE)\times \ell^2(\mE)$ that consists of vertex-wise constants, i.e., entries of $c_{\mV}$ agree whenever they correspond to endpoints of edges the same vertex $\mv\in \mV$ is incident with. Let by $Y$ the subspace of $\ell^2(\mE)\times \ell^2(\mE)$ 
spanned by $c_{\mV}$ and take $\Sigma=0$: then~\eqref{eq:Ycond} agrees with continuity--Kirchhoff conditions: we denote by
\[
H^1(\mathcal G)
\] 
the Sobolev space $H^1_Y(\mathcal G)$ with respect to this distinguished space $Y$. Under stronger assumptions on $p_\me,c_\me$, a well-posedness result comparable to Proposition~\ref{prop:netw-well} has been obtained in~\cite[Thm.~3.3]{AreDieKra14}.
Because $H^1(\mathcal G)\hookrightarrow C(\mathcal G)$, we furthermore deduce that $U(t,s)$ maps $C(\mathcal G)\cap L^2(\mathcal G)$ into $C(\mathcal G)$ for all $s\in [0,T]$ and a.e.\ $t\in (s,T]$.

Due to the standing assumption that $\mG$ is uniformly locally finite, $P_Y$ is a block operator matrix whose blocks are of the form
$\frac{1}{n}J_n$ ($J_n$ denoting the $n\times n$ all-1-matrix, $n$ the degree of the corresponding vertex). Because $P_Y$ leaves invariant the order interval $[-1,1]_{\ell^2\times \ell^2}$ and $\Sigma\equiv 0$, we deduce that $(e^{-rA(t)})_{r\ge 0}$ is positive and -- if $p_\me \ge 0$ -- sub-Markovian for a.e.\ $t\in [0,T]$; hence by Proposition~\ref{cor:easytho} so is the evolution family $\U$.	

 As concerns the long-time behavior if $T=\infty$, we can hence discuss two cases:

\begin{itemize}
\item If $\liminf\limits_{t\to\infty}\frac{1}{t-t_0}\int_{t_0}^{t} \essinf p_\me(r,\cdot)\ dr>0$ for all $\me\in\mE$, then $\U$ is by Proposition~\ref{prop:ues} uniformly exponentially stable.
\item Let $p_\me\equiv 0$ for all $\me\in\mE$. The network is always assumed to be connected; if it is additionally finite (i.e., $|\mE|<\infty$), then  by Proposition~\ref{cor:stability} and \cite[Prop.~6.70]{Mug14} $0$ is a simple eigenvalue of each $A(t)$; $A(t)$ is self-adjoint and the its null space consists of all constant functions. Furthermore, not only has each $A(t)$ a spectral gap, but there is a uniform lower bound on them: By Nicaise' inequality~\cite[Théo.~3.1]{Nic87}
$\tilde{s}(t)\ge \frac{\gamma^2\pi^2}{|\mE|^2}>0$ for a.e.\ $t\in [0,\infty[$, and we conclude by Proposition~\ref{cor:stability} that
\[
\left\|U(t,s)f-\frac{1}{\sqrt{|\mE|}}\int_{\mathcal G}f \d x\cdot {\bf 1}\right\|\le e^{-\frac{\gamma^2\pi^2}{|\mE|^2}(t-s)}\|f\| \qquad\hbox{for all }f\in H\hbox{ and }0\le s\le t<\infty.
\]
\end{itemize}
Similar convergence results have been obtained in the strong topology for general inhomogeneous diffusion equations in~\cite[\S~5.1]{AreDieKra14}.
\end{ex}

\begin{proposition}\label{prop:netwultra}
Let $p_e(t,x)\ge 0$ for all $\me\in\mE$ and a.e.\ $t\in [0,T]$, $x\in (0,1)$. Let furthermore $P_Y=(\pi^Y_{ij})$ and $\Sigma(t)=(\sigma(t)_{ij})$ satisfy 
\begin{itemize}
\item $\sum_j |\pi_{ij}|\le 1$ for all $i$;
\item $\Re \sigma(t)_{ii}\ge \sum_{j\ne i}|\sigma(t)_{ij}|$ for all $i$ and a.e.\ $t\in [0,T]$;
\item $\Re \sigma(t)_{ii}\ge \sum_{j\ne i}|\sigma(t)_{ji}|$ for all $i$ and a.e.\ $t\in [0,T]$.
\end{itemize}
Then $\U$ is completely contractive. If additionally $Y$ is a generalized ideal of $\langle c_\mV\rangle$, then $\U$ is ultracontractive.
\end{proposition}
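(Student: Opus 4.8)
The plan is to treat the two claims separately: complete contractivity will follow from the lattice criteria of Section~\ref{sec:lattice}, while ultracontractivity will be obtained by dominating $\U$, after replacing the complex vertex coupling $\Sigma(t)$ by a positive one, by an evolution family of continuity/Kirchhoff type whose ultracontractivity is elementary.

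First I would prove complete contractivity by checking the hypotheses of Corollary~\ref{cor:netw-subm}(3). The row-sum bound $\sum_j|\pi_{ij}|\le1$ says precisely that the projector $P_Y$ is $\ell^\infty$-contractive. The first diagonal-dominance condition $\Re\sigma(t)_{ii}\ge\sum_{j\ne i}|\sigma(t)_{ij}|$ is the classical dissipativity condition guaranteeing that $-\Sigma(t)$ generates an $\ell^\infty$-contraction semigroup, so that $e^{-r\Sigma(t)}$ is $\ell^\infty$-contractive for every $r\ge0$; applied to the transpose, the column condition $\Re\sigma(t)_{ii}\ge\sum_{j\ne i}|\sigma(t)_{ji}|$ gives the same for $e^{-r\Sigma(t)^*}$. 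Together with the standing sign assumption $p_\me\ge0$ these are exactly the hypotheses of Corollary~\ref{cor:netw-subm}(3), which then yields complete contractivity of $\U$ and its extrapolation to every $L^p(0,1;\ell^2(\mE))$.

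For the second assertion I would introduce, at each vertex, the real matrix $B(t)$ with diagonal $\Re\sigma(t)_{ii}$ and off-diagonal entries $-|\sigma(t)_{ij}|$. The two diagonal-dominance conditions make $-B(t)$ a Metzler generator of a positive $\ell^\infty$- and $\ell^1$-contraction semigroup, and a pointwise computation using $\xi_k\overline{\eta_k}\ge0$ gives the lattice domination $|e^{-r\Sigma(t)}\xi|\le e^{-rB(t)}|\xi|$. I would then compare $\fra$ with the real reference form $\frb$ carrying the coupling $B(t)$ on the continuity space $\langle c_\mV\rangle$, with form domain $W:=H^1(\mathcal G)$; the hypothesis that $Y$ is a generalized ideal of $\langle c_\mV\rangle$ is exactly what makes $V=H^1_Y(\mathcal G)$ a generalized ideal of $W$, and the interior conditions of Proposition~\ref{prop:domin} hold because $c_\me,p_\me$ are real and $c_\me$ is uniformly elliptic by~\eqref{Eq: uniformly ellipticityGraph}. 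Writing $\V$ for the evolution family associated with $\frb$, the aim is the domination $|U(t,s)f|\le V(t,s)|f|$.

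It then remains to show $\V$ is ultracontractive and to transfer the bound. The decisive point is that $H^1(\mathcal G)=H^1(0,1;\ell^2(\mE))$ embeds continuously into $L^\infty((0,1)\times\mE)$ with a constant independent of the (possibly infinite) edge set, by the one-dimensional $\ell^2(\mE)$-valued Sobolev embedding $\|u(x)\|_{\ell^2(\mE)}\le C\|u\|_{H^1(0,1;\ell^2(\mE))}$; together with $\|u\|_{L^2}^2\le\|u\|_{L^\infty}\|u\|_{L^1}$ this is a Nash inequality~\eqref{Nash inequality} for $W$ with $\mu=2$. As $\V$ is completely contractive (again by Corollary~\ref{cor:netw-subm}(3), now with the positive coupling $B(t)$), Theorem~\ref{Theorem: Ultracontractivity via Nash ineq} makes $\V$ ultracontractive, and the $L^1$--$L^\infty$ bound passes to $\U$ through the domination inequality. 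The main obstacle I anticipate is the domination step itself: because a genuinely complex $\Sigma(t)$ need not satisfy the positivity-type hypothesis $\Re a(t;(\Re u)^+,(\Re u)^-)\le0$ of Proposition~\ref{prop:domin}, the comparison is best carried out on the piecewise-constant approximants, applying the semigroup domination criterion~\cite[Thm.~4.1]{ManVogVoi05} to the averaged vertex matrices $\Sigma_k$ (lattice-dominated by the corresponding $B_k$) and then passing to the limit by Theorem~\ref{convergence forte de solution approchee}; the delicate bookkeeping is to ensure these domination estimates survive the averaging and the limit uniformly in $t$.
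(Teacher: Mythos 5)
Your proposal is correct, and on the ultracontractivity half it takes a genuinely different --- more constructive --- route than the paper. The complete-contractivity step is the same in substance: both verify Corollary~\ref{cor:netw-subm}(3), the paper outsourcing the matrix computations (row/column diagonal dominance implies $\ell^\infty$-contractivity of $e^{-r\Sigma(t)}$ and of $e^{-r\Sigma(t)^*}$) to \cite[Lemma~6.1]{Mug07}, which you instead carry out directly. For ultracontractivity the paper dominates $\U$ by the evolution family of the \emph{same} form on the continuity space $H^1(\mathcal G)$, citing \cite[Thm.~6.2]{CarMug09} (with \cite[Thm.~C.II-5.5]{Nag86}: the dominating object is the modulus semigroup) for the semigroup-level domination, and quotes \cite[Chapt.~3]{Pro14} for the Nash inequality on $H^1(\mathcal G)$; you instead build an explicit real dominating coupling $B(t)$ (diagonal $\Re\sigma(t)_{ii}$, off-diagonal $-|\sigma(t)_{ij}|$) and prove the Nash inequality by hand from the $\ell^2(\mE)$-valued Sobolev embedding, which is legitimate here because every edge has length $1$; your exponent $\mu=2$ is suboptimal (the sharp one-dimensional value is $\mu=1$), but Definition~\ref{def:ultracont} only requires \emph{some} exponent, so this is harmless. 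Your caution about Proposition~\ref{prop:domin} is well placed: its hypotheses $\Re a(t;(\Re u)^+,(\Re u)^-)\le 0$ etc.\ force positivity of the dominated family, which can fail for genuinely complex $\Sigma(t)$ (the paper itself stresses this after Corollary~\ref{cor:general}), and your fallback --- applying \cite[Thm.~4.1]{ManVogVoi05} to the averaged pairs $(a_k,b_k)$, noting that the inequality $\Re a(t;u,v)\ge b(t;|u|,|v|)$ for $u\overline{v}\ge 0$ survives averaging by linearity, that domination is stable under composition when the dominating factors are positive, and that the strong limit of Theorem~\ref{convergence forte de solution approchee} preserves it --- is precisely the mechanism behind the paper's proof of Proposition~\ref{prop:domin}, and it does work without those positivity hypotheses, since \cite{ManVogVoi05} does not require the dominated semigroup to be positive. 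Two small points you should add for infinite $\mE$: boundedness of $B(t)$ on $\ell^2(\mE)\times\ell^2(\mE)$ is not automatic from boundedness of $\Sigma(t)$ (the entrywise modulus of a bounded operator can be unbounded), but follows from the Schur test because your two diagonal-dominance conditions bound all row and column sums of $\bigl(|\sigma(t)_{ij}|\bigr)$ by $2\sup_i|\sigma(t)_{ii}|\le 2S$; and accretivity of the vertex part of $\frb$ --- needed both for $\frb\in\Formm([0,T];H^1(\mathcal G),H)$ and for complete contractivity of the dominating family $\V$ --- follows from the same two conditions by symmetrizing $\sum_{i\ne j}|\sigma(t)_{ij}|\,|\xi_i|\,|\xi_j|$. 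In sum, your route buys self-containedness (no recourse to \cite{Pro14} or \cite{CarMug09}) at the cost of length; the paper's buys brevity plus the sharper structural fact that the dominating family is the modulus evolution family.
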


\begin{proof}
It follows from~\cite[Lemma~6.1]{Mug07} and Corollary~\ref{cor:netw-subm} that $\U$ is completely contractive. 

Under the assumption that $Y$ is a generalized ideal of $\langle c_\mV\rangle$ it has been shown in~\cite[Chapt.~3]{Pro14} that $H^1(\mathcal G)$ satisfies a Nash inequality whenever $\mathcal G$ is a connected, locally finite metric graph with edge lengths uniformly bounded away from 0: accordingly, the non-autonomous form with domain $H^1(\mathcal G)$ is associated with an ultracontractive $\U$, owing to Theorem~\ref{Theorem: Ultracontractivity via Nash ineq}.

By~\cite[Thm.~6.2]{CarMug09} the semigroup associated with $a(t)\equiv a$ with domain $H^1_Y(\mathcal G)$ is dominated by the  semigroup associated with the same form with domain $H^1(\mathcal G)$, provided $Y$ is a generalized ideal of $\langle c_\mV\rangle$ (in fact by \cite[Thm.~C.II-5.5]{Nag86} the latter is the modulus semigroup of the former one). By Proposition~\ref{cor:easytho}, the same holds for the associated evolution families, hence the former heat kernel inherits ultracontractivity from the latter one.
\end{proof}

The following result seems to be new even in the autonomous case: in~\cite{Mug07} Gaussian bounds for heat kernels on \textit{finite} networks have been proved only in the special case of $Y=\langle c_\mV\rangle$, see also~\cite[Chapt.~7]{Mug14} for an abstract approach based on the theory of Dirichlet forms.

\begin{corollary}\label{cor:general}
Under the assumptions of Proposition~\ref{prop:netwultra}, let $Y$ be $\langle c_\mV\rangle$-invariant (i.e., the entrywise product $\psi c_\mV$ lies in $Y$ for all $\psi\in Y$), where the vector $c_\mV$ is defined as in Example~\ref{ex:kuchm}. If furthermore $\Sigma(t)$ is diagonal for a.e.\ $t\in [0,T]$, then $\U$ satisfies Gaussian bounds.
\end{corollary}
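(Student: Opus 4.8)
The plan is to apply Theorem~\ref{Main 1 TheoremGaussianEstimate} directly to $\fra$ with form domain $V=H^1_Y(\mathcal G)$, working throughout with the intrinsic (geodesic) metric of the network $\mathcal G$ and the corresponding Davies space $W$ of functions that are continuous on $\mathcal G$, smooth on each edge, and satisfy $\|\psi'\|_\infty,\|\psi''\|_\infty\le 1$. Under the standing assumptions of Proposition~\ref{prop:netwultra} we already know that $\U$ is completely contractive and that the network carries a Nash inequality (which, by the edgewise inequality together with H\"older's inequality, passes to any closed subspace of $\bigoplus_\me H^1(0,1)$ carrying the full $H^1$-norm, in particular to $H^1_Y(\mathcal G)$). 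The remaining work is to produce, for every $\rho\in\R$, the $W$-invariance, the uniform $H$-ellipticity, and the complete contractivity of the perturbed families required by Theorem~\ref{Main 1 TheoremGaussianEstimate}, with a weight $\omega_\rho$ growing at most like $1+\rho^2$.

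First I would verify that $V=H^1_Y(\mathcal G)$ is $W$-invariant. For $u\in H^1_Y(\mathcal G)$ the edgewise products $e^{-\rho\psi_\me}u_\me$ again lie in $H^1(0,1)$, so only the vertex condition needs attention: since $\psi$ is continuous on $\mathcal G$, the boundary vector $\underline{M_\rho u}$ is $\underline u$ multiplied entrywise by the vertex-constant vector $(e^{-\rho\psi(\mv)})_{\mv}$, which lies in $\langle c_\mV\rangle$; the hypothesis that $Y$ is $\langle c_\mV\rangle$-invariant then gives $\underline{M_\rho u}\in Y$, i.e.\ $M_\rho u\in H^1_Y(\mathcal G)$. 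Next I would compute the perturbed form $a^\rho$ from~\eqref{eq:brf}: the exponentials cancel in every integrand, the principal part stays $\int c_\me u'\overline{v'}\,\d x$ up to lower-order terms, and — this is where diagonality of $\Sigma$ is used — the vertex term is \emph{unchanged}, because $D_\rho^{-1}\Sigma(t)D_\rho=\Sigma(t)$ for the diagonal multiplier $D_\rho=\diag(e^{-\rho\psi(\mv)})$. Since the first-order term is purely imaginary one finds
\[
\Re a^\rho(t;u,u)=\Re a(t;u,u)-\rho^2\int c_\me(\psi')^2|u|^2\,\d x,
\]
so $a^\rho\in\Formm([0,T];V,H)$ with the \emph{same} $\alpha$ and an $\omega_\rho$ of order $1+\rho^2$; hence~\eqref{AdditionalAssumption} holds.

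It then remains to check that the rescaled perturbed semigroups $(e^{-\omega_\rho r}T^\rho_t(r))_{r\ge0}$ are completely contractive, which I would obtain from the invariance criteria of Proposition~\ref{proposition Linftycontractivity} and Proposition~\ref{prop:L1contractivity} applied to $a^\rho(t)+\omega_\rho(\cdot\mid\cdot)$. Testing against $w_1=(1\wedge|v|)\sgn v$ and $w_2=(|v|-1)^+\sgn v$, complete contractivity of $\U$ (Proposition~\ref{prop:netwultra}) gives $\Re a(t;w_1,w_2)\ge0$, while the genuinely new object is the first-order contribution $\rho\int c_\me\psi'(w_1'\overline{w_2}-w_1\overline{w_2'})\,\d x$. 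Using that $(\sgn v)'\overline{\sgn v}$ is purely imaginary one finds $\Re\int c_\me\psi'\,w_1'\overline{w_2}=0$, and an integration by parts turns the surviving part into $\int(c_\me\psi')'(|v|-1)^+\,\d x$ plus vertex boundary terms; the former is bounded by $C\rho\,(w_1\mid w_2)$ and is absorbed into $\omega_\rho$ (now of the admissible form $\omega+C_1\rho+C_2\rho^2\le\omega'(1+\rho^2)$), while the vertex boundary terms are handled precisely because $\Sigma(t)$ is diagonal and $Y$ is $\langle c_\mV\rangle$-invariant. The $L^1$-side is identical after passing to the returned adjoint form $\cev{\fra^*}$, as in Proposition~\ref{prop:L1contractivity}. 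With the Nash inequality for $H^1_Y(\mathcal G)$ in hand, Theorem~\ref{Main 1 TheoremGaussianEstimate} then delivers the asserted Gaussian bounds.

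The main obstacle is exactly the first-order drift produced by the conjugation $M_\rho^{-1}A(t)M_\rho$: it spoils complete contractivity of the perturbed semigroups at face value, and the estimate must survive with a weight $\omega_\rho$ that is at most quadratic in $\rho$. The resolution rests on the two algebraic cancellations above — the vanishing of the real part of the drift against the test pair $(w_1,w_2)$, and the invariance of the vertex term under the diagonal multiplier $D_\rho$ — together with the integration-by-parts bound that keeps the drift's contribution to $\omega_\rho$ linear in $\rho$. Ensuring that the vertex boundary terms remain nonnegative (rather than merely bounded) is the delicate point, and it is here that the hypotheses that $\Sigma(t)$ be diagonal and that $Y$ be $\langle c_\mV\rangle$-invariant are indispensable.
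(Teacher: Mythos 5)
Your proposal follows exactly the skeleton of the paper's own proof: Davies' trick adapted to the network metric via the space $W_{\mathcal G}$ of edgewise smooth, continuous functions with $\|\psi'\|_\infty,\|\psi''\|_\infty\le 1$; the observation that $H^1_Y(\mathcal G)$ is $W_{\mathcal G}$-invariant precisely because $\underline{M_\rho u}$ is the entrywise product of $\underline u$ with a vertex-constant vector, i.e.\ iff $Y$ is $\langle c_\mV\rangle$-invariant; and the identity $a^\rho(t;u,v)=a_0^\rho(t;u,v)+(\Sigma(t)\underline u\mid\underline v)_Y$ for diagonal $\Sigma$, the diagonal multiplier cancelling in the boundary term. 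The difference is that the paper \emph{delegates} the analytic core --- namely that the perturbed boundary-free forms $\fra_0^\rho$ are associated with completely contractive evolution families, with unchanged form domain still satisfying a Nash inequality --- to the citation \cite[Thm.~4.7]{Mug07}, while you try to verify it by hand. Several of your side computations are sound: $\Re a^\rho(t;u,u)=\Re a(t;u,u)-\rho^2\int c_\me(\psi')^2|u|^2\,\d x$ (so $\omega_\rho\le\omega(1+\rho^2)$, giving \eqref{AdditionalAssumption}), the pointwise vanishing of $\Re\bigl(w_1'\overline{w_2}\bigr)$ via the purely imaginary $(\sgn v)'\overline{\sgn v}$, and the remark that a Nash inequality passes to closed subspaces of $\bigoplus_\me H^1(0,1)$ by edgewise Nash, H\"older, and $\ell^1\hookrightarrow\ell^2$ (correct for unit edge lengths, and in fact simpler than the domination detour of Proposition~\ref{prop:netwultra}).

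The gap sits exactly in the step the paper outsources. First, your integration by parts turning $-\rho\int c_\me\psi'\bigl((|v|-1)^+\bigr)'\,\d x$ into $\rho\int (c_\me\psi')'(|v|-1)^+\,\d x$ plus vertex terms requires $c_\me(t,\cdot)\in W^{1,\infty}(0,1)$, whereas the standing assumptions of Section~\ref{sec:networks} only give $c_\me(t)\in L^\infty$ in $x$. This is not a removable technicality: the principal part tested at $(w_1,w_2)$ contributes only $\int c_\me(|v|-1)^+|(\sgn v)'|^2\,\d x$, which does not control $\bigl((|v|-1)^+\bigr)'$, and for a coefficient with a jump at a point where $\psi'\ne 0$ a narrow spike $v$ supported in the interior of one edge (so $\underline v=0\in Y$) makes $\rho\int c_\me\psi'\bigl((|v|-1)^+\bigr)'\,\d x$ bounded away from $0$ while $(w_1\mid w_2)=\int(|v|-1)^+\to 0$; since the criterion behind Proposition~\ref{proposition Linftycontractivity} (via \cite[Thm.~4.1]{Nit12}) is necessary as well as sufficient, \emph{no} choice of $\omega_\rho$ then yields $L^\infty$-quasi-contractivity of the perturbed semigroups --- which is precisely why the paper handles rough coefficients on domains through Theorem~\ref{Main 2 TheoremGaussianEstimate} (linear $L^p$-quasi-contractivity plus Gagliardo--Nirenberg, cf.\ Lemmas~\ref{quasiLpkontr Neumann} and~\ref{quasiLpkontr general case}) rather than through Theorem~\ref{Main 1 TheoremGaussianEstimate}. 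Second, even granting smooth coefficients, the vertex terms $\mp\rho\,c_\me\psi_\me'\,(|v_\me|-1)^+$ created by the integration by parts carry no sign: $\psi'$ is arbitrary in $[-1,1]$ at the endpoints, the edgewise contributions do not cancel (functions $v\in V$ satisfy no Kirchhoff-type condition), and diagonality of $\Sigma$ together with $\langle c_\mV\rangle$-invariance of $Y$ enter only the invariance of the boundary form and of the form domain --- they say nothing about these terms. You flag this yourself as ``the delicate point,'' but you assert rather than prove it; to close the argument you must either impose regularity on $c_\me$ and genuinely estimate these boundary terms, or invoke \cite[Thm.~4.7]{Mug07} as the paper does, or recast the whole step via the linearly quasi-contractive route of Theorem~\ref{Main 2 TheoremGaussianEstimate}.
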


We stress that our assumption on $P_Y$ and $\Sigma$ are only enforcing complete contractivity, but the evolution family need not be positive. An example is given by the non-autonomous parabolic equation on a loop with boundary conditions defined by $\Sigma\equiv 0$ and $Y=\langle{1\choose -1}\rangle$, which is $\langle c_\mV\rangle$-invariant (here $c_\mV={1\choose 1}$): this equation is governed by a completely contractive and (in view of the Nash inequality for $H^1(\mathcal G)$) ultracontractive evolution family $\U$, which therefore enjoys Gaussian bounds. However, $\U$ is not positive, since neither is $P_Y$.

\begin{proof}[Proof of Corollary~\ref{cor:general}]
We apply Davies' Trick in a slightly different version. Indeed, we adapt the usual setting to our network environment by introducing the space
\[
W_{\mathcal G}:=\left\{ 
\psi \in H^1(\mathcal G)\cap C^\infty(0,1;\ell^2(\mE))\ |\ \|\psi'\|_\infty\le 1,\ \|\psi''\|_\infty\le 1\right\}\ :
\]
then one can check that 
\[
d(x,y):=\sup\{|\psi(x)-\psi(y)| \ | \ \psi\in W_{\mathcal G}\},\qquad x,y\in\mathcal G,
\]
defines a metric on $\mathcal G$ that is equivalent to the canonical one~\cite[\S~3.2]{Mug14}. (Recall that $H^1(\mathcal G)$ denotes the space $H^1_{\tilde Y}(\mathcal G)$, where ${\tilde Y}=\langle c_\mV\rangle$ is the space spanned by $c_\mV$: each function in $H^1(\mathcal G)$ is by definition continuous on the metric space $\mathcal G$ and $\langle c_\mV\rangle$ is an algebra with respect to the entry-wise product.) 
By definition, $H^1_Y(\mathcal G)$ is $W_{\mathcal G}$-invariant if and only if $\underline{u}\in Y$ implies $\underline{e^{\rho\psi}u}\in Y$, i.e., if and only if $Y$ is $\langle c_\mV\rangle$-invariant.

If $\Sigma(t)\equiv 0$, then the assertion has been proved in~\cite[Thm.~4.7]{Mug07} by showing that the relevant form (let us denote it by $\fra_0$ to stress the absence of boundary terms) induces perturbed forms $\fra_0^\rho$ that are associated with completely contractive perturbed evolution families (with the form domain being unchanged and still satisfying a Nash inequality). In the general case of $a(t;u,v)=a_0(t;u,v)+(\Sigma(t)\underline{u}|\underline{v})_Y$, we find that $a^\rho(t;u,v)=a_0^\rho(t;u,v)+(\Sigma(t)\underline{u}|\underline{v})_Y$. These forms are associated with completely contractive evolution families, hence the claim follows.
\end{proof}

\subsection{Second-order elliptic operators with complex coefficients on open domains}\label{Example Ellipt Op }
Let $\Omega\subset \R^d$ be an open set. On the complex Hilbert space $L^2(\Omega)$ we consider the non-autonomous form $\fra_V:[0,T]\times V\times V$ defined by
\begin{align}\label{EllipOper-associated form}
a_V(t;u,v):=\sum_{k,j=1}^d\int_{\Omega} a_{kj}(t;x)D_ku\overline{D_j v} \ \d x&+\sum_{k=1}^d\int_{\Omega} \Big[b_k(t;x) D_ku\overline{v} +c_k(t;x)u\overline{D_k v}\Big]\d x \\&\nonumber+ \int_{\Omega} a_0(t;x)u \overline{v}\d x
\end{align}
for all $u,v\in V$ where $V$ is a closed subspace of $H^1(\Omega)$ that contains $H_0^1(\Omega)$. We assume that the coefficients $a_{k,j}, b_k, c_k, a_0$ lie in $L^\infty([0,T]\times\Omega;\mathbb C)$. Moreover, we assume that the principal part is uniformly elliptic, i.e., there exist a constant $\nu>0$ such that
\begin{equation}\label{UniformEllipticityEq}
\Re \sum_{k,j=1}^d a_{kj}(t;x)\xi_k\overline{\xi_j}\geq \nu |\xi|^2\quad \hbox{for a.e. }t\in [0,T], \ x\in \Omega \hbox{ and all }\xi\in \C^n.
\end{equation}
Then $\fra_V$ defined in~\eqref{EllipOper-associated form} belongs to $\Formm([0,T];V,L^2(\Omega)).$ In fact, we have 
\[
\begin{split}
|a_V(t;u,v)|&\leq M\|u\|_V\|u\|_V\\
\Re \fra_V(t;u,u)+\omega\|u\|_{L^2(\Omega)}^2 &\geq \frac{\nu}{2} \|u\|_{V}^2,
\end{split}
 \qquad \hbox{for all }u,v\in V\hbox{ and a.e. } t\in[0,T],
 \]
where $M>0$ is a constant depending only on $\|a_{kj}\|_\infty, \|b_{k}\|_\infty, \|c_k\|_\infty$, and $\|a_0\|_\infty$, and one can choose $\omega=\sum_{k=1}^d \frac{1}{2}\big(\||\Re(b_k+c_k)|+|\Im(b_k-c_k)|\|_\infty^2\big)+\|(\Re a_0)^{-}\|_\infty$ \cite[Section 4.1]{Ouh05}. Here $(\Re a_0)^{-}=\max\{0,-\Re a_0\}.$ We can then associate a family of operators $\mathcal A_V(t)\in \L(V,V'), t\in[0,T],$ with the form $\fra_V$ which are formally given by
\begin{equation}\label{EllipOperator}\mathcal A_V(t)=-\sum_{k,j=1}^dD_j a_{kj}D_ku+\sum_{k=1}^d b_k D_k u-\sum_{k=1}^d D_k(c_ku)+c_0u.\end{equation} 
Let $A_V(t)$ be the operator associated with $a(t;\cdot,\cdot)$ on $L^2(\Omega).$ Thus $A_V(t)$ is the realization of $\mathcal A_V(t)$ in $L^2(\Omega)$ with various boundary condition which are determined by the form domain $V.$ For example $A_V(t)$ is the realization of $\mathcal A_V(t)$ with 
\begin{enumerate}[(a)]
	\item Dirichlet boundary condition if $V=H_0^1(\Omega).$ 
	\item Neumann boundary condition if $V=H^1(\Omega).$
	\item Mixed boundary condition if 
	\[V=\overline{\left\{\restr{u}{\Omega}:\ u\in C_{c}^\infty(\R^d\setminus\Gamma)\right\} }^{H^1(\Omega)}\]
	where $\Gamma $ is a closed subset of the boundary of $\Omega.$
\end{enumerate}

In particular, $\fra_V$ is associated with an evolution family $\mathcal U_V$ that governs the non-autonomous problem driven by the operator family $(\A_V(t))_{t\in [0,T]}$. Each of these evolution families is positive, it dominates the evolution family $\mathcal U_{H^1_0}$ and is dominated by $\mathcal U_{H^1}$. Following~\cite{Ouh04} we introduce the following notations:
\[
\begin{split}
f_k(t,x)&:=\sum_{j=1}^d D_j(\Im a_{kj}(t,x)),\qquad 
m(t,x):=\frac{1}{4\nu} \sum_{k=1}^d \Big[f_k(t,x)+\Im(c_k(t,x)-b_k(t,x))\Big],\\
\mathfrak{R}_V(t;u,v)&:=\int_{\Omega}\sum_{k,j=1}^d \Re (a_{kj})D_ku\overline{D_j v}\d x +\sum_{k,j=1}^d\int_{\Omega} \Big[\Re(b_k)D_ku\overline{v}\d x+\Re(c_k)u\overline{D_k v}\Big] + \int_{\Omega} \Re(a_0)u \overline{v}\d x.\end{split}
\]

\begin{lemma}\label{quasiLpkontr Neumann}
	Let $\fra_V$ be given by~\eqref{EllipOper-associated form} and denote by $\U_V$ the associated evolution family on $L^2(\Omega).$ Assume that $(|v|\wedge 1)\sgn v\in V$ for all $v\in V$. Moreover, we assume that $a_{kj}(\cdot,\cdot)$ are real-valued functions for all $k,j=1,2,\ldots d.$ Then the evolution family $\U_V$ is $L^p$-quasi-contractive for all $p\in(1,\infty[$ and we have 
	\begin{equation}\label{EllipticOpUntracontractivityEq}
	\|\U_V(t,s)f\|_{L^p(\Omega)}\leq e^{(t-s)\tilde\omega_p}\|f\|_{L^p(\Omega)} \quad \hbox{for all }(t,s)\in \Delta,
	\end{equation}
	where 
	\begin{equation} \label{omega_p Formula}\tilde\omega_p
	:=\begin{cases}
\|(\Re a_0)^{-}\|_\infty+\frac{1}{\nu}\big(\frac{1}{p}+\frac{1}{2}\big)\sum_{k=1}^d\|b_k-c_k\|_\infty^2+\frac{p}{\nu}\sum_{k=1}^d\|\Re c_k\|_\infty^2 &\hbox{ if } p\in[2,\infty[,\\
\|(\Re a_0)^{-}\|_\infty+\frac{1}{\nu}\big(\frac{1}{2}+\frac{p-1}{p}\big)\sum_{k=1}^d\|b_k-c_k\|_\infty^2+\frac{p}{\nu(p-1)}\sum_{k=1}^d\|\Re b_k\|_\infty^2 &\hbox{ if } p\in]1,2].
\end{cases}
\end{equation}
	
\end{lemma}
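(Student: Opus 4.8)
The plan is to reduce the non-autonomous statement to a bound on the frozen semigroups and then invoke Theorem~\ref{theorem quasilpcontractivity}. Since the constant $\tilde\omega_p$ in~\eqref{omega_p Formula} involves only the $L^\infty$-norms of the coefficients, it does not depend on $t$; hence it is enough to verify, for a.e.\ $t\in[0,T]$, the autonomous estimate~\eqref{Eq1 them: theorem quasilpcontractivity} for $(e^{-r\A_V(t)})_{r\ge0}$ with the (constant) rate $\hat\omega_p(t)\equiv\tilde\omega_p\in L^\infty(0,T)$. Once this is done, Theorem~\ref{theorem quasilpcontractivity} immediately produces the extrapolation of $\U_V$ to $L^p(\Omega)$ together with $\|U_V(t,s)f\|_{L^p}\le e^{\int_s^t\hat\omega_p(r)\,\d r}\|f\|_{L^p}=e^{(t-s)\tilde\omega_p}\|f\|_{L^p}$, which is exactly~\eqref{EllipticOpUntracontractivityEq}.

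For the frozen semigroups I would use the characterization already exploited in the proof of Theorem~\ref{theorem quasilpcontractivity}, namely~\cite[Thm.~4.1]{Nit12}: the bound~\eqref{Eq1 them: theorem quasilpcontractivity} with rate $\tilde\omega_p$ is equivalent to $P_{B^p}V\subset V$ together with the pointwise inequality $\Re a_V(t;u,|u|^{p-2}u)+\tilde\omega_p\|u\|_{L^p(\Omega)}^p\ge0$ for all $u\in V$ with $|u|^{p-2}u\in V$. The condition $P_{B^p}V\subset V$ holds because $P_{B^p}$ acts as a normal contraction, preserving the argument of $u$ and contracting its modulus, and the standing hypothesis $(|v|\wedge1)\sgn v\in V$ expresses precisely the stability of $V$ under such contractions; this is the case for the Dirichlet, Neumann and mixed form domains of (a)--(c). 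Everything thus reduces to the pointwise form inequality.

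To establish that inequality for $p\ge2$ I would differentiate via the chain rule $D_k(|u|^{p-2}u)=|u|^{p-2}D_ku+(p-2)|u|^{p-4}\Re(\bar uD_ku)\,u$ and split $a_V(t;\cdot,\cdot)$ into its principal, first-order and zeroth-order parts, following the computations of~\cite[\S~4.1]{Ouh05} and~\cite{Ouh04}. Because the $a_{kj}$ are real, the real part of the principal contribution equals $\int_\Omega|u|^{p-2}\Re\sum_{k,j}a_{kj}D_ku\overline{D_ju}\,\d x+(p-2)\int_\Omega|u|^{p-2}\sum_{k,j}a_{kj}\partial_k|u|\,\partial_j|u|\,\d x$, and both summands are nonnegative by the ellipticity~\eqref{UniformEllipticityEq}; in particular they dominate $\nu\int_\Omega|u|^{p-2}|\nabla u|^2\,\d x$. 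The first-order terms in $b_k,c_k$ produce couplings of $|u|^{(p-2)/2}\nabla u$ with $|u|^{p/2}$, which I would estimate by Cauchy--Schwarz and Young's inequality, absorbing the gradient into the coercive principal part at the cost of a multiple of $\|u\|_{L^p}^p$; the zeroth-order term contributes at most $\|(\Re a_0)^-\|_\infty\|u\|_{L^p}^p$. Choosing the split in Young's inequality optimally reproduces the coefficients $\tfrac1\nu(\tfrac1p+\tfrac12)\sum_k\|b_k-c_k\|_\infty^2$ and $\tfrac p\nu\sum_k\|\Re c_k\|_\infty^2$ of the $p\ge2$ branch of~\eqref{omega_p Formula}.

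Finally, for $1<p\le2$ I would not redo the computation but argue by duality: one has $\|e^{-r\A_V(t)}\|_{\L(L^p)}=\|e^{-r\A_V(t)^*}\|_{\L(L^{p'})}$ with $p'=p/(p-1)\ge2$, and the adjoint form $a_V^*(t;u,v)=\overline{a_V(t;v,u)}$ is again of the type~\eqref{EllipOper-associated form}, with real principal part satisfying the same ellipticity and with the drift coefficients interchanged, $b_k\mapsto\overline{c_k}$ and $c_k\mapsto\overline{b_k}$, while $a_0\mapsto\overline{a_0}$. Applying the already established $p\ge2$ estimate to $a_V^*$ at the exponent $p'$, and noting that $\|b_k-c_k\|_\infty$ and $\|(\Re a_0)^-\|_\infty$ are unchanged whereas $\tfrac{p'}\nu\|\Re\overline{b_k}\|_\infty^2=\tfrac p{\nu(p-1)}\|\Re b_k\|_\infty^2$, yields precisely the $p\le2$ branch of~\eqref{omega_p Formula}. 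I expect the main obstacle to be the core estimate of the third paragraph: the first-order coupling has to be split so as to match the stated constants exactly, and the sign of $p-2$ must be tracked carefully so that the principal-part lower bound, and thereby the duality reduction, remain valid throughout.
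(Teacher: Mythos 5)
Your proposal is correct and takes essentially the same route as the paper: the paper's proof consists precisely of the reduction via Theorem~\ref{theorem quasilpcontractivity} (with $\hat\omega_p\equiv\tilde\omega_p\in L^\infty(0,T)$, since the coefficient bounds are uniform in $t$) combined with a citation of the autonomous estimate \cite[Thm.~4.3]{Ouh04}, which supplies exactly the constants in~\eqref{omega_p Formula}. The only difference is that you re-derive that frozen-time estimate instead of citing it, and your derivation --- Nittka's criterion \cite[Thm.~4.1]{Nit12} (the same tool the paper uses inside the proof of Theorem~\ref{theorem quasilpcontractivity}), the real-principal-part computation with $|u|^{p-2}u$ for $p\ge 2$, and duality with $b_k\leftrightarrow\overline{c_k}$, $p\leftrightarrow p'$ for $1<p\le 2$ --- mirrors Ouhabaz's own proof, so nothing essential diverges.
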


\begin{proof}
	The assertion follows from Theorem \ref{theorem quasilpcontractivity} and \cite[Thm.~4.3]{Ouh04}.
\end{proof}
We can also discuss the case where $a_{k,j}$ are complex-valued functions.
\begin{lemma}\label{quasiLpkontr general case}
	Let $\fra_V$ be given by~\eqref{EllipOper-associated form} such that $(|v|\wedge 1)\sgn v\in V$ for all $v\in V$ and denote by $\U_V$ the associated evolution family on $L^2(\Omega).$ Assume that $f_k\in L^\infty([0,T]\times\Omega)$, $\Im\left(a_{k,j}(t,\cdot)+a_{j,k}(t,\cdot)\right)=0$ for all $k,j=1,2,\ldots, d$ and a.e.\ $t\in[0,T]$. If either of the conditions
	
	\begin{enumerate}[(i)]
		\item $V=H_0^1(\Omega)$; 
	
	 \item $V\ne H_0^1(\Omega), (\Re u)^{+}\in V \text{ for all } u\in V$ and there exists two constants $c_1,c_2>0$ such that
	
	\[\int_{\Omega} m(t;x)|u|^2\d x\geq c_1 \int_{\Omega} |u|^2\d x+c_2\Re \mathfrak{R}_V(t;u,u)\quad u\in V \text{ and } t\in[0,T];\]
	\end{enumerate}
are satisfied, then $\U_V$ is $L^p$-quasi-contractive and \eqref{EllipticOpUntracontractivityEq} holds (up to replacing $\Re a_0(t,\cdot)$ by $\Re a_0(t,\cdot)-m$ in the expression of $\tilde	\omega_p$).
\end{lemma}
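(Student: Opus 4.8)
The plan is to mirror the proof of Lemma~\ref{quasiLpkontr Neumann}: I would deduce the statement for the evolution family $\U_V$ from Theorem~\ref{theorem quasilpcontractivity} combined with the complex-coefficient refinement of Ouhabaz' $L^p$-contractivity criterion in \cite{Ouh04}. Recall that Theorem~\ref{theorem quasilpcontractivity} reduces the claim for $\U_V$ to a uniform-in-time bound for the \emph{autonomous} semigroups $(e^{-rA_V(t)})_{r\ge 0}$: it suffices to exhibit some $\hat\omega_p\in L^\infty(0,T)$ such that each frozen-coefficient semigroup satisfies \eqref{Eq1 them: theorem quasilpcontractivity} with exponent $\hat\omega_p(t)$, whereupon \eqref{EllipticOpUntracontractivityEq} follows with $\tilde\omega_p$ controlling $\esssup_t\hat\omega_p(t)$.

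For fixed $t$ the operator $A_V(t)$ is a single elliptic operator with complex, bounded measurable coefficients, so the $L^p$-quasi-contractivity of its semigroup is exactly the situation treated in \cite{Ouh04}. Via \cite[Thm.~4.1]{Nit12} this in turn reduces to the form inequality \eqref{Eq2 them: theorem quasilpcontractivity}, i.e.\ to bounding $\Re a_V(t;u,|u|^{p-2}u)$ from below for admissible $u$. The essential feature of the complex case is the treatment of the principal part: writing $a_{kj}=\Re a_{kj}+i\,\Im a_{kj}$, the real part is handled exactly as in Lemma~\ref{quasiLpkontr Neumann}, while the antisymmetry hypothesis $\Im\left(a_{kj}+a_{jk}\right)=0$ lets one integrate the imaginary part by parts, so that its contribution is absorbed into the first-order terms through the functions $f_k=\sum_j D_j(\Im a_{kj})$, and hence into $m$. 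This is precisely why $\Re a_0$ has to be replaced by $\Re a_0-m$ in the expression \eqref{omega_p Formula} of $\tilde\omega_p$, and why the assumption $f_k\in L^\infty([0,T]\times\Omega)$ is needed for the resulting exponent to be finite.

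The two alternative hypotheses (i) and (ii) are what make the criterion applicable in the complex setting. In case (i), $V=H_0^1(\Omega)$, the cut-off $(|u|\wedge 1)\sgn u$ and the test function $|u|^{p-2}u$ remain in $V$ and there are no boundary contributions, so the integration by parts that produces the $f_k$ is legitimate. In case (ii) the positivity assumption $(\Re u)^+\in V$ together with the coercivity-type inequality $\int_\Omega m(t;x)|u|^2\,\d x\ge c_1\int_\Omega|u|^2\,\d x+c_2\,\Re\mathfrak{R}_V(t;u,u)$ plays the role of compensating both the boundary terms and the drift, so that the lower bound \eqref{Eq2 them: theorem quasilpcontractivity} still holds with the claimed exponent. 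I would then verify that, for each $p$, the resulting $\hat\omega_p(t)$ coincides with the piecewise expression in \eqref{omega_p Formula} after the substitution of $\Re a_0$ by $\Re a_0-m$, exactly as in \cite[Thm.~4.3]{Ouh04} and its complex counterpart.

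The main obstacle is the uniformity step, namely producing a single $L^\infty(0,T)$ exponent that works simultaneously for all frozen times. Concretely, one must check that the pointwise estimate coming from \cite{Ouh04} has constants depending only on the ellipticity constant $\nu$ and on the $L^\infty$-norms of the coefficients and of $f_k$, so that $\hat\omega_p$ is genuinely bounded in $t$; this is where the standing $L^\infty$-hypotheses on the data, and in particular $f_k\in L^\infty([0,T]\times\Omega)$, are used. Once this uniform bound is in hand, Theorem~\ref{theorem quasilpcontractivity} — through the product-integral approximation together with Fatou's lemma, as in its own proof — yields \eqref{EllipticOpUntracontractivityEq} directly.
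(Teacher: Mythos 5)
Your proposal is correct and follows essentially the same route as the paper, whose entire proof of Lemma~\ref{quasiLpkontr general case} is the citation of Theorem~\ref{theorem quasilpcontractivity} together with \cite[Thm.~4.4]{Ouh04}. Your additional remarks --- the reduction to the form inequality via \cite[Thm.~4.1]{Nit12}, the role of $\Im(a_{kj}+a_{jk})=0$ and $f_k\in L^\infty$ in producing the shift $\Re a_0\mapsto \Re a_0-m$, and the check that the frozen-coefficient constants depend only on $\nu$ and the $L^\infty$-norms so that $\hat\omega_p\in L^\infty(0,T)$ --- simply spell out details the paper leaves implicit.
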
 

\begin{proof}
	The assertion follows again from  Theorem \ref{theorem quasilpcontractivity} and \cite[Thm.~4.4]{Ouh04}.
\end{proof}
\begin{remark}\label{remark Linerquasi-contractivityElipOp}If $\fra_V$ fulfills the assumptions of Lemma~\ref{quasiLpkontr Neumann} or those of Lemma~\ref{quasiLpkontr general case}, then we see that the evolution family $\U_V$ is linearly quasi-contractive where \eqref{eq them:quasilpcontractivity C0S-2} is satisfied with 
\begin{eqnarray}\label{$L^p$-quasi-contractive Estimate Eq1}
\alpha_1&=&\|(\Re a_0-m)^{-}\|_\infty+\frac{1}{\nu}\sum_{k=1}^d\|b_k-c_k\|_\infty^2
\end{eqnarray}
and
\begin{eqnarray}
\label{$L^p$-quasi-contractive Estimate Eq2}\alpha_2&=&\frac{1}{\nu}\sum_{k=1}^d\|\Re c_k\|_\infty^2.
\end{eqnarray}
Likewise, the evolution family $\cev{\U^*_V}$ associated with $\cev{\fra^*_V}$ is linearly quasi-contractive and \eqref{eq them:quasilpcontractivity C0S-2} is satisfied with $\alpha_1^*=\alpha_1$ and 
\begin{equation}\label{$L^p$-quasi-contractive Estimate Eq3}
\alpha_2^*=\frac{1}{\nu}\sum_{k=1}^d\|\Re b_k\|_\infty^2.
\end{equation} 
\end{remark}

In view of Remark \eqref{remark Linerquasi-contractivityElipOp}, the following corollary follows directly from Lemma~\ref{quasiLpkontr Neumann} and Lemma~\ref{quasiLpkontr general case}.
\begin{corollary}\label{Corollary main}
Let $\fra$ be given by~\eqref{EllipOper-associated form} and denote by $\U_V$ the associated evolution family on $L^2(\Omega)$. Suppose that $V$ satisfies a Gagliardo--Nirenberg inequality and that the assumptions of Lemma~\ref{quasiLpkontr Neumann} (or those of Lemma~\ref{quasiLpkontr general case}) hold. Then $\U_V$ is ultracontractive and satisfies \ref{2Ultracontractivity for closed form} with $\mu=\nu$ and $\alpha_1=\alpha_1^*,\alpha_2, \alpha_2^*$ defined by \eqref{$L^p$-quasi-contractive Estimate Eq1}-\eqref{$L^p$-quasi-contractive Estimate Eq3}.

\end{corollary}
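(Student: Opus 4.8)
The plan is to reduce everything to a single application of Theorem~\ref{Thm2: Ultracontractivity}. That theorem has exactly two hypotheses: a Gagliardo--Nirenberg inequality for $V$, and linear quasi-contractivity of both $\U_V$ and $\cev{\U_V^*}$. The first is assumed in the statement, so the entire argument consists of exhibiting the linear quasi-contractivity constants and then substituting them, together with the ellipticity constant of $\fra_V$, into the expression for $\bar\omega$ appearing in \eqref{2Ultracontractivity for closed form}.

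First I would record the linear quasi-contractivity of $\U_V$. Under the hypotheses of Lemma~\ref{quasiLpkontr Neumann} (resp.\ Lemma~\ref{quasiLpkontr general case}) the family $\U_V$ is $L^p$-quasi-contractive for every $p\in\,]1,\infty[$ with the rate $\tilde\omega_p$ displayed in \eqref{omega_p Formula}. On the range $p\ge 2$ needed for Definition~\ref{def:linearlylpc-new} one has $\tfrac1p+\tfrac12\le 1$, so the factor in front of $\tfrac1\nu\sum_k\|b_k-c_k\|_\infty^2$ is bounded by $1$ and this term is absorbed into the $p$-independent constant $\alpha_1$; the remaining term $\tfrac p\nu\sum_k\|\Re c_k\|_\infty^2$ is exactly $p\,\alpha_2$. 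Thus $\tilde\omega_p\le\alpha_1+p\alpha_2$ uniformly in $p\ge 2$, which is precisely \eqref{eq them:quasilpcontractivity C0S-2}, with $\alpha_1,\alpha_2$ as in \eqref{$L^p$-quasi-contractive Estimate Eq1}--\eqref{$L^p$-quasi-contractive Estimate Eq2}. This is the content of Remark~\ref{remark Linerquasi-contractivityElipOp}, which I would simply invoke.

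The returned adjoint is handled in the same way. The form $\cev{\fra_V^*}$ is again of the elliptic type \eqref{EllipOper-associated form}, now with time reversed and with the roles of the first-order coefficients $b_k$ and $c_k$ interchanged; it still satisfies \eqref{UniformEllipticityEq} and the structural hypotheses of the relevant lemma, so applying that lemma to $\cev{\fra_V^*}$ yields its linear quasi-contractivity with $\alpha_1^*=\alpha_1$ and $\alpha_2^*=\tfrac1\nu\sum_k\|\Re b_k\|_\infty^2$ as in \eqref{$L^p$-quasi-contractive Estimate Eq3}. With both $\U_V$ and $\cev{\U_V^*}$ linearly quasi-contractive and the Gagliardo--Nirenberg inequality in force, Theorem~\ref{Thm2: Ultracontractivity} applies verbatim, giving ultracontractivity and the bound \eqref{2Ultracontractivity for closed form}; substituting the ellipticity constant of $\fra_V$ and the constants \eqref{$L^p$-quasi-contractive Estimate Eq1}--\eqref{$L^p$-quasi-contractive Estimate Eq3} into $\bar\omega$ produces the stated estimate. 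The only real bookkeeping is the verification that the coefficient swap sending $\fra_V$ to $\cev{\fra_V^*}$ leaves the lemma's hypotheses intact (so that the starred constants are legitimate) and that the $p$-dependence in \eqref{omega_p Formula} genuinely collapses to the affine bound $\alpha_1+p\alpha_2$ on $[2,\infty[$; no new analysis beyond the already-established general theorem is required.
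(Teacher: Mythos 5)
Your proposal is correct and takes essentially the same route as the paper, whose proof consists precisely of invoking Remark~\ref{remark Linerquasi-contractivityElipOp} (linear quasi-contractivity of both $\U_V$ and $\cev{\U_V^*}$, extracted from Lemma~\ref{quasiLpkontr Neumann}, resp.\ Lemma~\ref{quasiLpkontr general case}) and feeding the resulting constants into Theorem~\ref{Thm2: Ultracontractivity}. Your explicit bookkeeping --- checking that $\tilde\omega_p\le\alpha_1+p\alpha_2$ for $p\ge 2$ and that passing to $\cev{\fra_V^*}$ amounts to conjugating, transposing the principal part, and swapping $b_k\leftrightarrow c_k$ (which preserves the lemmas' hypotheses and the ellipticity constant) --- is exactly what the paper leaves implicit.
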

Now we are going to prove that the evolution family $\U_V$ governed by the time-dependent elliptic operator \eqref{EllipOperator} satisfies Gaussian bounds. We known from Theorem \ref{Thm: Gaussian bounds} that $U_V$ satisfies Gaussian bounds if and only if there exist a constants $c>0, n>0$ and $\omega\in \R$ such that 
\begin{equation*}
\|M_\rho U_V(t,s)M_\rho^{-1}\|_{\L(L^1(\R^d),L^\infty(\R^d))}\leq c(t-s)^{-\frac{n}{2}}e^{\omega(1+\rho^2)(t-s)}
\end{equation*}
for all $\rho\in\R, \psi\in W$ and $0\leq s<t\leq T.$ Let $\fra$ given by \eqref{EllipOper-associated form}. Then the non-autonomous form $\fra^{\rho}(t,u,v):=a(t;M_\rho u, M_\rho^{-1} v)$ is given by 
\begin{equation}\label{EllipOper-associated formDavies perturbation}
\begin{split}
\fra_{\rho}(t;u,v)&:=\sum_{k,j=1}^d\int_{\Omega} a_{kj}(t;x)D_ku\overline{D_j v} \ \d x\\
&\qquad+\sum_{k=1}^d\int_{\Omega} \Big[b_{k,\rho}(t;x) D_ku\overline{v} +c_{k,\rho}(t;x)u\overline{D_k v}\Big]\d x + \int_{\Omega} a_{0,\rho}(t;x)u \overline{v}\d x
\end{split}
\end{equation}
where 
\begin{align*}
b_{k,\rho}&:=b_k-\rho\sum_{j=1}^d a_{kj}D_j\psi, 
\qquad c_{k,\rho}:=c_k+\rho\sum_{i=1}^d a_{ik}D_i\psi \quad \text{ and } 
\\ a_{0,\rho}&:=a_0-\rho^2\sum_{i,k=1}^d a_{ik}D_i \psi D_k\psi+\rho \sum_{k=1}^d b_k D_k\psi-\rho \sum_{k=1}^d c_kD_k\psi.
\end{align*}
In the following we define for each $\rho\in \R, \psi\in W$ the constants $\alpha_{i,\rho}, \alpha^*_{i,\rho}, i=1,2,$ via formulas which are analogous to \eqref{$L^p$-quasi-contractive Estimate Eq1}, \eqref{$L^p$-quasi-contractive Estimate Eq2} and \eqref{$L^p$-quasi-contractive Estimate Eq3} where $\Re a_0$ is replaced by $\Re a_0-m$ if $a_{k,j}$ are complex-valued functions. Further, we set \begin{equation}\label{constant1-GaussinaEst}c_0:=\max\{\|a_{k,j}\|_\infty, \|b_k\|_\infty,\|c_k\|_\infty, \|c_0\|_\infty, k,j=1,2,\ldots,d\}\end{equation}
and 
 \begin{equation}\label{constant2-GaussinaEst}\omega:=4c_0d^2+4c_0d^3\nu^{-1}.\end{equation}
 
\begin{lemma}\label{Lemma: linQuaiLpContPerturbedEliOp}
\begin{enumerate}[(a)]
	\item For all $\rho\in \R, \psi\in W$
\begin{align}\label{ellepticity DaviesPerturbationForm}
&\Re a^\rho(t;u,u)+\omega(1+\rho^2)\|u\|^2\geq \frac{\nu}{2}\|u\|^2_V \qquad \text{ for a.e. } t\in[0,T], \text{ and all }u\in V
\end{align}
\item Assume that $f_k\in L^\infty([0,T]\times\Omega), \ \Im\big[a_{k,j}(t,\cdot)+a_{j,k}(t,\cdot)\big]=0$ for all $k,j=1,2,\ldots, d$ and $t\in[0,T].$ Then for all $\rho\in \R, \psi\in W$ we have 
\begin{align}
\alpha_{1,\rho}\label{alpha1}&=\alpha_{1,\rho}^*\leq 2\alpha_1+\rho^2(1+2d^2c_0+4d^3c_0^2\nu^{-1})+c_0d^2\\
\alpha_{2,\rho}&\label{alpha2}\leq 2\alpha_{2}+2d^3\rho^2c_0^2\nu^{-1}
\\ \label{alphareturned2}\alpha_{2,\rho}^*&\leq 2\alpha_{2}^*+2d^3\rho^2c_0^2\nu^{-1}.
\end{align}

\end{enumerate}

\end{lemma}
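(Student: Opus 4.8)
The plan is to reduce both assertions to elementary sup-norm estimates on the explicit coefficients $b_{k,\rho},c_{k,\rho},a_{0,\rho}$ of the conjugated form \eqref{EllipOper-associated formDavies perturbation}, exploiting two structural facts: the Davies conjugation leaves the \emph{principal part} of the form untouched, and $\psi\in W$ forces $\|D_i\psi\|_\infty\le 1$. Together with $\|a_{kj}\|_\infty,\|b_k\|_\infty,\|c_k\|_\infty\le c_0$, these give at once $\|b_{k,\rho}\|_\infty,\|c_{k,\rho}\|_\infty\le c_0(1+d|\rho|)$ and $\|a_{0,\rho}\|_\infty\le c_0(1+d|\rho|)^2$, so every perturbed coefficient is dominated by $c_0$ times a polynomial in $\rho$ of degree at most two.

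For part (a), I would start from the uniform ellipticity \eqref{UniformEllipticityEq} of the unperturbed principal part, which gives $\Re\sum_{k,j=1}^d\int_\Omega a_{kj}(t)D_ku\,\overline{D_ku}\,\d x\ge\nu\sum_{k=1}^d\|D_ku\|_{L^2}^2$ since the $a_{kj}$ are unchanged. The first-order terms of $a^\rho(t;u,u)$ are then absorbed by Young's inequality, the parameter being chosen so that exactly $\tfrac{\nu}{2}\sum_k\|D_ku\|^2$ is retained; the leftover, together with $\|a_{0,\rho}\|_\infty\|u\|^2$, is a multiple of $\|u\|^2$ whose constant is bounded, via $(1+d|\rho|)^2\le 2d^2(1+\rho^2)$, by $\omega(1+\rho^2)$ for $\omega$ as in \eqref{constant2-GaussinaEst}. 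Rearranging gives \eqref{ellepticity DaviesPerturbationForm}. The only point needing care is that the worst growth in $\rho$ is genuinely quadratic — it arises solely from the $\rho^2$ term in $a_{0,\rho}$ and from squaring the linearly-in-$\rho$ first-order coefficients in Young's inequality — which is exactly what permits the $(1+\rho^2)$ weight required by Theorem~\ref{Thm: Gaussian bounds}.

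For part (b) the constants $\alpha_{i,\rho},\alpha_{i,\rho}^*$ are, by definition, those of \eqref{$L^p$-quasi-contractive Estimate Eq1}–\eqref{$L^p$-quasi-contractive Estimate Eq3} with $b_k,c_k,a_0,m$ replaced by the perturbed $b_{k,\rho},c_{k,\rho},a_{0,\rho}$ and the corresponding $m_\rho$, so the task is pure bookkeeping of sup-norms — once one crucial simplification is in place. That simplification, which I expect to be the heart of the matter, is the identity $m_\rho=m$: the hypothesis $f_k\in L^\infty$ makes $m$ a bona fide bounded function, while $\Im[a_{kj}+a_{jk}]=0$ forces $\Im(c_{k,\rho}-b_{k,\rho})=\Im(c_k-b_k)$, because $c_{k,\rho}-b_{k,\rho}=(c_k-b_k)+\rho\sum_j(a_{kj}+a_{jk})D_j\psi$ and the principal coefficients (hence the $f_k$) are unperturbed; thus the drift correction $m$ entering $\alpha_1$ acquires no $\rho$-dependence. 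Granting $m_\rho=m$, estimates \eqref{alpha2} and \eqref{alphareturned2} follow from $\|\Re c_{k,\rho}\|_\infty\le\|\Re c_k\|_\infty+dc_0|\rho|$ (and the analogue for $b_{k,\rho}$) combined with $(x+y)^2\le 2x^2+2y^2$ and summation over $k$. For \eqref{alpha1} I would apply $(x+y)^-\le x^-+|y|$ to $\Re a_{0,\rho}-m=(\Re a_0-m)-\rho^2\Re\sum_{i,k}a_{ik}D_i\psi D_k\psi+\rho\Re\sum_k(b_k-c_k)D_k\psi$ and bound $\|b_{k,\rho}-c_{k,\rho}\|_\infty\le\|b_k-c_k\|_\infty+2dc_0|\rho|$, then regroup using $\tfrac1\nu\sum_k\|b_k-c_k\|_\infty^2\le\alpha_1$ to produce the factor $2\alpha_1$ and $2|\rho|\le1+\rho^2$ to split off the constant from the $\rho^2$ part. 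The precise numerical constants are immaterial for the later application to Gaussian bounds, so I would not optimize them.
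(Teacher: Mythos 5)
Your proposal is correct and follows essentially the same route as the paper's proof: part (a) by absorbing the (linearly-in-$\rho$) perturbed first-order terms via Young's inequality against half the unperturbed ellipticity, so the leftover grows like $(1+d|\rho|)^2\lesssim d^2(1+\rho^2)$, and part (b) by the key observation $m_\rho=m$ (from $\Im(a_{kj}+a_{jk})=0$, since the $\rho$-correction to $c_{k,\rho}-b_{k,\rho}$ is $\rho\sum_j(a_{kj}+a_{jk})D_j\psi$) followed by sup-norm bookkeeping with $(x+y)^2\le 2x^2+2y^2$ and $2|\rho|\le 1+\rho^2$. No gaps; the paper's argument is exactly this computation.
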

\begin{proof}$(a)$ We first show \eqref{ellepticity DaviesPerturbationForm}. Let $k=1,\ldots, d$ and $u\in V$ 
\begin{align*}
\Big|\Re & \Big[b_{k,\rho}(t;x) D_ku\overline{u} +c_{k,\rho}(t;x)u\overline{D_k u}\Big]\Big|
\\&\leq \Big|b_{k}(t;x) D_ku\overline{u} +c_{k}(t;x)u\overline{D_k u}\Big|+|\rho|\Big|\sum_{j=1}^da_{kj}(t;x)D_j\psi D_k u\overline{u}-\sum_{i=1}^d a_{ik}(t;x)D_i\psi u\overline{D_k u}\Big|
\\&\leq 2c_0|D_ku||u|+2d|\rho|c_0|D_ku||u|=2c_0(1+d|\rho|)|D_ku||u|
\\&\leq \frac{\nu}{2}|D_ku|^2+2c_0^2(1+d|\rho|)^2\nu^{-1}|u|^2
\\&\leq \frac{\nu}{2}|D_ku|^2+4c_0^2d^2(1+\rho^2)\nu^{-1}|u|^2,
\end{align*}
 Here we used that $|D_j\psi|<1, i=1,2,\ldots,d,$ the Young inequality and that $d\geq 1.$ Thus we have
\begin{align}\label{ellepticity DaviesPerturbationForm: Eq1}\Big|\Re \sum_{k=1}^d \Big[b_{k,\rho}(t;x) D_ku\overline{v} +c_{k,\rho}(t;x)u\overline{D_k v}\Big]\Big|\leq 
\frac{\nu}{2}\sum_{k=1}^d|D_ku|^2+4c_0^2d^3(1+\rho^2)\nu^{-1}|u|^2
\end{align}
Likewise,
\begin{align}\label{ellepticity DaviesPerturbationForm: Eq2}
\Big|\Re a_{0,\rho}(t;x)u\overline{u}\Big|&
\leq 4c_0d^2(1+\rho^2)|u|^2
\end{align}
Combining \eqref{UniformEllipticityEq}, \eqref{ellepticity DaviesPerturbationForm: Eq1} and \eqref{ellepticity DaviesPerturbationForm: Eq2} yields \eqref{ellepticity DaviesPerturbationForm}.

$(b)$ Using again that $|D_j\psi|<1, i=1,2,\ldots,d,$ one easily prove \eqref{alpha2} and \eqref{alphareturned2}.  Further,  
\begin{align}
\nonumber\frac{1}{\nu}\sum_{k}\|b_{k,\rho}-c_{k,\rho}\|_\infty^2&=\frac{1}{\nu}\sum_{k}\Big[\|b_k-a_k-\rho\sum_{j=1}^d a_{kj}D_j\psi-\rho\sum_{i=1}^d a_{ik}D_i\psi\|_\infty^2\Big]
\\&\nonumber\leq \frac{1}{\nu}\sum_{k}\Big[2\|b_{k,}-a_{k}\|_\infty^2+4d^2\rho^2c_0^2\Big]
\\&\label{alpha1proofEq1}\leq \frac{2}{\nu}\sum_{k}\|b_{k,}-a_{k}\|_\infty^2+\frac{1}{\nu}4d^3\rho^2c_0^2
\end{align}
Since $\Im(a_{k,j}+a_{j,k})=0$ for all $k,j=1,2,\ldots, d,$
 we deduce that $m_\rho=m$ for every $\rho\in\R.$ It follows that 
\begin{align*}
\|(\Re a_{0,\rho}-m_\rho)^{-}\|_\infty&\leq \|(\Re a_{0}-m)^{-}\|_\infty+\|\rho^2\sum_{i,k=1}^d a_{ik}D_i \psi D_k\psi+\rho \sum_{k=1}^d b_k D_k\psi-\rho \sum_{k=1}^d c_kD_k\psi\|_\infty
\\&\leq \|(\Re a_{0}-m)^{-}\|_\infty+\rho^2d^2c_0+\rho^2+c_0d^2.
\end{align*}
This equality together with \eqref{alpha1proofEq1} prove \eqref{alpha1}.
\end{proof}

Combining Theorem~\ref{Main 2 TheoremGaussianEstimate} with Lemma \ref{Lemma: linQuaiLpContPerturbedEliOp} and Corollary \ref{Corollary main} we can finally prove  Gaussian bounds for evolution families associated with families of uniform elliptic operators of the form \eqref{EllipOperator}.

\begin{theorem}\label{Main TheoremGaussianEstimateEllipOp} Let $V$ be $W-$invariant and satisfy \eqref{Gagliardo--Nirenberg inequality}. If the assumptions of Lemma~\ref{quasiLpkontr Neumann} or those of Lemma \ref{quasiLpkontr general case} are satisfied, then $\U_V$ satisfies Gaussian bounds. More precisely we have
	 \begin{equation}
	\label{eq2: pseudoGaussian boundsEllOp}
	(U_V(t,s)f)(x)=\int_{\R^d} \Gamma_V(t,s,x,y)f(y)d y\quad
	\end{equation}
	where 
		\begin{equation}
	\label{eq: pseudoGaussian boundsEllOp}
	|\Gamma_V(t,s,x,y)|\displaystyle\leq c_0e^{\omega_0 (t-s)}(t-s)^{-\frac{n}{2}} \exp\Big(-\frac{\beta_0}{4}\frac{|x-y|^2}{t-s}\Big)
	\end{equation}
for a.e.\ $x\in\R^d$, all $(t,s)\in \Delta$, and all $f\in L^2(\R^d)$, where 
$c, \omega_0, n$ and $\beta_0$ are positive constants that depend only on $C_G, N, d,\nu, c_0$ and on the constant $\beta$ defined in \eqref{eq Definition constant beta}.
\end{theorem}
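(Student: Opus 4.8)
The plan is to verify the hypotheses of Theorem~\ref{Main 2 TheoremGaussianEstimate} for the form $\fra_V$ and, in particular, to establish the crucial inequality~\eqref{eq:omegarhoomega0}; the Gaussian bounds then follow at once. The $W$-invariance of $V$ and the Gagliardo--Nirenberg inequality are assumed outright. The key structural observation is that the Davies-perturbed form $\fra^\rho$ admits the explicit representation~\eqref{EllipOper-associated formDavies perturbation}: it is again an elliptic form of the type~\eqref{EllipOper-associated form}, sharing the \emph{same} principal coefficients $a_{kj}$ and differing only in its lower-order terms $b_{k,\rho},c_{k,\rho},a_{0,\rho}$. Hence each $\fra^\rho$ lies in $\Formm([0,T];V,H)$ in its own right, and the structural hypotheses of Lemma~\ref{quasiLpkontr Neumann} or Lemma~\ref{quasiLpkontr general case}---which constrain only the $a_{kj}$ (real-valued, resp.\ $\Im(a_{kj}+a_{jk})=0$)---are inherited by the perturbed forms.

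First I would invoke Lemma~\ref{Lemma: linQuaiLpContPerturbedEliOp}(a) to supply the uniform $H$-ellipticity~\eqref{ellepticity DaviesPerturbationForm} of the family $(\fra^\rho)_\rho$ with the \emph{fixed} coercivity constant $\alpha=\nu/2$ and with $\omega_\rho=\omega(1+\rho^2)$, where $\omega$ is given by~\eqref{constant2-GaussinaEst}; this is exactly the uniform-$\alpha$ requirement of Theorem~\ref{Main 2 TheoremGaussianEstimate}. Next, by Remark~\ref{remark Linerquasi-contractivityElipOp} both $\U_\rho$ and $\cev{{\U_\rho}^*}$ are linearly quasi-contractive, with constants $\alpha_{\rho,i},\alpha_{\rho,i}^*$ obtained by applying the formulas~\eqref{$L^p$-quasi-contractive Estimate Eq1}--\eqref{$L^p$-quasi-contractive Estimate Eq3} to the perturbed coefficients. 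With this, all hypotheses of Theorem~\ref{Main 2 TheoremGaussianEstimate} except~\eqref{eq:omegarhoomega0} are in place, and Corollary~\ref{Corollary main} already guarantees ultracontractivity of each $\U_\rho$ with the explicit rate.

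The crux is then the verification of~\eqref{eq:omegarhoomega0}. Here I would feed the bounds of Lemma~\ref{Lemma: linQuaiLpContPerturbedEliOp}(b) into the definition of $\tilde\omega_\rho$: the estimates~\eqref{alpha1}--\eqref{alphareturned2} show that each of $\alpha_{\rho,1},\alpha_{\rho,1}^*,\alpha_{\rho,2},\alpha_{\rho,2}^*$ is bounded by an affine function of $\rho^2$, with coefficients depending only on $d,\nu,c_0$ and the unperturbed constants. Combining these with $\omega_\rho=\omega(1+\rho^2)$ yields $\tilde\omega_\rho\le \omega_0(1+\rho^2)$ for a single constant $\omega_0>0$ independent of $\rho$ and of $\psi\in W$, which is precisely~\eqref{eq:omegarhoomega0}. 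Theorem~\ref{Main 2 TheoremGaussianEstimate} thus applies and $\U_V$ satisfies Gaussian bounds. The quantitative kernel estimate~\eqref{eq: pseudoGaussian boundsEllOp} then comes from the explicit constants in Theorem~\ref{Thm: Gaussian bounds}, where the exponent constant is $b=\beta^2/(4\omega_0)$ in terms of the equivalence constant $\beta$ of~\eqref{eq Definition constant beta} (so that $\beta_0=\beta^2/\omega_0$); tracing the remaining constants back through Corollary~\ref{Corollary main} fixes $c_0,\omega_0,n,\beta_0$.

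Since all the delicate analytic estimates---the uniform coercivity and the quadratic-in-$\rho$ control of the quasi-contractivity constants---have already been isolated in Lemma~\ref{Lemma: linQuaiLpContPerturbedEliOp}, the proof itself is essentially a bookkeeping assembly. I expect the only genuinely subtle point to be the check that \emph{no} term in $\tilde\omega_\rho$ grows faster than $\rho^2$: the Davies perturbation injects the drift corrections $\rho\sum_j a_{kj}D_j\psi$ into $b_{k,\rho},c_{k,\rho}$ and a term quadratic in $\rho$ into $a_{0,\rho}$, and it is only because the $|D_j\psi|$ are uniformly bounded---so that the choice of $\psi\in W$ contributes no $\rho$-dependence beyond the explicit powers---that the total stays $O(\rho^2)$. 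This uniformity in $\psi$, built into the definition~\eqref{W-Davies Space} of $W$, is exactly what makes Davies' trick close.
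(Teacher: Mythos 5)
Your proposal is correct and follows essentially the same route as the paper, which obtains the theorem precisely by combining Theorem~\ref{Main 2 TheoremGaussianEstimate} with Lemma~\ref{Lemma: linQuaiLpContPerturbedEliOp} and Corollary~\ref{Corollary main}: uniform coercivity with $\alpha=\nu/2$, the bound $\omega_\rho\le\omega(1+\rho^2)$, and the affine-in-$\rho^2$ control of $\alpha_{\rho,i},\alpha_{\rho,i}^*$ from~\eqref{alpha1}--\eqref{alphareturned2}, yielding~\eqref{eq:omegarhoomega0} and hence the kernel bound with $b=\beta^2/(4\omega_0)$ via Theorem~\ref{Thm: Gaussian bounds}. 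Your explicit check that the Davies perturbation leaves the principal coefficients $a_{kj}$ untouched (so the hypotheses of Lemma~\ref{quasiLpkontr Neumann} or Lemma~\ref{quasiLpkontr general case} are inherited by $\fra^\rho$, with $m_\rho=m$) is exactly the step the paper leaves implicit.
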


\bibliographystyle{unsrt}
\bibliography{../../referenzen/literatur}

\end{document}